\definecolor{aleacolor}{rgb}{0.16,0.59,0.78}
\renewcommand{\cite}{\citet}
\theoremstyle{plain}
\newtheorem{theorem}{Theorem}[section]
\newtheorem{proposition}[theorem]{Proposition}
\newtheorem{lemma}[theorem]{Lemma}
\newtheorem{corollary}[theorem]{Corollary}
\theoremstyle{definition}
\newtheorem{definition}[theorem]{Definition}
\theoremstyle{remark}
\newtheorem{remark}[theorem]{Remark}
\makeatletter \@addtoreset{equation}{section} \makeatother
\newcommand{\Prob} {{\bf P}}
\newcommand{\E}{{\bf E}}
\newcommand{\R}{{\mathbb{R}}}
\newcommand{\dist}{{\rm dist}}
\newcommand{\w}{{\overline{w}}}
\newcommand{\q}{\tilde{q}}
\def \p {\partial}
\def \state {{\mathcal X}}
\def \og {\overline{\gamma}}
\def \ogp {\overline{\gamma}'}
\def \good{{\sf Good}}
\def \nice{{\sf Nice}}
\def \ball{{\mathcal B}}
\def \sep{{\sf Sep}}
\def \wiener {{\mathcal W}}
\def \curves {{\mathcal C}}
\def \z {{\overline{z}}}
\def \K {{\overline{K}}}
\def \F {{\mathcal F}}
\begin{document}

\title[Fast convergence of non-intersecting Brownian paths]{Fast convergence to an invariant measure for non-intersecting
3-dimensional Brownian paths}

\author{Gregory F. Lawler}
\author{Brigitta Vermesi}

\address{University of Chicago, Department of Mathematics, 5734 S. University Avenue, Chicago
IL 60637}
\email{lawler@math.uchicago.edu}

\address{DigiPen Institute of Technology, 9931 Willows Rd NE, Redmond WA 98052 }
\email{bvermesi@gmail.com}

\thanks{GFL: Research supported by National Science Foundation grant
DMS-0907143.}
\thanks{BV: Research in part supported by NSF Supplemental Funding
DMS-0439872 to UCLA-IPAM, P.I. R. Caflisch}

\subjclass[2000]{60J65} \keywords{Brownian motion, intersection
exponent, invariant measure, convergence to stationarity}

\begin{abstract}
We consider pairs of 3-dimensional Brownian paths, started at
the origin and conditioned to have no intersections after time
zero. We show that there exists a unique measure on pairs of
paths that is invariant under this conditioning, while improving
the rate of convergence to stationarity from \cite{StrictConc}.
\end{abstract}

\maketitle

\section{Introduction}

Suppose $W_t^1, W_t^2$ are independent Brownian motions taking
values in $\R^3,$ starting at different points.  It is well
known \cite{DEK} that
\[  \lim_{t \rightarrow \infty}
    \Prob\{W^1[0,t] \cap W^2[0,t] = \emptyset \}
   = 0 , \]
and, from this, one can conclude that the paths of the Brownian
motions almost surely have double points. Using a subadditivity
argument \cite{BL,BLP}, one can show that there exists a $\xi,$
called the {\em($3$-dimensional) Brownian intersection
exponent}, such that
\[     \Prob\{W^1[0,t^2] \cap W^2[0,t^2] = \emptyset \}
   \approx t^{-\xi}, \;\;\;\; t \rightarrow \infty , \]
where $\approx$ indicates that the logarithms of both sides are
asymptotic. The value of $\xi$ is not known exactly. Rigorous
estimates \cite{BL,Lcut} show that $.5 < \xi <1$ and previous
numerical simulations \cite{BLP} suggest a value of
approximately $.58$. If we define the set of cut points for
$W^1$ to be
\[  \{W_s^1: W^1[0,s) \cap W^1[s,\infty) = \emptyset\} , \]
then it was proved in \cite{Lcut} that, with probability one,
the Hausdorff dimension of the set of cut points is $2-\xi$.

To understand the behavior of a Brownian path $W$ near a typical
cut point, one is led to study the distribution of $W_t,$ when $0
\leq t \leq 2,$ given that $W_1$ is a cut point. This
conditioning is on an event of probability zero, and in order to
make this conditioning precise, one needs to take a limit, e.g,
one can condition on $W[0,1-\epsilon] \cap W[1+\epsilon,2] =
 \emptyset$ and then take the limit as $\epsilon \rightarrow
0.$ Equivalently, by translating so that $W_1$ is the origin and
using $W^1, W^2$ to denote the ``past'' and the ``future'' of
the walk, we can consider the measure on pairs of paths $(W_t^1,
W_t^2),$ when $0 \leq t \leq 1,$ conditioned so that
$W_t^1[\epsilon,1] \cap W_t^2[\epsilon,1] = \emptyset$. A
similar limit, where $\epsilon$ is replaced with the first visit
to the sphere of radius $\epsilon,$ was studied in
\cite{StrictConc} for dimensions $2$ and $3$ and \cite{Linvar}
for dimension $2$, where it was shown that there exists a unique
limit distribution which can be considered an invariant (or, as
sometimes called, quasi-invariant) measure for the
nonintersecting paths.

In this paper, we will reprove the result in \cite{StrictConc},
making an important improvement in the rate of convergence to
the invariant measure. More precisely, our proof gives an
exponential rate of convergence. The reason for establishing
this result is not just to make an improvement of a result in
the literature. We hope  to extend these ideas to the more
general intersection exponents. See Section \ref{futsec} for a
discussion of some goals for this program of research.  The
final section summarizes the results of some simulations we have
done for the exponent.

\section{Main result}

\subsection{Preliminaries}

Throughout this paper, $W_t, W_t^1, W_t^2$ will denote
standard Brownian motions taking values in $\R^3$.
 We write elements
 of $\R^3$ as $w, w_1, w_2,\ldots$ and we use $\w = (w_1,w_2)$
for ordered pairs of points in $\R^3$. Let $\ball_n$ denote the
open ball of radius $e^{n}$ about the origin and let $\ball =
\ball_0$. Although the notation $n$ suggests integer values,
unless specified otherwise, $n$ can take on real values. We
write $\p\ball^2$ for $(\p \ball)^2$, the space of couples of
points from $\p \ball$. Let
\[ T_n = \inf\{t: W_t \in \p \ball_n \}, \]
and define $T_n^1, T_n^2$ similarly.

We state, without proof, some standard facts about
Brownian motion.

\begin{lemma}[Gambler's ruin estimate]
\label{lemma.jan.1} Let $V_{a} = \{(x,y,z) \in \R^3: x= a\}$ and
suppose $w = (1,y,z)$. For $n \geq 1$, let $\tau_n$ be the first
time $t$ that a Brownian motion $W_t$ visits $V_0 \cup V_n$.
Then
\[\Prob^w\{W_{\tau_n} \in V_n\} = 1/n.\]
\end{lemma}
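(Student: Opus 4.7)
The plan is to reduce the problem to one dimension and then apply the optional stopping theorem to the bounded martingale given by the first coordinate.

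First I would observe that the hitting criterion depends only on the first coordinate of the Brownian motion. Writing $B_t = (X_t, Y_t, Z_t)$, each component is an independent standard 1-dimensional Brownian motion, and $B_t \in V_0 \cup V_n$ if and only if $X_t \in \{0, n\}$. So if $\sigma_n = \inf\{t : X_t \in \{0,n\}\}$ denotes the first exit of $X_t$ from the interval $(0,n)$, then $\tau_n = \sigma_n$ almost surely under $\Prob^w$ with $w = (1,y,z)$, where $X_0 = 1$.

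Next I would argue that $\sigma_n < \infty$ almost surely. This follows from the fact that 1-dimensional Brownian motion is recurrent, or more concretely from the fact that $\E[\sigma_n] < \infty$ (which can be seen by comparing with the exit time of $(0,n)$ whose mean is $x(n-x)$ for start point $x \in (0,n)$, via the harmonic function $u(x) = x(n-x)$ satisfying $\frac{1}{2}u'' = -1$).

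Now I would apply optional stopping. The process $X_t$ is a continuous martingale, and the stopped process $X_{t \wedge \sigma_n}$ takes values in $[0,n]$, hence is uniformly bounded. By the optional stopping theorem,
\[ 1 = X_0 = \E^w[X_{\sigma_n}] = 0 \cdot \Prob^w\{X_{\sigma_n} = 0\} + n \cdot \Prob^w\{X_{\sigma_n} = n\}. \]
Therefore $\Prob^w\{X_{\sigma_n} = n\} = 1/n$, which is exactly $\Prob^w\{B_{\tau_n} \in V_n\} = 1/n$.

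There is really no main obstacle in this proof; every step is a classical one-line verification. The only thing worth being careful about is the almost-sure finiteness of $\sigma_n$, so that the identity $\E^w[X_{\sigma_n}] = 1$ is not vacuous, but this is immediate from recurrence of 1-dimensional Brownian motion.
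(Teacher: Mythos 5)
Your proof is correct; the paper states this lemma without proof as a standard fact about Brownian motion, and your argument (projecting onto the first coordinate and applying optional stopping to the bounded stopped martingale $X_{t\wedge\sigma_n}$) is exactly the classical one-line verification the authors have in mind.
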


\begin{lemma}[Harnack inequality] If $U \subset \R^3$
is open and connected
 and $K \subset U$ is compact, then there exists
$c = c(K,U) < \infty$ such that if $f: U \rightarrow (0,\infty)$
is harmonic, then $f(w_1) \leq c \, f( w_2)$ for all $w_1,w_2
\in K$.
\end{lemma}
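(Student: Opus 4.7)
The plan is to follow the classical two-step proof: first establish a local Harnack inequality on a single Euclidean ball, then chain it along paths in $U$ to promote this to a uniform bound on $K$. For the local step, I would fix any ball with $\overline{B(x_0, R)} \subset U$ and use the Poisson integral representation
\[
f(y) = \frac{R^2 - |y-x_0|^2}{4\pi R} \int_{\p B(x_0,R)} \frac{f(\xi)}{|y - \xi|^3} \, d\sigma(\xi).
\]
Restricting $y$ to $B(x_0, R/2)$ gives $R/2 \leq |y - \xi| \leq 3R/2$ and $3R^2/4 \leq R^2 - |y - x_0|^2 \leq R^2$, so the Poisson kernel is pinched between two absolute multiples of $R^{-2}$. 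Integrating against the nonnegative boundary values of $f$ then yields $f(y) \leq C_0 \, f(z)$ for all $y, z \in B(x_0, R/2)$, with $C_0$ an absolute dimensional constant.

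For the global step, set $r = \dist(K, \p U)$, which is strictly positive since $K$ is compact and disjoint from $\p U$ (the case $U = \R^3$ being trivial by Liouville's theorem). Using path-connectedness of $U$, I would manufacture a bounded, connected open set $V$ with $K \subset V \subset \overline V \subset U$: cover $K$ by finitely many balls of radius $r/4$ lying in $U$, join their centers pairwise by polygonal paths in $U$, and thicken the resulting compact set by a small amount. Let $r' = \dist(\overline V, \p U) > 0$ and cover $\overline V$ by finitely many balls $B(y_j, r'/4)$, $j = 1, \ldots, N$. Each doubled ball $B(y_j, r'/2)$ lies in $U$, so the local Harnack inequality applies on it with the same constant $C_0$. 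Because $V$ is connected and the cover is finite, some integer $M = M(K, U)$ bounds the length of the shortest chain of overlapping cover balls connecting any two of the $y_j$. Iterating the local bound along such a chain gives $f(w_1) \leq C_0^M \, f(w_2)$ for every $w_1, w_2 \in K$, so $c := C_0^M$ works.

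The main obstacle is the geometric bookkeeping in the chaining step: producing $V$ and verifying that the chain length $M$ can be taken uniformly over $K \times K$ rather than depending on the particular pair $(w_1, w_2)$. The analytic content of Step~1 is only the Poisson kernel estimate; the real work is topological, upgrading a pointwise inequality on a ball to one uniform across a compact subset of an arbitrary connected open set.
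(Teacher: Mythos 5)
Your proof is correct. The paper states this Harnack inequality without proof, as one of several standard facts, so there is nothing internal to compare against; your argument --- the local estimate obtained by pinching the Poisson kernel of a ball $B(x_0,R)$ on the half-ball $B(x_0,R/2)$, followed by a Harnack chain over a finite cover of a compact connected neighborhood $\overline V$ of $K$ inside $U$ (with the $U=\R^3$ case dispatched separately since positive harmonic functions on all of $\R^3$ are constant) --- is precisely the classical proof, and both the kernel estimates and the uniformity of the chain length $M$ over pairs in $K$ are handled correctly.
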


\begin{lemma}
If $w \in \p \ball$ and $k > 0$, then
\begin{equation}  \label{jan14.3}
 \Prob^w\{W[0,\infty) \cap \p \ball_{-k}
 \neq \emptyset \} = e^{-k}.
\end{equation}
\end{lemma}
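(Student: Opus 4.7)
The claim is the standard fact that in three dimensions the probability a Brownian motion started on the unit sphere ever reaches a smaller concentric sphere of radius $e^{-k}$ equals $e^{-k}$. There are two clean ways to prove it, and I would present the one based on the harmonic measure/transience argument since it fits the style of the other quoted lemmas.

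First, let $h(x) = \Prob^x\{B[0,\infty) \cap \p\ball_{-k} \neq \emptyset\}$ for $|x| \geq e^{-k}$. By the strong Markov property and the standard argument that hitting probabilities solve the Dirichlet problem, $h$ is harmonic on $\{x : |x| > e^{-k}\}$, continuous up to the boundary with $h \equiv 1$ on $\p\ball_{-k}$. Because three-dimensional Brownian motion is transient, $|B_t| \to \infty$ a.s., which gives $h(x) \to 0$ as $|x| \to \infty$. By rotational invariance of Brownian motion, $h(x)$ depends only on $|x|$, so $h(x) = g(|x|)$ for some $g$.

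Next, I would invoke the classical fact that the radially symmetric harmonic functions on $\R^3 \setminus \{0\}$ are exactly those of the form $g(r) = A + B/r$. (This is a one-line ODE: writing the Laplacian in radial coordinates gives $(r^2 g')' = 0$.) The two boundary conditions $g(e^{-k}) = 1$ and $g(r) \to 0$ as $r \to \infty$ force $A = 0$ and $B = e^{-k}$, so $h(x) = e^{-k}/|x|$. Evaluating at $w \in \p\ball$ yields the desired identity (\ref{jan14.3}).

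As an alternative, and the step I would actually expect to be the only real obstacle, one can avoid the PDE characterization entirely by optional stopping: $1/|B_t|$ is a bounded martingale between times $\sigma = \inf\{t : B_t \in \p\ball_{-k}\}$ and $\tau_n = \inf\{t: B_t \in \p\ball_n\}$ for $n$ large. Applying optional stopping at $\sigma \wedge \tau_n$ gives
\[
1 \;=\; \frac{1}{|w|} \;=\; \E^w\!\left[\frac{1}{|B_{\sigma \wedge \tau_n}|}\right] \;=\; e^{k}\,\Prob^w\{\sigma < \tau_n\} + e^{-n}\,\Prob^w\{\tau_n < \sigma\}.
\]
The only thing that requires care is justifying the limit as $n \to \infty$: one needs $\Prob^w\{\sigma < \tau_n\} \to \Prob^w\{\sigma < \infty\}$, which follows from transience (so $\tau_n < \infty$ a.s.\ for each $n$ and $\tau_n \to \infty$), together with $e^{-n}\Prob^w\{\tau_n<\sigma\} \to 0$. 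Sending $n \to \infty$ gives $e^{k} \Prob^w\{\sigma < \infty\} = 1$, i.e.\ $\Prob^w\{\sigma < \infty\} = e^{-k}$, which is exactly (\ref{jan14.3}).
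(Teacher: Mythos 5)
Your argument is correct. Note that the paper offers no proof at all here: this lemma appears in the list of ``standard facts about Brownian motion'' stated without proof, so there is nothing in-paper to compare against. Either of your two routes fills the gap legitimately: the Dirichlet-problem route (radial harmonic functions on $\R^3\setminus\{0\}$ are $A+B/r$, with $A=0$ forced by transience and $B=e^{-k}$ by the boundary condition on $\p\ball_{-k}$) and the optional-stopping route (the process $1/|B_{t\wedge\sigma\wedge\tau_n}|$ is a bounded martingale since $1/|x|$ is harmonic in the annulus $e^{-k}<|x|<e^n$, and your limiting argument using $\tau_n\to\infty$ a.s.\ and $e^{-n}\Prob^w\{\tau_n<\sigma\}\to 0$ is exactly the justification needed). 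Both yield $\Prob^w\{\sigma<\infty\}=e^{-k}/|w|=e^{-k}$ for $w\in\p\ball$, which is the statement \eqref{jan14.3}.
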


\begin{lemma}[Cone estimate] \label{conelemma} Suppose $U$ is a
(relatively)
open subset of $\p \ball$ containing
$w= (1,0,0)$.  Let $O$ denote the corresponding
cone
\[         O = \{rw: r > 0, w \in U\}. \]
Then there exist $0 < c, \alpha<\infty$, depending on $U,$ such
that for all positive integers $n$
\begin{equation}  \label{jan13.1}
   \Prob^w\{W[0,T_n] \subset O\} \geq
   c e^{-n \alpha}.
\end{equation}
\end{lemma}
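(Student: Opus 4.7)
The plan is to iterate a one-shell estimate using the scale invariance of Brownian motion together with the scale invariance of the cone $O$. First I would fix a smaller relatively open set $U' \subset U$ with $w \in U'$ and $\overline{U'}$ compact in $U$, and set $K = \overline{U'} \subset \p\ball$. The one-shell claim is that there exists $q > 0$ such that for every $v \in K$,
\[
\Prob^v\{B[0,T_1] \subset O,\ e^{-1} B_{T_1} \in K\} \geq q.
\]
Once this is established, Brownian scaling does the rest.

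To prove positivity of $q$ uniformly on $K$ -- which I expect to be the only real obstacle -- I would exploit that for each $v \in K$, the radial segment from $v$ to $ev$ lies inside $O$, and since $K$ is compact in the relatively open set $\p\ball \cap U$, one can choose a single $\delta > 0$ such that the $\delta$-tube around each such segment is contained in $O \cap (\ball_1 \setminus \ball_{-\delta})$, and its $e$-dilation-endpoint lies in $e \cdot K$. A standard tube estimate for Brownian motion (path following the segment with displacement controlled by the modulus of continuity) gives a strictly positive lower bound depending only on $\delta$, uniformly in $v \in K$. Alternatively one can use the Harnack inequality applied to the positive harmonic function $v \mapsto \Prob^v\{B[0, T_{O \cap \ball_1}] \text{ exits through } \p \ball_1 \cap eK\}$ on $O \cap \ball_1$, together with continuity up to the regular boundary piece, to obtain the same uniform lower bound.

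With the one-shell estimate in hand, I would iterate via the strong Markov property at the stopping times $T_1, T_2, \ldots, T_{n-1}$. Brownian scaling says that for $v \in \p\ball_k$ with $e^{-k}v \in K$, the process $\tilde B_t := e^{-k} B_{e^{2k} t}$ is a Brownian motion started at $e^{-k}v \in K$, and the cone $O$ is preserved under this scaling; hence
\[
\Prob^v\{B[T_k,T_{k+1}] \subset O,\ e^{-(k+1)} B_{T_{k+1}} \in K\} \geq q.
\]
Since $w \in K$, iterating this estimate over the $n$ shells $\ball_{k} \setminus \ball_{k-1}$ for $k=1,\ldots,n$ yields
\[
\Prob^w\{B[0, T_n] \subset O\} \geq q^n,
\]
which gives (\ref{jan13.1}) with $c = 1$ and $\alpha = -\log q > 0$.
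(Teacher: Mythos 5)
The paper states Lemma \ref{conelemma} without proof, listing it among the standard Brownian motion facts (the remark after it only points to \cite{LawBud} for the stronger two-sided bound), so there is no in-paper argument to compare with; your task was to supply the standard proof, and your proposal does so correctly. The skeleton --- fix a compact sub-cap $K=\overline{U'}\subset U$ containing $w$, prove a one-shell estimate $\inf_{v\in K}\Prob^v\{B[0,T_1]\subset O,\ e^{-1}B_{T_1}\in K\}\ge q>0$, then iterate via the strong Markov property at $T_1,\dots,T_{n-1}$ using the dilation invariance of the cone --- is exactly the right argument and yields \eqref{jan13.1} with $c=1$ and $\alpha=\log(1/q)$. (Your phrase about iterating ``over the $n$ shells'' is harmless: the event only requires the path to stay in $O$ between successive hitting times, not to stay in the shell, and that is what your iteration actually uses.)

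One detail in the tube version of the one-shell bound is imprecise: if $v$ lies on the boundary of $U'$, the $\delta$-disc of $\p\ball_1$ around $ev$ is contained in $e\,U$ (for $\delta$ small, by compactness of $K$ in $U$) but need not be contained in $e\,K$, so confining the path to the tube does not by itself force $e^{-1}B_{T_1}\in K$, which is what the next iteration step needs. Your second, Harnack-based route repairs this cleanly and is the one to keep: take $U'$ to be a connected cap around $w$, let $h(x)$ be the probability that Brownian motion exits the (connected component of the) open set $O\cap\ball_1$ containing $K$ through $\p\ball_1\cap e\overline{U'}$; this $h$ is a positive harmonic function there (positivity from a tube argument near the target patch plus the strong maximum principle), on the exit event one indeed has $B[0,T_1]\subset O$ and $e^{-1}B_{T_1}\in K$, and the Harnack inequality (Lemma 2.2 of the paper) over the compact $K$ gives the uniform $q>0$. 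With that substitution the proof is complete.
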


\begin{remark}  One can further show that
\[   \Prob^w\{W[0,T_n] \subset O\}  \asymp \, e^{-\alpha n} \]
for some $\alpha < \infty$, where $\asymp$ means ''within
multiplicative constants of". One way to do this is to follow an
argument similar to (but easier than) the argument in this
paper.  See \cite{LawBud}.  We will not need this stronger
result.
\end{remark}

If $W_t$ is started at $|w| < 1$, then the density of $W_{T_0}$
with respect to surface measure is given by the Poisson kernel
\[         H(w,z) = c\, \frac{1-|w|^2}{|w-z|^3}, \;\;\;
  |w| < 1, |z| = 1.\]
Using this, we easily conclude the following.

\begin{lemma}  There exists $c < \infty$ such that if
$r \leq 1$ and $|w_1|, |w_2| \leq r$, then we can define
standard Brownian motions $W_t^1, W_t^2$ on the same probability
space such that $W_0^1 = w_1, W_0^2 = w_2$ and
\[    \Prob\left\{W^1_{T_0^1} = W_{T^2_0}^2 \right\}
      \geq 1-c \, r.\]
\end{lemma}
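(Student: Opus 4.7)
The plan is to build the coupling by matching the exit distributions of $B^1$ and $B^2$ from $\ball$, using the explicit Poisson kernel formula to control how far apart those two distributions are in total variation.

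First I would recall the standard maximal coupling statement: given two probability measures $\mu_1,\mu_2$ on the sphere $\p\ball$ with densities $h_1,h_2$ with respect to surface measure, there exists a coupling $(X_1,X_2)$ with marginals $\mu_i$ such that
\[ \Prob\{X_1 = X_2\} \;=\; \int_{\p\ball} \min\{h_1(z),h_2(z)\}\,dz \;=\; 1 - \tfrac{1}{2}\int_{\p\ball}|h_1(z)-h_2(z)|\,dz.\]
Starting Brownian motions from $w_1$ and $w_2$, the exit distributions $B^i_{T_0^i}$ have densities $h_i(z)=H(w_i,z)$. Once we realize the endpoints through the maximal coupling, we can (conditionally on the endpoints) run independent Brownian bridges from $w_i$ to the prescribed endpoint to fill in the paths; these are bona fide Brownian motions with the right initial points. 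So it suffices to prove
\[ \int_{\p\ball}|H(w_1,z)-H(w_2,z)|\,dz \;\leq\; c\,r. \]

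Next I would prove this TV estimate from the explicit formula $H(w,z)=c(1-|w|^2)/|w-z|^3$. It is enough to handle $r\leq 1/2$, since for $r \in (1/2,1]$ the claimed bound $1-cr$ is trivially satisfied by choosing $c\geq 2$. When $r\leq 1/2$ and $|z|=1$, one has $|w_i-z|\in[1/2,3/2]$ and $1-|w_i|^2\in[3/4,1]$, so both $H(w_i,z)$ and their derivatives in $w$ are bounded above by an absolute constant on $\{|w|\leq r\}\times \p\ball$. Writing $H(w_1,z)-H(w_2,z)$ as a sum of two terms, using
\[ (1-|w_1|^2)-(1-|w_2|^2) \;=\; (|w_2|-|w_1|)(|w_1|+|w_2|) \;=\; O(r)\,|w_1-w_2|,\]
and
\[ |w_1-z|^{-3}-|w_2-z|^{-3} \;=\; O(1)\cdot\bigl(|w_1-z|^2-|w_2-z|^2\bigr) \;=\; O(|w_1-w_2|),\]
one gets $|H(w_1,z)-H(w_2,z)|\leq C\,r$ uniformly in $z\in\p\ball$. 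Since $\p\ball$ has finite surface measure, integration gives the required bound.

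The main (and only real) obstacle is the pointwise estimate on the difference of Poisson kernels; once that is in hand, the maximal coupling and a bridge argument assemble the Brownian motions. The argument uses nothing beyond elementary calculus because $w_1,w_2$ stay bounded away from the boundary sphere, so there are no singularities in $H$ to worry about.
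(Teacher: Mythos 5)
Your argument is correct and follows exactly the route the paper intends: the paper omits the proof, saying only that it follows easily from the explicit Poisson kernel, and your combination of the pointwise (Lipschitz) comparison of $H(w_1,\cdot)$ and $H(w_2,\cdot)$ for $|w_i|\le r\le 1/2$, maximal coupling of the two exit distributions, and conditional ($h$-process) reconstruction of the paths is precisely that easy argument made explicit. The only cosmetic point is that the conditioned path only gives the Brownian motion up to $T_0$, so one should attach an independent continuation after the exit time (and note the trivial case $r>1/2$ by taking $c\ge 2$, as you do).
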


Slightly more generally, using maximal coupling (see
\citealp{Lindvall}), we have the following result.
%using ``maximal coupling''.

\begin{lemma} [Coupling] There exists $c < \infty$ such that the
following holds.  Suppose $w_1,w_2 \in \p \ball$.  Then we can
find a probability space on which we can define
  $W_t^1, W_t^2$,  Brownian motions
with $W_0^j = w_j$, such that for all $n \geq 0$,
\[     \Prob\left\{ W_{t + T_n^1}^1
 =  W_{t + T_n^2}^2  \mbox{ for all }
  t \geq 0 \right\} \geq 1 - c\,e^{-n}. \]
\end{lemma}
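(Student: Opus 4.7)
The plan is to reduce the statement to the preceding one-time coupling lemma via Brownian scaling, and then propagate the coupling forward in time using the strong Markov property. First, I would apply that lemma to Brownian motions $\tilde B^1, \tilde B^2$ started at $e^{-n}w_1, e^{-n}w_2$, which lie in the ball of radius $r := e^{-n} \leq 1$. The previous lemma then furnishes a joint realization with
\[ \Prob\{\tilde B^1_{\tilde T_0^1} = \tilde B^2_{\tilde T_0^2}\} \geq 1 - c e^{-n}. \]
Undoing the scaling through $B^j_t := e^n \tilde B^j_{e^{-2n}t}$ produces Brownian motions from $w_1, w_2$ satisfying $T_n^j = e^{2n}\tilde T_0^j$ and $B^j_{T_n^j} = e^n \tilde B^j_{\tilde T_0^j}$, so on the same probability space
\[ \Prob\{B^1_{T_n^1} = B^2_{T_n^2}\} \geq 1 - c e^{-n}. \]

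Second, I would extend this single-time agreement to agreement for all later times. On the event $A := \{B^1_{T_n^1} = B^2_{T_n^2}\}$, the strong Markov property says that the shifted processes $(B^1_{T_n^1 + t})_{t \geq 0}$ and $(B^2_{T_n^2 + t})_{t \geq 0}$ are Brownian motions starting from the common point $B^1_{T_n^1}$, independent of $\F_{T_n^1} \vee \F_{T_n^2}$. Redefine the post-$T_n^2$ portion of $B^2$ on $A$ to be identical to the post-$T_n^1$ portion of $B^1$ (i.e.\ set $B^2_{T_n^2 + t} := B^1_{T_n^1 + t}$ for $t \geq 0$), while leaving it as an independent Brownian increment on $A^c$. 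Since the conditional laws agree on $A$, this resampling preserves the marginal Brownian law of $B^2$, and on $A$ it gives the desired pathwise coincidence. Therefore
\[ \Prob\{B^1_{t + T_n^1} = B^2_{t + T_n^2} \text{ for all } t \geq 0\} \geq \Prob(A) \geq 1 - c e^{-n}. \]

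The invocation of maximal coupling from \cite{Lindvall} is the standard device that packages these two steps cleanly on a single probability space; beyond this it is essentially unnecessary, because once the two paths meet at a common exit point we may simply run a single Brownian motion thereafter. All of the analytic content of the statement is contained in the preceding lemma (ultimately a Poisson-kernel/Harnack comparison giving total-variation distance $O(e^{-n})$ between the exit distributions on $\p \ball_n$ from $w_1$ and from $w_2$). The only mild subtlety is that the coupling must be performed after the \emph{random} times $T_n^1, T_n^2$ rather than at a deterministic time, which the strong Markov property handles with no trouble. I do not expect any genuine obstacle.
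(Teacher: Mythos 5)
There is a genuine gap, and it concerns the quantifier structure of the statement rather than the analytic estimate. The lemma asks for \emph{one} probability space carrying a \emph{single} pair $(B^1,B^2)$ such that the bound $\Prob\{B^1_{t+T_n^1}=B^2_{t+T_n^2}\ \forall t\ge 0\}\ge 1-ce^{-n}$ holds \emph{simultaneously for all} $n\ge 0$; note that the events in question are increasing in $n$ (agreement after $T_m$ forces agreement after $T_n$ for $n\ge m$), which is exactly what makes such a uniform statement possible and useful. Your construction fixes $n$ in advance: you couple only the exit points on $\p\ball_n$ (via scaling and the one-time lemma) and glue the paths from $T_n$ onward. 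For that fixed $n$ the argument is fine, but the resulting pair does not satisfy the other bounds: for $m<n$ with $ce^{-m}<1$ the paths you build essentially never agree after $T_m$ (they are only glued at scale $n$), so the $m$-bound fails, and you cannot take a different probability space for each $n$. The paper itself gives no written proof; it disposes of exactly this issue by citing \emph{maximal coupling} in Lindvall's sense (Goldstein-type maximal agreement coupling), which produces one coupling whose agreement-after-$T_n$ events are nested and achieve, at every scale $n$ at once, the total variation bound $ce^{-n}$ that your first step correctly identifies.

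Two standard repairs are available. Either invoke the maximal (Goldstein) coupling of the two path-valued processes, feeding in precisely your total-variation estimate between the laws of the paths after $T_n$; or, more concretely, use the mirror (reflection) coupling: reflect $B^2$ across the perpendicular bisector hyperplane of $w_1,w_2$, which passes through the origin because $|w_1|=|w_2|=1$, so $|B^1_t|=|B^2_t|$ and hence $T_n^1=T_n^2$ before the meeting time, and let the paths coincide after they first meet on the hyperplane. Then $\{$not yet coupled at $T_n\}$ has probability comparable to the probability that a one-dimensional Brownian motion started within distance $1$ of the origin stays positive until the spatial path reaches radius $e^n$, which is $O(e^{-n})$ by a gambler's-ruin/half-space estimate (Lemma \ref{lemma.jan.1} plus a routine argument), and this single coupling yields all the bounds at once. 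An iterative version of your scale-by-scale gluing can also be made uniform in $n$, but because after a failed attempt the two endpoints may be far apart on $\p\ball_k$, the naive iteration only gives a rate $e^{-\beta n}$ with $\beta<1$ rather than the stated $e^{-n}$.
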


\subsection{Intersection exponent}

Suppose $W_t^1,W_t^2$ are independent Brownian motions.
Let $A_n$ denote the event that the paths do not intersect
before reaching $\p \ball_n$,
\[    A_n = \{W^1[0,T_n^1] \cap W^2[0,T_n^2] =
   \emptyset \}. \]
More generally, if $K_1,K_2$ are closed subsets of $\R^3$,
let
\[   A_n(K_1,K_2) =\{(W^1[0,T_n^1] \cup K_1) \cap
(W^2[0,T_n^2] \cup K_2) =
   \emptyset \mbox{ or } \{0\} \}. \]
This event is trivial unless $K_1 \cap K_2 = \emptyset$ or
$\{0\}$. Let $\F_n$ denote the $\sigma$-algebra generated by
\[ \left\{W_{s}^1, W_t^2: 0 \leq s \leq T_n^1, 0
  \leq t \leq T_n^2\right\}.\]
We use  $\Prob^{(w_1,w_2)}$ to denote probabilities assuming
$W_0^1 = w_1, W_0^2 = w_2$; if the $\w$ does not
appear, then the implicit assumption is $\w=(0,0)$.

If $\w=(w_1,w_2) \in \overline \ball^2$,  let
\[  q_n(\w) = q_n(w_1,w_2) = \Prob^{\w}(A_n) . \]
If $n \geq 0$, let
\[      \q_n = \sup_{\w \in \p \ball^2}
    q_n(\w) = \sup_{\w \in \ball^2} q_n(\w). \]
We conjecture that the supremum is taken on if $w_2 =
-w_1$,  but this has not been proved.  However, it is
not difficult to show that for fixed $n$, $q_n(\w)$
is continuous in $\w$ and hence there exists $\w = \w(n)
 \in \p \ball^2$ at which the supremum is attained.
Let $q_n$ denote the probability assuming that the
starting points are chosen uniformly and independently
on $\p \ball$,
\[      q_n =
 \Prob\{W^1[T_0^1,T_n^1] \cap
    W^2[T_0^2,T_n^2] = \emptyset\}
   = \int_{\p \ball} q_n(w_1,w_2) \, ds(w_2) . \]
Here $w_1$ is any point on  $\p \ball$ and
 $s$ denotes surface measure
 on $\p \ball$ normalized
to have total mass one.  Rotational invariance implies
that this quantity does not depend on the
choice of $w_1$.

If $0 \leq m \leq n$, let
\[     A_{m,n} = \{W^1[T_m^1,T_n^1]
   \cap W^2 [T_m^2,T_n^2] = \emptyset\}. \]
The strong Markov property and Brownian
scaling imply
\begin{equation}  \label{basic}
 q_{m+n}(\w)=\Prob^{\w}(A_{m+n})
   \leq \Prob^{\w}(A_{m} \cap A_{m,m+n})
  = \Prob^{\w}(A_{m}) \, \Prob^\w(A_{m,m+n}
   \mid A_m) \leq q_m(\w) \, \q_n.
\end{equation}
In particular, $\q_{m+n} \leq \q_m
 \, \q_n$.  From the subadditivity
of $\log \q_n$, we see that there exists $\xi
> 0$
such that
\[       \q_n \approx  e^{-n \xi}, \;\;\;\;
      \q_n \geq e^{-n \xi}, \]
where $\approx$ means that the logarithms of both sides are
asymptotic.  Using Lemma \ref{lemma.jan.1}, it is easy to check
that $\xi \leq 2$. In fact, it can be shown that $1/2 < \xi <
1$, but we will not need this estimate in this paper. While the
exact value of $\xi$ is not known, simulations point to a value
close to $.57$ (see Section \ref{sec:sim}).

Using the Harnack inequality, one can see that there is a $c <
\infty$ such that for all $\w \in \overline \ball^2$,
\[             q_{n+1}(\w) \leq \Prob^{\w}(A_{1,n+1})
    \leq c \, \Prob^{\bf 0}(A_{1,n+1}) =  c\,
   q_n , \]
and hence
\begin{equation}  \label{basictwo}
\q_{n+1} \leq c \, q_n .
\end{equation}
The first major step in establishing the existence of the
invariant measure is to prove that $\q_n \asymp
e^{-n\xi },$ meaning $\q_n$ is within multiplicative
constants of $e^{-n\xi }.$ Note that this immediately implies
$q_n \asymp e^{-n\xi }.$

\begin{proposition}\label{up2const}
There exists $c_* < \infty$ such that
\begin{equation}  \label{basic3}
       e^{-n \xi} \leq \q_n \leq
   c_* \, e^{-n \xi}.
\end{equation}
\end{proposition}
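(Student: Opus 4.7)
The lower bound $\overline{q}_n \geq e^{-n\xi}$ is immediate from Fekete's lemma: inequality \eqref{basic} shows that $\log \overline{q}_n$ is subadditive, so $\log \overline{q}_n/n \geq \inf_k \log \overline{q}_k/k = -\xi$. For the upper bound, the plan is to establish a complementary super-multiplicative bound
\[
\overline{q}_{m+n} \geq c_0 \, \overline{q}_m \, \overline{q}_n, \qquad m, n \geq 1,
\]
for some constant $c_0 > 0$ independent of $m,n$. Once this is in hand, setting $b_n = \log(c_0\,\overline{q}_n)$ gives a superadditive sequence that is bounded above (since $\overline{q}_n \leq 1$), and the superadditive version of Fekete's lemma yields $b_n/n \leq \sup_k b_k/k = \lim_k b_k/k = -\xi$. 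Thus $c_0 \overline{q}_n \leq e^{-n\xi}$ for every $n$, proving \eqref{basic3} with $c_* = 1/c_0$.

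To prove the super-multiplicative estimate, fix $m \geq 1$ and choose $\w^* \in \p\ball^2$ attaining $q_m(\w^*) = \overline{q}_m$ (such a $\w^*$ exists by continuity and compactness, as noted in the text). Writing $\w_m = (B^1_{T_m^1}, B^2_{T_m^2})$, the strong Markov property at $T_m^1, T_m^2$ together with Brownian scaling gives
\[
q_{m+n}(\w^*) = \E^{\w^*}\!\bigl[\mathbf{1}_{A_m}\, q_n(e^{-m}\w_m)\bigr].
\]
I would introduce an $\F_m$-measurable \emph{separation event} $S_m \subset A_m$ on which both (i)~$e^{-m}\w_m$ lies in a prescribed compact set $K \subset \p\ball^2$ bounded away from the diagonal $\{w_1 = w_2\}$, and (ii)~each of the two paths spends its last scale inside its own cone emanating from the origin, the two cones being disjoint. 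Property (ii) is what allows the continuations beyond $\p\ball_m$ to be analyzed as Brownian motions starting from a well-separated pair, while (i) places the restart configuration where Harnack-type comparisons apply. A Harnack comparison (applied one coordinate at a time to $q_n(\cdot,w_2)$ and $q_n(w_1,\cdot)$, after running one extra scale to pull the starting points inside $\ball$) then gives $q_n(\w') \geq c_2 \,\overline{q}_n$ uniformly for $\w' \in K$. Combined with the key separation estimate $\Prob^{\w^*}(S_m) \geq c_1\,\overline{q}_m$, this yields $\overline{q}_{m+n} \geq q_{m+n}(\w^*) \geq c_1 c_2\, \overline{q}_m\, \overline{q}_n$.

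The main obstacle is the separation estimate $\Prob^{\w^*}(S_m) \geq c_1\,\overline{q}_m$. A priori, conditioning on $A_m$ could force the paths to exit $\p\ball_m$ clustered together or in an adversarial configuration, and ruling this out is the substantive analytic content of the proposition. I would proceed by a bootstrap argument based on the cone estimate (Lemma \ref{conelemma}): for a suitable constant $k$, show first that a uniformly positive fraction of the pairs contributing to $A_{m-k}$ are already weakly separated at scale $m-k$ (say, $|w_1-w_2| \geq \delta'$ after rescaling), and then show that from any such weakly separated configuration there is a uniformly positive probability that both paths traverse the annulus $\ball_m\setminus\ball_{m-k}$ inside disjoint thin cones chosen to enforce the target separation in $K$. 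The first step is obtained by applying the same cone construction one scale earlier and iterating. This is the standard separation-lemma strategy for intersection exponents, and is where the bulk of the analytic work will go.
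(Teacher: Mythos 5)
Your overall skeleton coincides with the paper's: reduce the upper bound to a super-multiplicativity estimate via Fekete, and prove super-multiplicativity by forcing a separation event at the decomposition scale; your sketch of the separation estimate itself (iterating a cone estimate over dyadic distance scales) is the same strategy as the paper's Separation Lemma \ref{sep1}. The gap is in what happens after separation. Your Markov-property identity $q_{m+n}(\w^*) = \E^{\w^*}[\mathbf{1}_{A_m}\, q_n(e^{-m}\w_m)]$ is not an equality: given $\F_m$, the event $A_{m+n}$ requires the continuation of $B^1$ to avoid all of $B^2[0,T^2_{m+n}]$, including the past segment $B^2[0,T^2_m]$, and vice versa, so the conditional probability is $q_n(\og_m)$ for the full rescaled configuration, which by \eqref{oct15.1} is only $\leq q_n(e^{-m}\w_m)$ --- the wrong direction for your lower bound. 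Thus the quantity you must bound below on the separation event is a continuation probability with the past configurations attached; this is exactly why the paper's $\sep$ event controls the geometry of the whole terminal configuration over the last half scale, and why the paper then needs the extension result, Lemma \ref{sep2.alt}.

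Second, the step ``a Harnack comparison gives $q_n(\w') \geq c_2\,\overline q_n$ uniformly on $K$'' does not work as stated. The function $q_n(\cdot,w_2)$ is not harmonic, since the non-intersection event involves the initial segment of the path, so the mean-value property fails; Harnack does apply to $\Prob^{\w}(A_{1,n+1})$, which is harmonic in each variable on $\ball_1$, but that only yields the upper bound \eqref{basictwo}, $\overline q_{n+1}\le c\,q_n$, and turning it into a lower bound of the form $q_n(\w')\ge c\,\overline q_n$ would already require the one-scale estimate $\overline q_{n+1}\ge \rho_2\,\overline q_n$, which in the paper is itself a consequence of the separation lemma. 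The paper closes this step constructively rather than by Harnack: Lemma \ref{maximal} shows any maximizing pair has endpoints at distance at least $C_4$; the lemma on $E_n$ and Corollary \ref{jan9.cor1} show that maximizing paths can in addition be forced to avoid $\overline\ball_{-1}$ and thin cones $L_{\epsilon_1}$ around the partner's starting direction at only constant cost; Proposition \ref{oct15.prop1} then steers Brownian motions from an arbitrary separated pair along such cones and attaches near-maximal continuations, giving $\Prob^{\w'}(A_n\cap\Lambda_n)\ge c_\epsilon\,\overline q_n$ with the continuations confined to $L_\epsilon(B^j_0)\setminus\ball_{-\epsilon}$ inside $\ball_1$, so that they cannot hit the past of a $\sep$-configuration (Lemma \ref{sep2.alt}). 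Some construction of this type is what your argument needs in place of the Harnack appeal; your condition (ii) on disjoint cones is pointing the right way, but it must be carried through both the avoidance of the past configurations and the comparison with the global supremum $\overline q_n$.
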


\begin{proof}
Although this was essentially proved in \cite{Lcut}, we give the
proof in Section \ref{sepsec}.  We start by remarking that the
second inequality follows from the super-multiplicativity
inequality
\begin{equation} \label{super}
   \q_n \, \q_m \leq c \, \q_{n+m},
\end{equation}
which is what we will prove.
\end{proof}

\subsection{Notation and definitions}  \label{introinv}

If $W_t$ is a standard Brownian motion starting at the origin,
then the path $W_t,$ for $0 \leq t \leq T_n,$ can be scaled to
give a continuous path from $0$ to $\p \ball$. This gives a Markov
process indexed by $n$ on the path space. This process is not
ergodic in a strict sense, since one never completely forgets
the beginning of the path. However, if we only look at the path
from the first time it reaches $\p \ball_{-k}$ to the first time it
reaches $\p \ball$, then it is ergodic.  We set up the appropriate
notation in this subsection.

Let $\curves$ denote the set of continuous paths
$\gamma:[0,t_\gamma] \rightarrow  \overline \ball$ with
$\gamma(0) = 0, |\gamma(t_\gamma)| = 1$ and $0 < |\gamma(s) | <
1$ for $0 < s < t_\gamma$.   If $\gamma \in \curves$, for $k\ge
0,$ let
\[s_k = s_k(\gamma) = \inf\left\{t: |\gamma(t)| = e^{-k} \right\}\]
 be the first visit of $\gamma$ to $\ball_{-k}$ and let $\pi_k\gamma$
denote the curve starting at $\gamma(s_k)$,
\[  \pi_k\gamma:[0, t_\gamma - s_k]  \rightarrow \overline \ball, \;\;\;\;
  \pi_k\gamma(t) = \gamma(t+s_k). \]
If $\gamma,\gamma' \in \curves$, we write
$\gamma =_k \gamma'$ if $\pi_k\gamma = \pi_k\gamma'$.
We sometimes write just $\gamma$ for the set
$\gamma[0,t_{\gamma}]$.

If $\gamma \in \curves$, we can consider a Brownian motion
starting at $\gamma(t_\gamma)$ as a process in $\curves$
with initial condition $\gamma$.
 To be more specific, let
$W$ be a Brownian motion starting at $\gamma(t_\gamma)$. For
$n\ge 1$, define $\tilde \gamma_n$ to be the path obtained by
attaching the Brownian motion, stopped when it first reaches
$\p\ball_n$. In other words, the path $\tilde \gamma_n$ has time
duration $t_{\gamma} + T_n$ and
\[           \tilde \gamma_n(t) = \left\{
  \begin{array}{ll} \gamma(t), & 0 \leq t \leq
     t_{\gamma} \\
  W_{t - t_{\gamma}}, & t_{\gamma} \leq t
  \leq t_{\gamma}+ T_n . \end{array} \right.\]
Let $\gamma_n$ be the curve in $\curves$ obtained
from $\tilde \gamma_n$ by Brownian scaling:
\[     \gamma_n(t) =  e^{-n} \, \tilde \gamma_n(t
    e^{2n}), \;\;\;  0 \leq t \leq e^{-2n}
  \, [t_{\gamma} + T_n].\]
Observe that the path $\gamma_n$ is not continuous in $n$.  For
our purposes, we will only need to consider the process for
integer times $n$.

Let $\state$ denote the set of ordered
pairs $\overline \gamma = (\gamma^1,\gamma^2)
\in \curves \times \curves $
  with $\gamma^1
 \cap \gamma^2 = \{0\}$.  We write
$\pi_k \og  = (\pi_k\gamma^1,\pi_k\gamma^2)$ and $\overline
\gamma =_k \overline \gamma'$ if $\pi_k \og = \pi_k \og'$.

 Suppose $\overline \gamma = (\gamma^1,\gamma^2) \in \state$
 with endpoint $(w_1, w_2) \in \p \ball^2.$
Let $W^1,W^2$ be independent Brownian motions starting at $w_1,$
and $w_2$ respectively. Define $ \gamma^j_n$ as above, by
attaching to $\gamma^j$ the Brownian motion $W^j$ stopped at
$\p\ball_n$ and then scaling. Let $\og_n =
(\gamma^1_n,\gamma^2_n)$.  Note that $\og_n \in \curves \times
\curves$, but it is possible that $\og_n \not\in \state$. If
$\og_n \not \in \state$, then $\og_m \not\in \state$ for all $m
\geq n$. Let $A_n(\overline \gamma)$ denote the event
$A_n(\gamma^1,\gamma^2)$ as in the previous section and note
that we can write
\[   A_n(\og) = \left\{\gamma_n^1 \cap \gamma_n^2 =
  \{0\}\right\} = \left\{\og_n \in \state
\right\}. \]
Let
\[   q_n(\og ) = \Prob\left[A_n(\og)\right].\]
Note that for every $w_1,w_2 \in \p \ball_1$,
\begin{equation}  \label{oct15.1}
   q_n(w_1,w_2) = \sup q_n(\og),
\end{equation}
where the supremum on the right is over all $\og
=(\gamma^1,\gamma^2) \in \state$ whose terminal points are
$w_1,w_2$, respectively. Indeed, it is clear from the definition
that $q_n(\og) \leq q_n(w_1,w_2)$ for each such $\og$, and if we
choose the curves to be straight lines from $0$ to $w_1,w_2$,
respectively, then $q_n(\og) = q_n(w_1,w_2)$.  Here we use the
fact that   Brownian motions in $\R^3$ do not hit lines.
Similarly,
\begin{equation}  \label{oct15.2}
   \q_n = \sup_{\og \in \state}
            q_n(\og).
\end{equation}

Let $\wiener$ denote the Wiener measure on $\curves
\times \curves$, that is to say the measure induced by
taking two independent Brownian motions and stopping them
when they reach $\p \ball$.  More generally, if
$\og \in \state $, let $\wiener_n(\og)$
denote the probability measure induced by $\og_n$
as above.
If $\mu$ is a probability measure on $\curves \times
\curves $, let $\pi_k \mu$
denote the measure generated from $\mu$ by the projection
$\overline \gamma \mapsto  \pi_k \overline \gamma$. Note
that if $k < n$, then $\pi_k \wiener_n(\og)$ is
mutually absolutely continuous with respect to $\pi_k
\wiener$.

\subsection{Results}

Our main result discusses a measure on $\state$.  In order to
avoid talking about general measures, let us restrict to a
family of measures, that we will call $\wiener$-probability
measures on $\state$.  We say that $\nu$ is a
$\wiener$-probability measure on $\state \subset \curves \times
\curves$ if, for each $0 \leq k < \infty$, $\pi_k \nu$ is
absolutely continuous with respect to $\pi_k \wiener$.  In order
to specify such a probability measure, it suffices to specify
the measures $\{\pi_k \nu\}$ and to show that the curves have
finite time duration. To show the latter we need to show that
the time durations under the measures $\pi_k\nu$ are tight.

   If
$\overline \gamma \in \state$, let $\mu_n(\overline \gamma)$
denote the probability measure on $\state$ obtained as the
distribution of $\overline \gamma_n,$ given the event
$A_n(\og)$. Note that $\mu_n(\overline \gamma)$ is absolutely
continuous with respect to $\wiener_n(\og)$.

\begin{theorem} \label{main1}
 There exists a $\wiener$-probability
measure $\nu$ on $\state$, a function $Q:
\state \rightarrow (0,\infty)$, and constants
$\beta > 0, c < \infty$ such that
if  $\overline \gamma \in \state$
and $n \geq 1$.
\[   |e^{\xi n} \, q_n(\overline \gamma) - Q(
\overline \gamma)|
  \leq c \,e^{-\beta\, n}, \]
\[   \|\pi_{n/2}\mu_n(\overline \gamma)
   -  \pi_{n/2} \, \nu \| \leq c \, e^{- \beta n},\]
where $\|\cdot\|$ denotes variation distance.
\end{theorem}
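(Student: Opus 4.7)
The plan is to establish exponential mixing for the scaling Markov chain on $\state$ by combining a quantitative separation estimate with an iterative coupling argument.

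First, I would prove a quantitative separation lemma: there exists $\delta > 0$ such that for every $\og \in \state$ and every $n \geq 1$,
\[
\Prob^{\og}\bigl(\sep_n \mid A_n\bigr) \geq \delta,
\]
where $\sep_n$ is an $\F_n$-measurable event requiring both that the endpoints $B^1_{T_n^1}$ and $B^2_{T_n^2}$ are at distance at least some constant $2\rho$ from each other, and that in the time interval immediately before hitting $\p\ball_n$ each path $B^j$ traverses a narrow cone that keeps it well away from the other pair. This is essentially the argument behind Proposition \ref{up2const}: one decomposes $A_{n+1}$ into $A_n \cap A_{n,n+1}$, uses the Harnack inequality and the cone estimate (Lemma \ref{conelemma}) to force conditional probability mass onto separated configurations at scale $n$, and iterates using \eqref{basic3}.

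Next, I would prove a one-step coupling lemma: if $\og, \ogp \in \state$ both satisfy the terminal separation condition at scale $0$, then there is a probability space carrying Brownian extensions of both pairs from scale $0$ to scale $1$, and an event $\G$ with $\Prob(\G) \geq \epsilon > 0$, such that on $\G$ both extensions satisfy $A_{0,1}$ and the rescaled outer pieces agree, $\og_1 =_0 \ogp_1$. The idea is that separation lets the Coupling Lemma be applied independently in small neighbourhoods of each separated endpoint, because each Brownian motion has time to couple to its partner before it can interact with the other pair, while the cone/separation geometry makes the event $A_{0,1}$ have positive conditional probability under the coupled evolution.

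Combining the two lemmas yields the claimed contraction. Apply the separation lemma at scale $\lceil n/2\rceil$ to both $\og$ and $\ogp$; then attempt the one-step coupling lemma independently at each integer scale $m = \lceil n/2\rceil+1,\ldots,n$. After the first successful one-step coupling the rescaled outer portions of the two conditioned chains coincide and remain equal forever after, so the probability that $\pi_{n/2}\mu_n(\og)$ differs from $\pi_{n/2}\mu_n(\ogp)$ is at most $(1-\epsilon)^{n/2} \leq c\,e^{-\beta n}$. Taking $\ogp$ to be a fixed reference curve shows that $\pi_k\mu_n(\og)$ is Cauchy in total variation as $n \to \infty$; the limit defines $\pi_k\nu$, and these projections are consistent, so they assemble into a $\wiener$-probability measure $\nu$ on $\state$ (tightness of the time durations at each scale follows from standard exponential Brownian tail bounds). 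Setting $Q(\og) = \lim_n e^{n\xi}\, q_n(\og)$ and comparing $q_{n+1}(\og)/q_n(\og)$ to the analogous ratio for the reference pair via the same coupling gives the first bound at the same exponential rate.

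The main obstacle is the one-step coupling lemma. The coupling must preserve, with bounded-below probability, the non-intersection event $A_{0,1}$. Separation at the endpoints is precisely the geometric condition that allows each pair of Brownian increments to be coupled in a small region near its own starting endpoint without creating intersections with the other pair, but making these local couplings respect the global conditioning on $A_n$, uniformly in the starting configurations, is the technical heart of the argument.
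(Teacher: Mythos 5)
Your outline follows the paper's general architecture (separation lemma, coupling, then assembly of $\nu$ and $Q$), but it contains a genuine gap at the decisive step: the claim that ``after the first successful one-step coupling the rescaled outer portions of the two conditioned chains coincide and remain equal forever after,'' which is what produces your $(1-\epsilon)^{n/2}$ bound. This is false as stated. The two chains are not free Brownian extensions; they are the extensions conditioned on $A_N(\og)$ and $A_N(\ogp)$ respectively, i.e.\ weighted at each step by $q_{N-m}(\og_m)$ versus $q_{N-m}(\ogp_m)$. Even when $\og_m =_j \ogp_m$, these weights differ, because the configurations still disagree deep inside the unit ball and the attached Brownian motions can re-enter that region (and, more to the point, the harmonic weights see the differing inner portions). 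So agreement of the outer portions at one scale does not make the two conditioned transition kernels identical, and the coupling can break at every subsequent scale. The paper's entire middle section exists to control exactly this: Lemmas \ref{gen estimates} and \ref{q estimate} show that if the pair is in $\good_k$ and agrees to depth $k$, the Radon--Nikodym derivatives of the two conditioned laws differ only by a factor $1+O(e^{-k/2})$ (see \eqref{oct13.3}); Proposition \ref{oct12.prop1} then uses maximal coupling to keep the paths matched, with failure probability $C_0e^{-k/2}$ that \emph{decays as the agreement depth grows}; Proposition \ref{oct12.prop1.alt} restarts the coupling with probability $b>0$ when agreement is lost; and the proof of Theorem \ref{restate} tracks the agreement-depth process $\sigma(n)$ and compares it with a Markov chain to extract the exponential rate. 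Without a quantitative statement of how unlikely the coupling is to break as a function of the current agreement depth (and the ``good'' condition ensuring $q_1(\og_m)$ is not too small, needed to convert the additive error of \eqref{star} into a multiplicative one), your argument only yields that the chains merge with positive probability per block, not that they stay merged, and the claimed exponential bound does not follow.

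A secondary, related imprecision: in your one-step coupling you couple the unconditioned extensions locally near the separated endpoints, but the measures you must couple are $\mu_{m,N}(\og)$ and $\mu_{m,N}(\ogp)$, which carry the global conditioning on reaching scale $N$ without intersection; making the local coupling compatible with that conditioning, uniformly in $\og,\ogp$ and $N$, is precisely what the paper's Radon--Nikodym estimates accomplish, and you acknowledge but do not resolve this (``the technical heart''). The final assembly steps you describe (Cauchy-ness of $\pi_k\mu_n(\og)$, consistency, tightness of time durations, and the construction of $Q$ from the ratios $q_{n+1}/q_n$) do match the paper's Section on the invariant measure and Proposition \ref{prop.Q}, but they all presuppose the exponential coupling estimate whose proof is the missing piece.
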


The proof uses a coupling argument and the main work is to prove
the following.

\begin{theorem}   \label{main2}
 There exist  constants
$\beta > 0, c < \infty$ such that  if
$\overline \gamma,\overline
\gamma' \in \state$
and $n \geq 1$,
\[   \|\pi_{n/2}\mu_n(\overline \gamma)
   -  \pi_{n/2} \mu_n(\overline
\gamma') \| \leq c \, e^{- \beta n}.\]
% where $\|\cdot\|$ denotes variation distance.
\end{theorem}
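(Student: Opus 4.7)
The plan is to prove Theorem \ref{main2} by a multi-scale coupling argument driven by the separation phenomenon for conditionally non-intersecting Brownian paths. The goal is to construct a coupling of the conditional measures $\mu_n(\og)$ and $\mu_n(\og')$ under which the two realizations agree on the portion from scale $n/2$ onward with probability at least $1 - c\,e^{-\beta n}$; this agreement forces the stated bound in variation distance after applying $\pi_{n/2}$.

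The first ingredient is a \emph{separation lemma}, which I would prove in parallel with Proposition \ref{up2const} in Section \ref{sepsec}. Define an event $\sep_m$ on $\p\ball^2$ saying that the endpoints $B^1_{T_m^1}, B^2_{T_m^2}$ lie at angular distance at least some $\delta_0 > 0$, and that the final portions of the two paths respect complementary cones that keep them away from each other. The lemma should assert that there exists $c_0 > 0$ such that for every $\og \in \state$ and every $1 \leq m \leq n$,
\[ \Prob^{\og}(\sep_m \mid A_n) \geq c_0 . \]
Together with the Harnack inequality and the super-multiplicativity (\ref{super}) that underlies Proposition \ref{up2const}, this forces the density of the pair of endpoints at scale $m$ under $\mu_n(\og)$, restricted to separated configurations, to be uniformly comparable to surface measure on $\p\ball^2$, with comparison constants independent of $\og$ and $n$.

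Given that both $\og$ and $\og'$ have produced separated configurations at a common scale $m$, I would couple their extensions so that the attached Brownian motions agree from scale $m + k_0$ onward, for some fixed $k_0$. This step combines the coupling lemma in the excerpt (which, at the unconditional level, identifies two Brownian motions started on $\p\ball$ after they hit $\p\ball_{k_0}$ with probability $\geq 1 - c\,e^{-k_0}$) with the uniformly bounded density on the separated part, which transfers the unconditional coupling to $\mu_n(\og)$ and $\mu_n(\og')$ at the cost of only a bounded multiplicative loss. Separation also guarantees that the non-intersection event on the window $[m, m+k_0]$ has probability bounded below, so the coupling survives the additional conditioning on $A_n$ up to the terminal scale.

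To obtain the exponential rate I would iterate the previous step over scales $m = 1, 2, \ldots, n/2$: at each scale the coupling attempt succeeds with probability at least some $p > 0$ independent of $\og, \og', n$, and failures form a Markov chain in the current pair of path heads, so the probability that all $\Theta(n)$ attempts fail is at most $(1-p)^{\Theta(n)} \leq c\,e^{-\beta n}$. The hard part will be the separation lemma itself, that is, showing that conditioning on the long-range event $A_n$ does not destroy the positive probability of being in a separated state at every intermediate scale. This is the technical heart of the argument and is tightly linked to the super-multiplicativity of $\overline q_n$; once it is in hand, the multi-scale coupling and iteration are comparatively standard, though one must still verify that the coupling constructed at scale $m$ is compatible with the conditioning on $A_n$ beyond scale $m + k_0$.
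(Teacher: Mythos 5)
Your overall strategy (separation lemma plus a scale-by-scale coupling) is the same as the paper's, but there is a genuine gap at the step you defer to the end: showing that a coupling achieved at an intermediate scale $m$ survives the conditioning on $A_n$ up to the terminal scale. Once the attached Brownian motions agree beyond scale $m+k_0$, the two conditional measures $\mu_n(\og)$ and $\mu_n(\og')$ are still \emph{not} equal on the future, because their Radon--Nikodym derivatives with respect to the common Wiener extension are $q_{n-m}(\og_m)/q_n(\og)$ and $q_{n-m}(\og'_m)/q_n(\og')$, and the configurations $\og_m,\og'_m$ still disagree near the origin. So a ``successful'' merge is not permanent: at every subsequent step the maximal coupling of the conditional laws can fail with some probability, and your accounting --- each attempt succeeds with probability $\geq p$, so all $\Theta(n)$ attempts fail with probability $(1-p)^{\Theta(n)}$ --- only bounds the event that no merge ever occurs; it does not bound the event that the paths are still merged at scale $n/2$ and remain so to scale $n$, which is what the variation bound for $\pi_{n/2}\mu_n$ requires. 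What makes the theorem true is that the retention probability improves geometrically: if the two configurations agree except inside $\ball_{-k}$ and the current configuration is ``good'' (e.g.\ $q_1(\og)\geq e^{-k/2}$, so the denominator in the density does not degenerate), then the conditional densities differ by a factor $1+O(e^{-k/2})$, the merge is lost with probability only $O(e^{-k/2})$, and upon survival the agreement level $k$ increases linearly because the disagreement is scaled further toward the origin. One then compares the agreement level with a Markov chain whose failure probabilities are summable, together with a restart step (your separation-based merge, with probability bounded below) when the level is too small. This density comparison for configurations agreeing outside a small ball, the control of the exceptional ``not good / dips back near the origin'' configurations under the conditioned measure, and the resulting improving-level iteration are the technical heart of the proof, not a verification detail.

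A secondary issue: your claim that under $\mu_n(\og)$ the law of the endpoint pair at scale $m$, restricted to separated configurations, is comparable to surface measure on $\p\ball^2$ is both unproved and not sufficient as stated, since the conditional law of the future given the past at scale $m$ depends on the entire pair of paths (through the non-intersection constraint and through $q_{n-m}$ of the whole configuration), not just on the endpoints. The way around this, as in the paper, is to build the restart coupling at the level of paths: use separation for both pairs, force the two pairs to have identical endpoints at a fixed scale with positive probability, then run the \emph{same} attached Brownian motion close to radial lines for a fixed number of scales, so that the two configurations agree except in a small ball around the origin; from then on the improving-level argument above takes over.
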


The rest of the paper is organized as follows. In Section
\ref{sepsec}, we prove Proposition \ref{up2const}. The coupling
result (Theorem \ref{main2}) is proved in Section
\ref{sec:coupling} and convergence to an invariant measure and
the proof of Theorem \ref{main1} are done in Section
\ref{sec:inv}.

\section{Up-to-constants estimates}\label{sepsec}

\subsection{Separation lemma}

The key technical lemma that allows the argument to work
is the separation lemma.  The statement is very believable ---
two paths that are conditioned not to intersect are likely
to be not very close at their endpoints.  The
 separation lemma gives a stronger statement that, no matter
how close the paths are when they reach $\p \ball_n$, those that
reach $\p \ball_{n+1}$ have a good chance of having separated.
More precisely, it asserts that there is a uniform estimate for
the conditional probability of separation of the paths at times
$(T_{n+1}^1,T_{n+1}^2),$ uniform over all possible
configurations up to time $(T_n^1,T_n^2)$.  It is an analogue of
the boundary Harnack principle.

\begin{figure}[h]
\begin{center}
\includegraphics{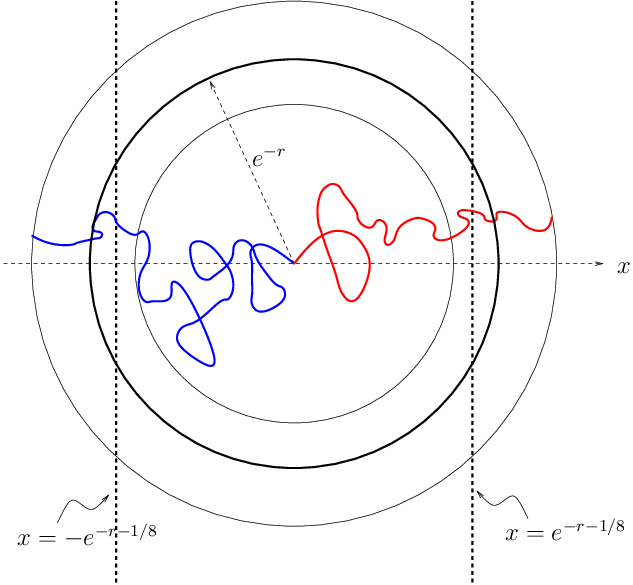}
\caption{A separation event.}
\end{center}
\end{figure}

There are many ways to define the ``separation'' event; we will
make one arbitrary choice.  Let
\[   I(r) = \{(x,y,z) \in \R^3: x \geq e^{r}\}, \]
and let $\sep$ denote the set of $\overline \gamma =
(\gamma^1,\gamma^2) \in \state$ such that for all $ 0 \leq r
\leq 1/2$,
\[           \gamma^1[s_r^1,t^1
] \subset I\left(
  -r - \frac 18 \right), \;\;\;\;\;
     \gamma^2[s_r^2,t^2] \subset -I\left(
  -r - \frac 18 \right), \]
\[     \gamma^1(s_r^1) \in I\left( -r -
  \frac {1}{16} \right), \;\;\;\;\;
      \gamma^2(s_r^2) \in -  I\left(-r -
  \frac {1}{16} \right). \]

Here $t^j = t_{\gamma^j}$ and $s_r^j = \inf\{t: |\gamma^j(t)| =
  e^{-r}\}$.
A typical pair $\og \in \sep$ is pictured above, viewed as
projected on the $xz$-plane. The inner and outer balls have
radii $e^{-1/2}$ and $1$, respectively, and separation is
illustrated for an arbitrary $0\le r \le 1/2$.

\begin{lemma}[Separation lemma]  \label{sep1}
There exists $\rho_1 > 0$ such that if $\og \in
\state$ and $n \geq 1$,
\begin{equation}  \label{oct12.1}
  \Prob\left\{\og_n \in \sep \;\vert\; A_n(\og)
  \right\} \geq \rho_1.
\end{equation}
\end{lemma}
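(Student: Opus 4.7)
The plan is to prove Lemma~\ref{sep1} by induction on $n$, combining the cone estimate (Lemma~\ref{conelemma}), the transience estimate~\eqref{jan14.3}, and the Harnack inequality. Since $\og_n \in \sep$ is a condition only on the rescaled paths at scales $r \in [0,1/2]$---equivalently, only on the Brownian increments in the annulus $\ball_n \setminus \ball_{n-1/2}$---conditioning on $\F_{n-1/2}$ and using strong Markov reduces the problem to a half-scale question about the continuations of $B^1, B^2$ from $(w^*_1, w^*_2) := (B^1(T_{n-1/2}^1), B^2(T_{n-1/2}^2))$ out to $\p\ball_n$.

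The key observation is that the $r = 1/2$ clause of $\og_n \in \sep$ forces the rescaled hitting point $\gamma^j_n(s^j_{1/2}) = e^{-n} w^*_j$ to lie in $I(-9/16)$, which is equivalent to $(w^*_j)_1/|w^*_j| \geq e^{-1/16}$. But this is exactly the $r = 0$ clause of $\og_{n-1/2} \in \sep$. Thus, if the inductive hypothesis gives $\Prob(\og_{n-1/2} \in \sep \mid A_{n-1/2}(\og)) \geq \rho_1$, then with uniform positive conditional probability the endpoint configuration required for $\og_n \in \sep$ to even be compatible is present. Given this compatibility, Lemma~\ref{conelemma} applied at half-scale via Brownian scaling provides a universal lower bound on the probability that each continuation $B^j$ traverses the half-annulus while satisfying the remaining cone conditions of $\sep$. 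The transience estimate~\eqref{jan14.3} lets each continuation escape outward without returning to $\overline\ball_{n-1/2}$, so the past (confined there) is automatically avoided; disjointness of the $\pm x$ cones ensures the two continuations do not intersect each other. Combining with $\Prob(A_n(\og)) \leq \overline q_{1/2}\,\Prob(A_{n-1/2}(\og))$ from \eqref{basic} yields a uniform lower bound on $\Prob(\og_n \in \sep \mid A_n(\og))$.

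The main obstacle is twofold. First, for the inductive recursion of the form $\rho_n \geq (c^2/\overline q_{1/2})\,\rho_{n-1/2}$ to close rather than decay over many iterations, the multiplicative factor $c^2/\overline q_{1/2}$ must be at least $1$; achieving this requires upgrading the naive cone-lemma lower bound via a Harnack-type comparison showing that, among trajectories satisfying $A_{n-1/2, n}$, the cone-confined ones form a positive fraction rather than an exponentially small one. Second, the base case $n = 1$ has no earlier scale to invoke, so one must argue directly: I would use the Harnack inequality to transit each continuation from its arbitrary endpoint $w_j \in \p\ball$ into the $\pm x$ cap of $\p\ball$ at universal cost, then apply Lemma~\ref{conelemma} and~\eqref{jan14.3} to complete the construction. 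These two issues---non-deterioration of constants in the recursion, and the base case---are the technical heart of the argument.
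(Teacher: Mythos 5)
Your approach is genuinely different from the paper's, and it contains a gap that I do not think can be repaired within your framework. The paper first observes that the general case follows from the case $n=1$ by applying it to $\og_{n-1}$, so the whole content of the lemma is the $n=1$ case — which is exactly the base case of your induction — and your proposed base-case argument fails for a fundamental reason. The difficulty in the Separation Lemma is uniformity over initial configurations $\og = (K_1, K_2)$ whose endpoints $w_1, w_2 \in \p\ball$ (or whose hulls) may be \emph{arbitrarily close}. In that regime $\Prob(A_1(\og))$ is itself arbitrarily small, so an argument that produces a fixed lower bound $\Prob(\og_1 \in \sep \cap A_1(\og)) \geq c$ via Harnack-plus-cone cannot possibly give a uniform lower bound on the \emph{conditional} probability $\Prob(\og_1 \in \sep \mid A_1(\og))$. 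What you would actually get from the cone estimate is a bound of the form $c_1 D^\alpha$ where $D$ is the initial separation distance (this is precisely the paper's estimate \eqref{cone}), and the dependence on $D$ is exactly the obstacle that must be removed. Transiting ``into the $\pm x$ cap at universal cost'' does not help because the cost of non-intersection is not universal over the starting configuration.

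The second obstacle you flag — whether the half-scale recursion $\rho_n \gtrsim (c/\overline q_{1/2})\,\rho_{n-1/2}$ closes — is also real, and you have not shown $c \geq \overline q_{1/2}$; there is no reason to expect this for the raw constant the cone lemma delivers, and the ``Harnack-type upgrade'' you gesture at is not spelled out. But even if this were resolved, you would be left with the base case, which is where the argument breaks. The paper's key idea, which your proposal does not contain, is to iterate over the \emph{separation distance} $D$ rather than over the scale $n$: one defines a quantity $h_n$ as an infimum of the conditional separation probability over configurations with $D \geq 2^{-n}$ and over a carefully chosen family of radial offsets, shows $h_n > 0$ for each fixed $n$ via the cone estimate \eqref{cone}, and then establishes the recursion $h_{n+1} \geq h_n(1 - \delta_n)$ with $\sum \delta_n < \infty$ by a ``separate or die'' estimate: starting from $D \geq 2^{-(n+1)}$, within time of order $n^2 2^{-n}$ the paths either intersect or separate to $D \geq 2^{-n}$, and the failure probability $c_2 2^{-\beta n^2}$ is negligible compared to the success probability $c_1 2^{-n\alpha}$. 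This double-exponential-versus-exponential comparison, not an induction on the scale, is what makes the uniform bound possible.
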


We first note that it suffices to prove \eqref{oct12.1}
for $n=1$; the general case can be deduced by applying
this case to $\og_{n-1}$.  More generally, we can see
that for all $n \geq 1$,
\[  \Prob\left[A_n(\og) \cap \{\og_n \in \sep\}
       \vert \F_{n-1} \right] \geq \rho_1 \,
      \Prob\left[A_n(\og) \vert \F_{n-1} \right].\]
Let $J_n$ denote the event
\[        J_n = \{\overline \gamma_n \in \sep\}. \]
Note that, for $n\ge 1$, if $\og_n \in \state$, then the
separation event does not depend on $\og$. In particular, we can
consider as initial configuration the pair $\og=(K_1,K_2)$,
where $K_1,K_2$ are closed subsets of $\overline \ball$ and
define $J_n$ just as above for this initial configuration. We
will prove this slightly stronger form of the lemma for $n=1$.

\begin{lemma}[Separation lemma, alternative form]
There exists $\rho_1 > 0$ such that if $K_1,K_2$
are closed subsets of $\overline \ball$
 and $\w = (w_1,w_2) \in \p \ball^2$
with $K_j \cap \p \ball = \{w_j\} $, then
\[    \Prob^{\w}(A_{1}(K_1,K_2) \cap J_1)
     \geq \rho_1\, \Prob^{\w}(A_1(K_1,K_2)). \]
\end{lemma}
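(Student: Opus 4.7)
The plan is to construct a specific geometric ``good continuation'' event $G \subseteq A_1(K_1,K_2) \cap J_1$ and to show, via the strong Markov property at the stopping times $\tau^j := T^j_{1/2}$, that $\Prob^{\w}(A_{1/2}(K_1,K_2) \cap G) \geq c\,\Prob^{\w}(A_{1/2}(K_1,K_2))$ for a universal constant $c > 0$. Combined with the trivial inclusion $A_1 \subseteq A_{1/2}$, this yields the lemma with $\rho_1 = c$.

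Fix $\delta \in (e^{-1/16}, 1)$ and set $O_+ = \{(x,y,z) : x \geq \delta \sqrt{x^2+y^2+z^2}\}$ and $O_- = -O_+$. These cones are disjoint, and $(O_+ \cup O_-) \cap \{r > 1\}$ lies strictly outside $\overline\ball$ (writing $r=|(x,y,z)|$); the choice $\delta > e^{-1/16}$ ensures that any path confined to $O_\pm$ throughout the outer half-scale satisfies, after rescaling by $e^{-1}$, the separation thresholds in the definition of $\sep$. Define $G = G_1 \cap G_2$, where $G_j$ is the event that $B^j[\tau^j, T^j_1]$ lies in $O_\pm \cap \{r > e^{1/2}\}$ (sign corresponding to $j=1,2$). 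On $A_{1/2} \cap G$, the future portions $B^j[\tau^j, T^j_1]$ are confined to disjoint subsets of the annulus $\{e^{1/2} < r \leq e\}$, so they cannot meet each other, cannot meet $K_{3-j} \subseteq \overline\ball$, and cannot meet the past $B^{3-j}[0,\tau^{3-j}] \subseteq \overline\ball_{1/2}$; hence $A_1$ holds, and the cone aperture forces $J_1$.

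To bound $\Prob(G \mid \F_{1/2})$ uniformly from below, I apply the strong Markov property at $\tau^j$ and at $\sigma^j := T^j_{3/4}$, splitting $[\tau^j, T^j_1]$ into a transition phase $[\tau^j,\sigma^j]$ on which $B^j$ must exit $\ball_{3/4}$ at a point in a fixed cap $U_\pm \subseteq \p\ball_{3/4}$ deep inside $O_\pm$ without ever returning to $\overline\ball$, and a cone phase $[\sigma^j, T^j_1]$ on which $B^j$ must remain in $O_\pm$. The cone phase is Lemma \ref{conelemma} applied across a single unit of scale starting from a point well inside the cone, giving a uniform $c_2 > 0$. The transition phase is a shell-exit estimate on $\ball_{3/4}\setminus\overline\ball$ combining the 3D gambler's-ruin formula (via the Bessel-3 scale function $-1/r$) for non-return to $\overline\ball$ with the Harnack inequality for the angular exit distribution, giving a uniform $c_1 > 0$. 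By independence of $B^1$ and $B^2$, $\Prob(G \mid \F_{1/2}) \geq (c_1 c_2)^2$, and one concludes
\[
  \Prob^{\w}(A_1 \cap J_1) \geq \Prob^{\w}(A_{1/2} \cap G) = \E\!\left[\mathbf{1}_{A_{1/2}} \Prob(G \mid \F_{1/2})\right] \geq (c_1 c_2)^2 \Prob^{\w}(A_{1/2}) \geq (c_1 c_2)^2 \Prob^{\w}(A_1).
\]

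The main obstacle is the uniform shell-exit estimate. A delicate point is that Brownian motion from $\p\ball_{1/2}$ almost surely re-crosses that sphere in every neighborhood of $\tau^j$, so the literal event ``$B^j[\tau^j, T^j_1] \subseteq \{r > e^{1/2}\}$'' has probability zero; one instead requires non-return to $\overline\ball$ only after the first excursion reaches $\p\ball_{1/2+\delta_0}$ for a suitable small $\delta_0 > 0$, with $\delta$ and $U_\pm$ correspondingly shrunk. The radial gap $e^{1/2}-1>0$ ensures that the microscopic dipping below $\p\ball_{1/2}$ stays above $\overline\ball$, so it does not threaten non-intersection with $K_{3-j}$; handling its possible interaction with the terminal excursion of the past $B^{3-j}[0,\tau^{3-j}]$ near $B^{3-j}_{\tau^{3-j}}$ requires some care but is standard, using that $B^j_{\tau^j}\neq B^{3-j}_{\tau^{3-j}}$ on $A_{1/2}$ together with the strong Markov property at $\tau^j$.
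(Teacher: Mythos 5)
The step that fails is the uniform lower bound $\Prob(G\mid \F_{1/2})\geq (c_1c_2)^2$ on $A_{1/2}$. Any event $G$ strong enough that $A_{1/2}\cap G\subseteq A_1(K_1,K_2)\cap J_1$ must in particular force the two continuations $B^j[\tau^j,T^j_1]$ to avoid each other; but on $A_{1/2}$ the endpoints $B^1_{\tau^1},B^2_{\tau^2}\in\p\ball_{1/2}$ can be at arbitrarily small distance $\epsilon$, and one path's terminal hairs can crowd the other's endpoint, so the conditional probability that the continuations avoid each other at all (let alone enter prescribed antipodal cones) tends to $0$ with $\epsilon$ and is not bounded below. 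Hence the chain $\E[\mathbf{1}_{A_{1/2}}\Prob(G\mid\F_{1/2})]\geq c\,\Prob^{\w}(A_{1/2})$ cannot hold with a universal $c$. Your two versions of $G$ exhibit the dichotomy: the strict version ($B^j[\tau^j,T^j_1]\subset O_\pm\cap\{r>e^{1/2}\}$) does give the inclusion but has conditional probability zero, and, recrossings aside, is impossible whenever $B^j_{\tau^j}\notin O_\pm$, which happens with positive probability; the relaxed version with a transition phase through the shell $\{1<r<e^{3/4}\}$ admits a uniform bound (Harnack plus gambler's ruin, as you say) but no longer implies $A_1$, since the transition portion of $B^1$ may hit $B^2[0,\tau^2]$, which reaches radius $e^{1/2}>1$, or $B^2$'s own transition portion. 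The closing remark that the interaction near $B^{3-j}_{\tau^{3-j}}$ is ``standard, using that $B^j_{\tau^j}\neq B^{3-j}_{\tau^{3-j}}$'' is exactly where the whole difficulty of the lemma sits: mere distinctness yields no uniform constant, and any constant degenerating with the distance destroys $\rho_1$. Note that your argument never engages the quantity $D=\min\{\dist(w_1,K_2),\dist(w_2,K_1)\}$, whose smallness is the only obstruction the lemma has to overcome.

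The paper's proof is built precisely to beat this: it stratifies configurations by $D\geq 2^{-n}$, defines $h_n$ as the worst-case conditional separation probability at that scale (with a small time-shift slack $u_n$), gets $h_n\geq c_1 2^{-n\alpha}$ from the cone estimate, and proves $h_{n+1}\geq h_n(1-\delta_n)$ by showing that paths started at separation $2^{-(n+1)}$ improve to separation $2^{-n}$ within radial time $n^2 2^{-n}$ except on an event of probability $c_2 2^{-\beta n^2}$, which is negligible compared with $c_1 2^{-n\alpha}$; summability of $\delta_n$ then gives a bound uniform in $D$. Some such multi-scale mechanism (or a bound on the separation probability conditioned on future non-intersection) is unavoidable; a single conditioning at radius $e^{1/2}$ with an unconditional continuation estimate cannot be uniform over $\state$. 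A secondary, more repairable point: membership in a fixed cone together with $r>e^{1/2}$ does not give $\og_1\in\sep$, since $\sep$ demands, for every $0\leq r\leq 1/2$, that after the first visit of the rescaled path to radius $e^{-r}$ the $x$-coordinate stays above $e^{-r-1/8}$; that is a radial non-return condition with the $1/8$, $1/16$ slack at every intermediate scale, not just above $e^{1/2}$.
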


\begin{proof}
Let
\[  D = D(K_1,K_2,w_1,w_2) =
   \min\left\{\dist(w_1,K_2),\dist(w_2,K_1) \right\}.\]
Let
\[    u_n = \sum_{j=n}^\infty j^2 \, 2^{-j}.\]
Let $J(r_1,r_2)$ be the event that the following facts hold for
$r_1 \leq s \leq r_2$:
\[           W^1[T_{s}^1,T_{r_2}^1] \subset I\left(
  s - \frac 18 \right), \;\;\;\;\;
      W^2[T_{s}^2,T_{r_2}^2] \subset -I\left(
  s - \frac 18 \right), \]
\[     W^1(T_{s}^1) \in I\left( s -
  \frac {1}{16} \right), \;\;\;\;\;
       W^2(T_{s}^2) \in -  I\left(s -
  \frac {1}{16} \right). \]
Using this notation, we observe that $J_1=J(1/2,1)$.

For $n$ sufficiently large so that $u_n \leq 1/4$, let
 $h_n$  be
\begin{equation}\label{hn}       h_n=\inf \, \frac{\Prob^\w(A_{1-r}(K_1,K_2) \cap
     J(\frac 12 - r, 1-r))}{\Prob^\w
     (A_{1-r}(K_1,K_2))},
     \end{equation}
where the infimum is over $0 \leq r \leq u_n$; all closed $K_1,
K_2$ in $\overline \ball$; and all $\w = (w_1,w_2) \in \p
\ball^2$ such that $D(K_1,K_2,w_1,w_2)  \geq 2^{-n}$. The lemma
will follow if we prove that $\inf_n h_n > 0$ and then letting $n\to
\infty$. For this, it suffices to show that $h_n
> 0,$ for each $n,$ and that there exists a summable sequence
$\delta_n<1$ such that
\begin{equation}  \label{cone3}
        h_{n+1} \geq h_n \, [1 - \delta_n].
\end{equation}

We claim that there exist $c_1,\alpha$ such that for all
$K_1,K_2,w_1,w_2$ as above,
\begin{equation}  \label{cone}
        \Prob^\w( A_2(K_1,K_2)
    \cap J( 1/4 ,  5/4)) \geq c_1 \,
        D^{\alpha}.
\end{equation}
To see this, we find infinite cones $O_1,O_2$
as in
Lemma \ref{conelemma} and vertices $z_1,z_2$ such that
the following hold:

\begin{figure}[h!]
\begin{center}
\includegraphics{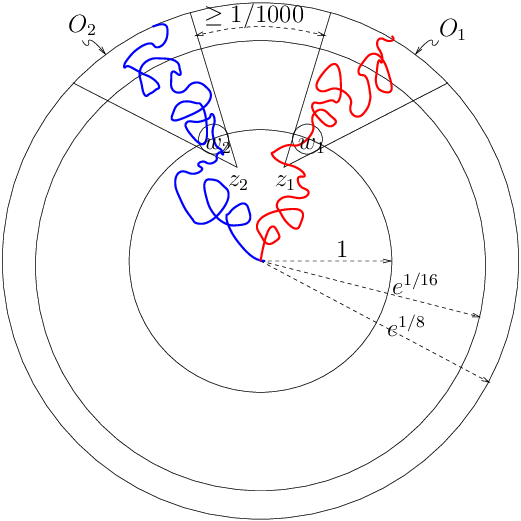}
\caption{Separation into cones}
\end{center}
\end{figure}

\begin{itemize}
\item  $D/100 < |z_j - w_j| < D/20 $.
\item  $w_j \in O_j + z_j$ and $D/100 < \dist(w_j,
\p O_j) < D/20 $.
\item  The intersection of $O_j + z_j$ with $ \overline
 \ball$ is
contained in the ball of radius $D/10$ about $w_j$.
\item If $V_j =(O_j+z_j) \cap (\R^3 \setminus \ball_{1/16})$,
then
$\dist (V_1,V_2) \geq 1/1000$.
\end{itemize}

Note that these conditions imply that $(O_1+z_1) \cap K_2 =\emptyset$ and $(O_2+z_2) \cap K_1 =\emptyset.$
We leave it to the reader to see that such cones can be found.
Moreover, we can choose the same $O_1,O_2$, up to a rotation,
for each value of $D$. Given this, Lemma \ref{conelemma} and
Brownian scaling imply that there exist $c,\alpha$ such that
with probability at least $c \, D^{\alpha}$, $W^j[0,T_{1/8}^j]
\subset O_j + z_j$ for $j=1,2$. Note that, on this event, the
paths do not intersect and are somewhat ``separated''.  It is
not hard to convince oneself that, given this event, there is a
positive probability that the extended paths do not have an
intersection and are in $J(1/4,5/4)$.  This establishes
\eqref{cone}, and from this
  we see that $h_n > 0$ for each $n$ with
$u_n \leq 1/4$. Furthermore, from \eqref{cone}, we get that for
all $n$ with $u_n\le 1/4,$
\[h_n\ge c_1 2^{-n\alpha}.\]

Let
\[  K_j(s) = e^{-s} \, \left(K_j \cup
    W^j[0,T_{s}^j] \right), \]
\[  D_s = D\left(K_1(s), K_2(s), e^{-s} \,
 W^1(T_s^1),
  e^{-s} \, W^2(T_s^2)\right), \]
\[  \tilde \tau_n = \min
\left\{s:  D_s \geq 2^{-n} \right\}, \;\;\;\;
  \tau_n = (n^2 2^{-n} ) \wedge \tilde \tau_n
 .\]
It is easy to see that there is a $p > 0$ such that given
$\F_0$, the probability that $D_{2\cdot 2^{-n} } \geq 2^{-n}$ is
at least $p$.  Iterating this, we see that there exists
$c_2,\alpha'$ such that
\begin{equation}
\label{cone2}
   \Prob\{\tau_n  = n^2 \, 2^{-n}\} \leq
       c_2 \, 2^{-\alpha' \, n^2}.
\end{equation}

Start with a configuration that satisfies $D\ge 2^{-(n+1)}.$
Assume $0 \leq r \leq u_{n+1}$ and hence $0 \leq r + \tau_n \leq
u_n$. Note that on the event $\{\tau_n< n^2 2^{-n}\}$, we have $D_{\tau_n}
\ge 2^{-n}.$ Therefore,
\begin{eqnarray*}
\Prob\left(A_{1-r} \cap J(\frac 12 - r,
       1-r)\right) & \geq &
   \Prob\left(A_{1-r} \cap J(\frac 12 - r,
       1-r); \tau_n < n^2 \, 2^{-n}\right)\\
       & \geq &
   \Prob\left(A_{1-r} \cap J(\frac 12 - r,
       1-r); D_{\tau_n} \ge \, 2^{-n}\right)\\
      & \geq & h_n \, \Prob (A_{1-r};
            \tau_n < n^2 \, 2^{-n}).
\end{eqnarray*}
where the second inequality follows from the definition of $h_n$ in \eqref{hn}.
However, \eqref{cone2} followed by \eqref{cone} imply that
\[   \Prob (A_{1-r};
            \tau_n < n^2 \, 2^{-n})
       \ge \Prob(A_{1-r})-c_2 \, 2^{-\alpha' n^2}
 \ge \Prob(A_{1-r}) \, \left[1 - \frac{c_2}{c_1} \, 2^{n\alpha-n^2 \alpha'}
  \right]. \]
  Let $\delta_n=(c_2/c_1)\, 2^{n\alpha-n^2 \alpha'}$ and then, for all configurations satisfying $D\ge 2^{-(n+1)}$ and  $0 \leq r \leq u_{n+1}$,
\[\frac{\Prob\left(A_{1-r} \cap J(\frac 12 - r,
       1-r)\right)}{\Prob(A_{1-r})} \ge h_n[1-\delta_n]\, .
       \]
Taking infimums, \eqref{cone3} now follows directly from the definition of $h_{n+1}$ in \eqref{hn}.
\end{proof}

The lemma implies that there exists $\rho_2 > 0$ such that for
all $n \geq 0$,
\begin{equation}  \label{sep11}
\q_{n+1} \geq \rho_2 \, \q_n .
\end{equation}
Indeed, it is not difficult to see that there exists $c > 0$
such that
\[  \Prob(A_{n+1} \mid A_n, \overline \gamma_n
  \in \sep)  \geq c , \]
which together with Lemma \ref{sep1} establish \eqref{sep11} for
$n \geq 1$. It is also easy to see that $\q_1 \geq
\tilde c \, \q_0$.

\begin{remark}  A similar argument as above can prove boundary
Harnack inequalities for many domains.  The basic idea is that
if a process is distance $2^{-n}$ from the boundary then, except
for an event of small probability, in a short amount of time it
must either hit the boundary or increase its distance to
$2^{-n+1}$. (This requires some assumptions about the boundary.)
It is important that we have assumed that $K_1, K_2$ are subsets
of $\overline \ball$ and that $w_1, w_2 \in \p \ball$.  This
guarantees that the paths with $D = 2^{-n}$ have a positive
probability of separating to $D= 2^{-n+1},$ without intersecting
by the time they reach radius $1 + O(2^{-n})$.
\end{remark}

\subsection{Proof of Proposition \ref{up2const}}

The separation lemma was the hard work.  The results
in this subsection are not as difficult.  The main goal
is to prove the following lemma.

\begin{lemma}  \label{sep2.alt}
 There exists $\rho_3>0$ such that
if $\og \in \sep$ and $m \geq 0$,
\[       q_m(\og) \geq \rho_3 \, \q_m. \]
\end{lemma}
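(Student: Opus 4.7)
My plan is a two-stage argument: I will first absorb $\og$ into a positive-probability cone event at scale one, then compare the remainder to $\overline q_{m-1}$, and finally use $\overline q_{m-1}\ge\overline q_m$ (which follows from sub-multiplicativity and $\overline q_1<1$) to reach the stated bound.

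For the first stage, I will exploit the separation structure of $\og\in\sep$. By definition, the past curves $\gamma^1,\gamma^2$ lie in the disjoint half-space cones $I(-1/8)$ and $-I(-1/8)$ from radius $e^{-1/2}$ outward, and the endpoints $w_1,w_2$ lie well inside $I(-1/16)$ and $-I(-1/16)$. I will choose slightly enlarged disjoint cones $O_1\supset\gamma^1$ and $O_2\supset\gamma^2$ with prescribed target patches on $\p\ball_1$ near $(\pm e,0,0)$; by Lemma \ref{conelemma} there is $c_1>0$, uniform in $\og\in\sep$, such that with probability at least $c_1$ the Brownian motions $B^1,B^2$ stay in $O_1,O_2$ until times $T_1^1,T_1^2$ and arrive in the target patches. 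On this event $G$, one automatically has $B^1\cap B^2=\emptyset$ (by $O_1\cap O_2=\emptyset$) and $B^j\cap\gamma^k=\emptyset$ for $j\ne k$, so every intersection constraint is discharged during the initial segment.

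For the second stage, the strong Markov property at $(T_1^1,T_1^2)$ and Brownian scaling reduce the task to bounding from below, from a fixed well-separated starting pair $(u_1^\ast,u_2^\ast)$ on $\p\ball$, the joint probability of reaching $\p\ball_{m-1}$ without either path entering $\ball_{-1}$ (the scaled image of $\ball\supset\og$) and without the paths meeting each other. The inner-ball-avoidance costs only a constant factor: by \eqref{jan14.3} the unconditional probability for one path from $\p\ball$ to enter $\p\ball_{-1}$ is $e^{-1}$, and a Harnack comparison shows this estimate survives under the non-intersection conditioning. The non-intersection probability itself, from a fixed well-separated pair, is at least $c\,\overline q_{m-1}$ by a Harnack/coupling argument against a near-optimal starting pair realizing $\overline q_{m-1}$. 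Multiplying these factors with stage one yields $q_m(\og)\ge c_1c_2\,\overline q_{m-1}\ge c_1c_2\,\overline q_m$, and one may take $\rho_3=c_1c_2$.

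The main obstacle is decoupling ``no intersection'' from ``no entry into $\ball_{-1}$'' for the pair of conditioned Brownian motions, since these are correlated a priori. A clean workaround is to extend the cone construction by one more scale (from $\p\ball_1$ to $\p\ball_2$), which simultaneously forces persistence of separation and non-entry into $\ball_{-1}$; from $\p\ball_2$ onward one faces an unconstrained non-intersection problem of length $m-2$, contributing $\asymp\overline q_{m-2}\asymp\overline q_{m-1}$ via \eqref{basictwo} and \eqref{sep11}, and the whole estimate closes with an absolute $\rho_3$.
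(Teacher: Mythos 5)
Your overall strategy (steer the attached motions through disjoint cones for an initial scale, then compare the remainder to $\overline q$ at a slightly smaller scale) is in the spirit of the paper's Proposition \ref{oct15.prop1}, but two steps do not hold up as written. First, the stage-one event is ill-posed: there are no disjoint cones $O_1\supset\gamma^1$, $O_2\supset\gamma^2$ with apex at the origin, because $\sep$ only constrains the curves after their first visits to the spheres of radius $e^{-r}$, $0\le r\le 1/2$; inside $\ball_{-1/2}$ both $\gamma^1$ and $\gamma^2$ emanate from the origin and are completely unconstrained (apart from mutual avoidance), so $\gamma^2$ may come arbitrarily close to the positive $x$-axis at small radii. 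Consequently ``every intersection constraint is discharged'' fails: to guarantee $B^1\cap\gamma^2=\emptyset$ uniformly over $\og\in\sep$ you must keep $B^1$ away from an inner ball, not merely inside a cone through the origin. This is exactly why the paper's event $\Lambda_n$ confines the new paths to $L_{\epsilon}(B^j_0)\setminus\ball_{-\epsilon}$ rather than to cones; your stage one is fixable along those lines, but as stated it is wrong.

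The more serious gap is stage two. You need, from your fixed pair on $\p\ball_1$ (or $\p\ball_2$), a lower bound $c\,\overline q_{m-1}$ for the probability of \emph{simultaneously} (a) mutual non-intersection out to $\p\ball_m$ and (b) avoidance of the region containing $\og$ and your stage-one steering segments (which lie in the shell between radii $1$ and $e$, so avoiding the scaled $\ball_{-1}$ does not handle them). You price (a) and (b) separately, invoking ``a Harnack comparison'' and a claim that the $e^{-1}$ bound of \eqref{jan14.3} ``survives under the non-intersection conditioning''; but the two constraints concern the same pair of paths and are strongly correlated, $q_k(w_1,w_2)$ is not harmonic in the endpoints (the event depends on the entire initial segments, so the Harnack inequality does not apply to it directly), and a pair of paths realizing $\approx\overline q_{m-1}$ is free to re-enter the unit ball and hit $\og$ or the steering tubes. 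The paper's proof supplies precisely this missing ingredient: Lemma \ref{maximal} shows that maximizers of $q_n$ are uniformly well separated; for a maximizing pair, where $q_n(\w)=\overline q_n$ exactly, one can \emph{subtract} the cost of hitting $\overline\ball_{-1}$ (at most $2e^{-1}\overline q_n$ by the strong Markov property) and of hitting thin radial cones (Corollary \ref{jan9.cor1}); Proposition \ref{oct15.prop1} then steers an arbitrary well-separated pair along thin tubes into that maximizing configuration, the tubes being what lets the attached near-optimal paths avoid the steering segments. Note that the subtraction trick works only because the comparison configuration attains $\overline q_n$; starting from an arbitrary fixed pair, as you do, subtracting a constant multiple of $\overline q_n$ yields nothing. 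So the final multiplication $q_m(\og)\ge c_1c_2\,\overline q_{m-1}$ is not justified as it stands: closing it requires essentially the statement and proof of Proposition \ref{oct15.prop1}, which your sketch presupposes rather than proves.
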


By combining Lemmas \ref{sep1} and \ref{sep2.alt}, we see that
for all $n \geq 1$, $m \geq 0$,
\[         \q_{n+m} \geq \rho_1\,
  \rho_3\, \q_{n} \, \q_m  .\]  Hence this establishes \eqref{basic3} for $n \geq 1$, $m \geq 0$. Of course,  \eqref{basic3} follows trivially for $n=0$. By
combining the lemma with
\eqref{basic3} and \eqref{oct12.1} we get the following
corollary.

\begin{corollary}\label{june29.1}
If $\og \in \state$ and $m \geq 1$,
\begin{equation}  \label{basic33}
        \rho_1 \, \rho_3 \,
   q_1(\og)
 \, e^{-\xi (m-1)} \leq q_m(\og)
    \leq  c_* \, q_1(\og)
 \, e^{-\xi (m-1)}
\end{equation}
\end{corollary}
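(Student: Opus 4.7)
The plan is to bracket $q_m(\og)$ by decomposing at the first scale and using three ingredients already in hand: the submultiplicativity estimate \eqref{basic}, the separation lemma (Lemma \ref{sep1}), and the lower bound Lemma \ref{sep2.alt} for separated configurations, combined with the two-sided bound \eqref{basic3} on $\overline q_n$.

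For the upper bound, I would simply apply \eqref{basic} with the decomposition into scales $1$ and $m-1$: this gives $q_m(\og) \leq q_1(\og)\,\overline q_{m-1}$, and \eqref{basic3} then bounds $\overline q_{m-1} \leq c_* e^{-\xi(m-1)}$. This is immediate once we observe that \eqref{basic} holds for the initial configuration $\og$ as well as for point initial conditions; the argument in \eqref{basic} uses only the strong Markov property at time $T_1^1$ and $T_1^2$ together with Brownian scaling.

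For the lower bound, the key move is to condition on the event $A_1(\og)$ and then look at the (random) scaled configuration $\og_1$. By the strong Markov property and Brownian scaling,
\[
 q_m(\og) \;=\; q_1(\og)\,\E\!\left[\,q_{m-1}(\og_1)\;\middle|\;A_1(\og)\,\right].
\]
On the event $\{\og_1 \in \sep\}$, Lemma \ref{sep2.alt} applied to the (rescaled) separated configuration $\og_1$ gives $q_{m-1}(\og_1) \geq \rho_3\, \overline q_{m-1} \geq \rho_3\, e^{-\xi(m-1)}$, using \eqref{basic3}. Dropping the indicator that $\og_1 \notin \sep$ and invoking Lemma \ref{sep1} to get $\Prob(\og_1 \in \sep \mid A_1(\og)) \geq \rho_1$ yields
\[
  \E\!\left[\,q_{m-1}(\og_1)\;\middle|\;A_1(\og)\,\right] \;\geq\; \rho_1\, \rho_3\, e^{-\xi(m-1)},
\]
which multiplied by $q_1(\og)$ gives the desired lower bound.

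The main conceptual step is the lower bound; there is no genuine obstacle here because the separation lemma was designed precisely for this kind of one-step decoupling argument. The only point to be a bit careful about is that Lemma \ref{sep2.alt} applies to configurations in $\sep$, which is why we must run the process one scale forward to produce $\og_1$ before invoking it rather than trying to apply it directly to the arbitrary starting configuration $\og$. The case $m=1$ is trivial from the definitions of the constants (with $\rho_1\rho_3 \leq 1 \leq c_*$), so no separate treatment is required.
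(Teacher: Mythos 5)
Your proposal is correct and follows essentially the same route as the paper, which obtains the corollary precisely by combining Lemma \ref{sep2.alt} with the two-sided bound \eqref{basic3} and the separation estimate \eqref{oct12.1}: you have simply written out the one-scale decomposition $q_m(\og)=q_1(\og)\,\E[q_{m-1}(\og_1)\mid A_1(\og)]$ that the paper leaves implicit. The handling of the upper bound via $q_{m-1}(\og_1)\le \overline q_{m-1}\le c_*e^{-\xi(m-1)}$ and of the trivial case $m=1$ is also as intended.
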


We now proceed with the proof of Lemma \ref{sep2.alt}.
Recall \eqref{oct15.1} and \eqref{oct15.2}.

\begin{lemma}\label{maximal}
There exists $C_3 < \infty$ such that if $w_1,w_2 \in \p
\ball$ and $n \geq 1$,
\[      q_n(w_1,w_2) \leq C_3 \, |w_1-w_2|^{\xi/2} \,
    \q_n . \]
In particular, there exists $C_4 > 0$ such that for
each $n$, there exists $\w = (w_1,w_2) \in \overline
\ball^2$ with $|w_1-w_2| \geq C_4$ and
\[       q_n(w_1,w_2)  = \q_n.\]
\end{lemma}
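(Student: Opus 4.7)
The second statement follows immediately from the first: since $\mathbf{w} \mapsto q_n(\mathbf{w})$ is continuous on the compact set $\overline{\ball}^{\,2}$, it attains its supremum $\overline q_n$ at some $\mathbf{w}^* = (w_1^*, w_2^*) \in \p\ball^2$, and the first inequality then yields $\overline q_n \leq C_3 |w_1^* - w_2^*|^{\xi/2}\, \overline q_n$, so $|w_1^*-w_2^*| \geq C_3^{-2/\xi} =: C_4$. So my plan focuses on proving the first inequality.

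Set $\epsilon := |w_1 - w_2|$; the bound is trivial when $\epsilon$ is bounded below, so I assume $\epsilon$ is small. Let $m := (w_1+w_2)/2$, $\delta := \epsilon^{1/2}$, and $\sigma^j := \inf\{t : B^j_t \notin B(m,\delta)\}$. The plan is to decompose the event $A_n$ at these stopping times. Writing $K_j := B^j[0,\sigma^j]$, $\mathbf{w}' := (B^1_{\sigma^1}, B^2_{\sigma^2})$, and $G := \{B^1[0,\sigma^1]\cap B^2[0,\sigma^2] = \emptyset\}$, the strong Markov property applied separately to the two independent paths gives
\[
 q_n(\mathbf{w}) = \E^{\mathbf{w}}\!\left[\mathbf{1}_G\, \Prob^{\mathbf{w}'}\!\left(A_n(K_1,K_2)\right)\right]
   \leq \Prob^{\mathbf{w}}(G)\cdot \sup_{|\mathbf{w}'|\leq 1+\delta} q_n(\mathbf{w}').
\]
To control the right factor, note that for $|\mathbf{w}'| \leq 1+\delta < e$ we have $T_1^j > 0$ and $A_n \subset A_{1,n}$; by a Markov-plus-scaling step at time $T_1^j$, $\Prob^{\mathbf{w}'}(A_{1,n}) \leq \overline q_{n-1}$, which by \eqref{sep11} (or Proposition \ref{up2const}) is $\leq C\, \overline q_n$.

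The key estimate is $\Prob^{\mathbf{w}}(G) \leq C\,\epsilon^{\xi/2}$, which I obtain by a scaling reduction to Proposition \ref{up2const}. Applying the affine map $\phi(x) = (2/\epsilon)(x - m)$ together with the corresponding time rescaling, $\phi(w_1)$ and $\phi(w_2)$ are antipodal points of $\p\ball$, and $\phi(B(m,\delta)) = B(0,\, 2e^{k/2})$ with $k := -\log\epsilon$. By Brownian scale-invariance, the event $G$ is mapped to the non-intersection event for two Brownian motions starting on $\p\ball$ and running up to $\p\ball_{k/2 + \log 2}$, so
\[
 \Prob^{\mathbf{w}}(G) = q_{k/2+\log 2}(\phi(w_1),\phi(w_2)) \leq \overline q_{k/2+\log 2} \leq c_*\, e^{-\xi(k/2+\log 2)} = c_*\, 2^{-\xi}\, \epsilon^{\xi/2}.
\]
Combining the two bounds gives $q_n(\mathbf{w}) \leq C_3\,\epsilon^{\xi/2}\, \overline q_n$.

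The main technical point is the scaling argument for $\Prob^{\mathbf{w}}(G)$: one has to verify that the midpoint-centered dilation really transforms the local non-intersection event into a standard $q_k$-event with starting points on $\p\ball$, so that the sharp form of Proposition \ref{up2const} applies to yield the exponent $\xi/2$. The remaining bookkeeping — ensuring $\sigma^j < T_n^j$ so that the decomposition makes sense (straightforward for $\delta$ small and $n \geq 1$), and the one-step Harnack/Markov comparison controlling $q_n(\mathbf{w}')$ for $\mathbf{w}'$ slightly outside $\overline\ball$ — is routine.
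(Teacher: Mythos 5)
Your reduction of the second assertion to the first (continuity, compactness, and $\overline q_n \le C_3|w_1^*-w_2^*|^{\xi/2}\,\overline q_n$ forcing $|w_1^*-w_2^*|\ge C_3^{-2/\xi}$) matches the paper, and your decomposition of $A_n$ at the exit of a ball around the two close starting points, together with the use of \eqref{sep11} to pass from $\overline q_{n-1}$ back to $\overline q_n$, is also sound and close in spirit to the paper's argument. The genuine gap is in the key estimate $\Prob^{\w}(G)\le C\,\epsilon^{\xi/2}$: you derive it from the sharp upper bound $\overline q_m\le c_*\,e^{-m\xi}$ of Proposition \ref{up2const}, but that bound is not available at this point. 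Lemma \ref{maximal} is itself a step in the paper's proof of Proposition \ref{up2const}: it feeds into the (unnumbered) lemma on the events $E_n$, then Corollary \ref{jan9.cor1}, Proposition \ref{oct15.prop1}, and Lemma \ref{sep2.alt}, which together with the separation lemma give the supermultiplicativity \eqref{super} and hence \eqref{basic3}. Invoking Proposition \ref{up2const} here is therefore circular. Moreover, your choice $\delta=\epsilon^{1/2}$ makes the sharp exponent essential: the only a priori information, $\overline q_m\approx e^{-m\xi}$ (which yields, say, $\overline q_m\le c\,e^{-m\xi/2}$), applied over your window of $k/2+\log 2$ scales gives only $\epsilon^{\xi/4}$, which is weaker than the claimed bound.

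The repair is small and is exactly why the lemma is stated with exponent $\xi/2$ rather than $\xi$: localize at a ball of \emph{constant} radius instead of radius $\epsilon^{1/2}$. With, say, $\delta=1/4$, your map $\phi$ turns $G$ into a non-intersection event over about $k=-\log\epsilon$ scales, so $\Prob^{\w}(G)\le \overline q_{\,k-\log 2}\le c\,e^{-(k-\log 2)\xi/2}\le c'\,\epsilon^{\xi/2}$, using only the subadditivity relation $\overline q_m\approx e^{-m\xi}$; the rest of your argument then goes through unchanged. This is essentially the paper's proof, which factors $q_n(w_1,w_2)\le q_1(w_1,w_2)\,\overline q_{n-1}\le \rho_2^{-1}\,q_1(w_1,w_2)\,\overline q_n$ via \eqref{sep11} and bounds $q_1(w_1,w_2)\le \overline q_s\le c\,e^{-s\xi/2}=c\,|w_1-w_2|^{\xi/2}$, where $e^{-s}=|w_1-w_2|$, by scaling in the unit ball around $w_1$.
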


\begin{proof} If $| w_1 - w_2 |\ge 1$, the inequality follows trivially. So let us write $|w_1-w_2|
 = e^{-s}$. Using \eqref{sep11},
 \[  q_n(w_1,w_2) \leq q_1(w_1,w_2) \, \q_{n-1}
  \leq \rho_2^{-1} \,  \, q_1(w_1,w_2) \, \q_n.\]
Since the ball of radius $1$ about $w_1$ is contained
in $\ball_1$, we can see by scaling that
\[  q_1(w_1,w_2) \leq \q_{s} \leq
   c \, e^{-s\xi/2} = c \, |w_1-w_2|^{\xi/2},\]
where the second  inequality follows from the relation
$\q_n \approx e^{-n\xi}$.
To prove the last
assertion in the lemma, choose $C_4$ such that it satisfies
$C_3 \, C_4^{\xi/2} < 1$ and note that existence of a pair $(w_1,w_2) \in \p \ball^2$ which maximizes $q_n$ was already proved in the introduction.
\end{proof}

\begin{lemma}
Let $E_n^j$ be the event $\{W^j[0,T_n^j] \cap  \overline
\ball_{-1}
 = \emptyset\}$ and $E_n = E_n^1 \cap E_n^2$.  Then
for every $n$, there exists $\w=(w_1,w_2) \in \p \ball^2$
with $|w_1-w_2| \geq C_4$ and
\[   \Prob^{\w}(A_n \cap E_n) \geq
        (1 - 2{e^{-1}}) \, \q_n.\]
\end{lemma}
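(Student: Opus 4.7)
The plan is to combine Lemma~\ref{maximal} with a strong Markov argument applied separately to each Brownian motion. Using Lemma~\ref{maximal}, pick $\w=(w_1,w_2)\in\p\ball^2$ with $|w_1-w_2|\ge C_4$ and $q_n(\w)=\overline q_n$. For this $\w$ it is enough to prove
\[ \Prob^\w(A_n\cap(E_n^j)^c)\;\le\; e^{-1}\,\overline q_n\qquad(j=1,2), \]
because then a union bound together with $\Prob^\w(A_n)=\overline q_n$ yields $\Prob^\w(A_n\cap E_n)\ge(1-2e^{-1})\overline q_n$, as required.

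By the symmetric roles of $B^1$ and $B^2$, I need only treat $j=1$. Introduce the $B^1$-stopping times $\sigma=\inf\{t:|B^1_t|=e^{-1}\}$ and $\tau=\inf\{t\ge\sigma:|B^1_t|=1\}$; both are a.s.\ finite by transience of three-dimensional Brownian motion. Then $(E_n^1)^c=\{\sigma<T_n^1\}$, and on this event $B^1$ must cross $\p\ball$ to leave $\overline\ball_{-1}$ on its way to $\p\ball_n$, so $\sigma\le\tau\le T_n^1$. Apply the strong Markov property for $B^1$ at $\tau$: the shifted path $\tilde B^1_s:=B^1_{\tau+s}$ is a Brownian motion from $B^1_\tau\in\p\ball$, independent of $(B^1_t)_{t\le\tau}$ and of $B^2$. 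Since $A_n$ forces $\tilde B^1[0,T_n^1-\tau]\cap B^2[0,T_n^2]=\emptyset$, discarding the analogous constraint on $B^1[0,\tau]$ gives
\[ \Prob^\w(A_n\cap(E_n^1)^c)\;\le\;\E^\w\bigl[\mathbf{1}_{(E_n^1)^c}\,p(B^1_\tau,\,B^2[0,T_n^2])\bigr], \]
where $p(x,K):=\Prob^x\{B[0,T_n]\cap K=\emptyset\}$ is the probability that a Brownian motion from $x$ reaches $\p\ball_n$ without hitting $K$.

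The final step is to integrate over $B^2$ first. By independence of $B^1$ and $B^2$ and Fubini,
\[ \E^{w_2}\bigl[\,p(x,B^2[0,T_n^2])\,\bigr]\;=\;q_n(x,w_2)\;\le\;\overline q_n\qquad\text{for every }x\in\p\ball, \]
so the right-hand side displayed above is at most $\overline q_n\cdot\Prob^{w_1}((E_n^1)^c)$, and \eqref{jan14.3} gives $\Prob^{w_1}\{B^1[0,\infty)\cap\p\ball_{-1}\ne\emptyset\}=e^{-1}$, hence $\Prob^{w_1}((E_n^1)^c)\le e^{-1}$. The main (mild) obstacle is the bookkeeping of conditional expectations when the strong Markov property is applied to $B^1$ alone while $B^2$ is still running; conditioning on the $B^1$-history first, then integrating over $B^2$, handles this cleanly.
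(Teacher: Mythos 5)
Your proof is correct and takes essentially the same route as the paper: pick the maximizing pair from Lemma \ref{maximal}, bound $\Prob^{\w}(A_n\cap(E_n^j)^c)$ by applying the strong Markov property at the first return of $B^j$ to $\p\ball$ after hitting $\overline\ball_{-1}$ (the paper's $\rho,\sigma$ are your $\sigma,\tau$), bound the post-return non-intersection probability by $\overline q_n$, use \eqref{jan14.3} to get the factor $e^{-1}$, and conclude with a union bound. The only blemish is the parenthetical claim that $\sigma$ is a.s.\ finite (a Brownian motion started on $\p\ball$ reaches $\p\ball_{-1}$ only with probability $e^{-1}$), but this is harmless since all your estimates are restricted to the event $(E_n^1)^c$, on which $\sigma<\tau<T_n^1<\infty$.
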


\begin{proof}  Choose $(w_1,w_2)$ with $|w_1 - w_2| \geq
C_4$ and $q_n(w_1,w_2) = \q_n$ as in Lemma \ref{maximal}.
Using \eqref{jan14.3}, we see that
if  $w_j \in \p \ball$,
\[ \Prob^{w_j}[(E_n^j)^c] \leq
  \Prob^{w_j}\{W^j[0,\infty) \cap  \overline
\ball_{-1}  \neq
  \emptyset \}   = e^{-1} . \]
Let $\rho$ be the first time
that $W^1$ visits $ \overline \ball_{-1}$ and
$\sigma$ the first time greater than $\rho$ that
$W^1$ is on $\p \ball$.  Then,
\[
 \Prob^{\w}(A_n \cap (E_n^1)^c)
  =  \Prob^{\w} \{\rho < T_n^1\}
 \, \Prob^{\w} \{
  W^1[\sigma,T_n^1] \cap W^2[0,T_n^2] = \emptyset
  \mid \rho < T_n^1\}
  \leq  e^{-1} \, \q_n.
\]
The same holds for $E_n^2$ and hence for this choice of
$\w=(w_1,w_2)\in \overline \ball^2$,
\begin{equation*}\Prob^{\w}(A_n \cap E_n)
  \geq
  % q_n(w_1,w_2) -  2e^{-1} \, \q_n =
(1- 2e^{-1})   \q_n.
\qedhere\end{equation*}
\end{proof}

If $w \in \p \ball$, let
\[    L_\epsilon(w) = \left\{z \in \R^3:
  |z| \leq e, \;\;\; \left| \frac{z}{|z|} - w
  \right| \leq \epsilon \right\}.\]
In other words, $L_\epsilon(w)$ is a cone centered
around the line segment from $0$ to $ew$.
Three-dimensional Brownian motions do not hit
line segments.  Using this fact, the next lemma  and
corollary are
almost immediate; we omit the proofs.

\begin{lemma}  For every $\delta > 0$, there exists $
\epsilon > 0$ such that if $\w = (w_1,w_2) \in
\p \ball^2$ with $|w_1 - w_2|\geq C_4$, then
\[ \Prob^{w_2} \left\{W^2[0,\infty) \cap
     L_{\epsilon}(w_1) \neq \emptyset \right\}
    \leq \delta.\]
\end{lemma}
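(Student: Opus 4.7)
The plan is to exploit the fact that line segments are polar for three-dimensional Brownian motion, then upgrade the resulting pointwise smallness to uniform smallness in the pair $(w_1,w_2)$ via compactness.

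First I would fix a pair $(w_1,w_2)\in\p\ball^2$ with $|w_1-w_2|\ge C_4$ and write $\ell(w_1):=\{tw_1:0\le t\le e\}$ for the line segment to which $L_\epsilon(w_1)$ collapses as $\epsilon\downarrow 0$. An elementary computation shows $\dist(w_2,\ell(w_1))\ge d_0>0$ for some $d_0=d_0(C_4)$: parametrizing as $|w_2-tw_1|^2=1-2t\,(w_1\cdot w_2)+t^2$ and minimizing over $t\in[0,e]$ yields a quadratic lower bound that is strictly positive whenever $w_1\cdot w_2\le 1-C_4^2/2$. Hence for all sufficiently small $\epsilon$ (depending only on $C_4$), $\dist(w_2,L_\epsilon(w_1))\ge d_0/2$.

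Since $\ell(w_1)$ lies on a line and lines are polar for three-dimensional Brownian motion, $\Prob^{w_2}\{B^2[0,\infty)\cap\ell(w_1)\ne\emptyset\}=0$. The sets $L_\epsilon(w_1)$ decrease to $\ell(w_1)$ as $\epsilon\downarrow 0$, so monotone convergence gives
\[\lim_{\epsilon\to 0}\Prob^{w_2}\{B^2[0,\infty)\cap L_\epsilon(w_1)\ne\emptyset\}=0\]
for each admissible pair $(w_1,w_2)$.

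To obtain uniformity, observe that $\K:=\{(w_1,w_2)\in\p\ball^2:|w_1-w_2|\ge C_4\}$ is compact, and for each fixed $\epsilon>0$ the map $(w_1,w_2)\mapsto\Prob^{w_2}\{B^2[0,\infty)\cap L_\epsilon(w_1)\ne\emptyset\}$ is continuous on $\K$: $L_\epsilon(w_1)$ varies continuously in the Hausdorff metric with $w_1$, $\dist(w_2,L_\epsilon(w_1))$ stays bounded away from zero on $\K$, and Brownian hitting probabilities of compact sets depend continuously on the starting point when bounded away from the set. Since these continuous functions decrease monotonically to $0$ pointwise on $\K$ as $\epsilon\downarrow 0$, Dini's theorem forces uniform convergence, which is exactly the claim. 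The only mildly delicate point is the uniform distance bound, which is the geometric input that makes the whole scheme work; an alternative purely quantitative route bounds the hitting probability by $\Cp(L_\epsilon(w_1))/[4\pi\,\dist(w_2,L_\epsilon(w_1))]$ and uses that $\Cp(L_\epsilon(w_1))\to 0$ as $\epsilon\downarrow 0$ by rotational symmetry and the polarity of the limiting line segment.
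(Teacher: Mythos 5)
Your argument is correct and is exactly the elaboration the paper has in mind: the paper omits the proof, noting only that three-dimensional Brownian motion does not hit line segments, and your proof upgrades that polarity statement to the required uniform bound (via the uniform distance bound $d_0(C_4)$ together with Dini's theorem, or, even more directly, via the capacity bound, where rotational invariance makes $\Cp(L_\epsilon(w_1))$ independent of $w_1$ so the estimate is automatically uniform). Note only that continuity in $(w_1,w_2)$ is best justified not by Hausdorff continuity of the cones but by rotational invariance, which reduces it to continuity of the hitting probability of a fixed compact set in the starting point away from that set.
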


\begin{corollary}  \label{jan9.cor1}
There exists $\epsilon_1 > 0$ such that
the following is true.  Let $U_n = U_{n,\epsilon_1}$ be
the event that
\[       W^j[0, T_n^j] \cap L_{\epsilon_1}(W^{3-j}_0)
  = \emptyset, \;\;\;\; j=1,2 . \]
Then for every $n$, there exists $\w = (w_1,w_2) \in
\p \ball^2$ with $|w_1-w_2| \geq C_4$ such that
\[   \Prob^{\w}(A_n \cap E_n \cap U_n) \geq \frac{1 - 2e^{-1}}{2}
  \, \q_n.\]
\end{corollary}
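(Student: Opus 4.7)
The plan is to combine the cone-avoidance lemma just stated with the lemma immediately preceding it. First I would observe that by symmetry (applying the cone-avoidance lemma with the roles of $w_1$ and $w_2$ interchanged), for any $\delta > 0$ one obtains $\epsilon_1 > 0$ such that, uniformly over $\w = (w_1, w_2) \in \p\ball^2$ with $|w_1 - w_2| \geq C_4$,
\[
\Prob^{w_j}\bigl(B^j[0,\infty) \cap L_{\epsilon_1}(w_{3-j}) \neq \emptyset\bigr) \leq \delta, \quad j = 1, 2.
\]
Then, for each $n$, I would invoke the preceding lemma to choose $\w = \w(n) \in \p\ball^2$ with $|w_1 - w_2| \geq C_4$ and $\Prob^{\w}(A_n \cap E_n) \geq (1 - 2e^{-1})\,\overline q_n$.

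The next step is to reduce the corollary to the refined estimate
\[
\Prob^{\w}(A_n \cap U_n^c) \leq c\,\delta\,\overline q_n,
\]
which, together with the above, gives $\Prob^{\w}(A_n \cap E_n \cap U_n) \geq (1 - 2e^{-1} - c\delta)\,\overline q_n$; the corollary then follows upon calibrating $\delta$ so that $c\delta \leq (1-2e^{-1})/2$. To prove the refined estimate I would write $U_n^c = U_n^{c,1} \cup U_n^{c,2}$ with $U_n^{c,j} = \{B^j[0, T_n^j] \cap L_{\epsilon_1}(w_{3-j}) \neq \emptyset\}$ and handle $U_n^{c,2}$, the other term being symmetric. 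Letting $\sigma = \inf\{t : B_t^2 \in L_{\epsilon_1}(w_1)\}$, on $\{\sigma < T_n^2\}$ the point $B_\sigma^2$ lies within angular distance $\epsilon_1$ of $w_1$, hence close to the starting point of $B^1$. I would then apply the strong Markov property at $\sigma$: the probability of non-intersection of $B^2[\sigma, T_n^2]$ with $B^1[0, T_n^1]$, conditional on $\F_\sigma$, is controlled by a two-sided non-intersection probability whose starting points lie close to each other, and Lemma~\ref{maximal} combined with Brownian scaling bounds this conditional probability by $c\,\overline q_n$. Multiplying by $\Prob^{\w}(\sigma < T_n^2) \leq \delta$ then yields the displayed estimate.

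The hard part is precisely this refinement: passing from the naive union bound $\Prob^{\w}(U_n^c) \leq 2\delta$ (insufficient because $\overline q_n \to 0$, so a bound of order $\delta$ alone would be useless for large $n$) to the conditional-on-$A_n$ improvement of order $\delta\,\overline q_n$. Making the strong Markov step uniform over the full range of positions that $B_\sigma^2$ can attain on the thin cone — in particular across all radii $|B_\sigma^2|$ lying in roughly $[e^{-1}, e]$ once $E_n$ is imposed — requires a scale-invariant non-intersection estimate, which Lemma~\ref{maximal} (together with Brownian scaling) supplies.
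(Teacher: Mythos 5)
Your proposal is correct, and it supplies precisely the step the paper dismisses as ``almost immediate'': you rightly observe that the naive bound $\Prob^{\w}(U_n^c)\le 2\delta$ is useless for large $n$ (since $\overline q_n\to 0$) and that one needs the refined estimate $\Prob^{\w}(A_n\cap U_n^c)\le c\,\delta\,\overline q_n$, obtained via the strong Markov property at the first hitting time of the cone; combined with the preceding lemma and a suitable choice of $\delta$, this gives the corollary. Two small inaccuracies in your justification of the conditional bound, neither of which affects the argument. First, $B^2_\sigma$ need not be close to $w_1$: the cone $L_{\epsilon_1}(w_1)$ extends out to radius $e$, so only the angular coordinate of $B^2_\sigma$ is controlled (and in your decomposition you have not intersected with $E_n$, so the radius can also be small); closeness of the starting points is neither true nor needed. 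Second, Lemma \ref{maximal} is not quite the right tool, since it requires both points on $\p\ball$ and its distance factor would only be bounded by a constant here. What you actually need is the elementary uniform bound: for any starting pair in $\overline \ball_1$, the non-intersection probability up to $\p\ball_n$ is at most $\overline q_{n-1}$ (run both motions until they exit $\ball_1$, then apply the strong Markov property and Brownian scaling), and then $\overline q_{n-1}\le \rho_2^{-1}\,\overline q_n$ by \eqref{sep11}. Since \eqref{sep11} is established before this point, there is no circularity with Proposition \ref{up2const}, and with this substitution your proof goes through verbatim.
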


\begin{proposition}  \label{oct15.prop1}
For every $\epsilon > 0$ there is
a $c_\epsilon > 0$ such that the following is true.
Suppose $\w = (w_1,w_2) \in \p \ball^2$ with
$|w_1-w_2| \geq \epsilon$.  Let $\Lambda_n = \Lambda_{n,\epsilon}$
denote the event
\[    \Lambda_n = \left\{
 W^j[0,T_n^j] \cap \ball_1 \subset
        L_{\epsilon}(W^j_0) \setminus \ball_{-\epsilon}\right\} . \]
Then
\[       \Prob^{\w}(A_n \cap \Lambda_n) \geq c_\epsilon \,
   \q_n. \]
\end{proposition}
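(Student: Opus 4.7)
The proof combines the one-point lower bound of Corollary \ref{jan9.cor1} with a cone-steering argument. Corollary \ref{jan9.cor1} supplies, for each $n$, a specific $\w^* \in \p\ball^2$ with $|w_1^*-w_2^*|\geq C_4$ and $\Prob^{\w^*}(A_n\cap E_n\cap U_n)\geq c_0\,\overline q_n$. The plan is first to upgrade this to include the full cone constraint $\Lambda_n$ at $\w^*$, and then to transfer the resulting bound to an arbitrary $\w$ with $|w_1-w_2|\geq\epsilon$.

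For the upgrade at $\w^*$: the events $E_n$ and $U_n$ already push the paths away from $\overline\ball_{-1}\supset\ball_{-\epsilon}$ and out of the narrow $\epsilon_1$-cones around the opposite starting points, so there is a lot of room to impose more. I would further force each $B^j[0,T_2^j]$ to stay in a narrow subcone of $L_\epsilon(w_j^*)\setminus\ball_{-\epsilon}$ by Lemma \ref{conelemma}, and then use Lemma 2.3, rescaled so as to start from $\p\ball_2$, to keep each $B^j$ out of $\ball_1$ for all $t\geq T_2^j$ (probability $\geq 1-e^{-1}$). These extra constraints interact with $A_n\cap E_n\cap U_n$ only through the strong Markov property at time $T_2^j$, so they together cost only a constant factor, giving $\Prob^{\w^*}(A_n\cap\Lambda_n)\geq c_1\,\overline q_n$.

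For a general $\w=(w_1,w_2)$ with $|w_1-w_2|\geq\epsilon$: I would run each $B^j$ through a narrow subcone of $L_\epsilon(w_j)\setminus\ball_{-\epsilon}$ until time $T_2^j$ and simultaneously prevent re-entry to $\ball_1$ afterwards. This joint event has probability $\geq c_2(\epsilon)>0$ by Lemmas \ref{conelemma} and 2.3, it makes $\Lambda_n$ automatic on the portion of each path lying in $\ball_1$, and it ensures non-intersection in $\ball_2$ because the two subcones are disjoint once $|w_1-w_2|\geq\epsilon$. By strong Markov at $T_2^j$ and Brownian scaling, the remaining non-intersection event up to $\p\ball_n$ reduces to $A_{n-2}$ for Brownian motions started from the rescaled exit configuration $\tilde\w=(e^{-2}B^1(T_2^1),e^{-2}B^2(T_2^2))\in\p\ball^2$, which has separation $\geq\epsilon/2$. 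Combining the steering bound with a lower bound $\Prob^{\tilde\w}(A_{n-2}\cap E_{n-2}\cap U_{n-2})\geq c\,\overline q_{n-2}$ uniform in $\tilde\w$, plus $\overline q_{n-2}\asymp\overline q_n$ obtained by iterating \eqref{sep11}, closes the argument.

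The main obstacle is precisely this uniform lower bound at $\tilde\w$: Corollary \ref{jan9.cor1} gives the bound at one specific configuration, not at every $\tilde\w$ with separation $\geq\epsilon/2$. To bridge this gap I would invoke a Harnack-type comparison. The function $\w\mapsto\Prob^\w(A_m\cap E_m\cap U_m)$ extends to a positive harmonic function in the interior of $\ball_m^2$, and the set of admissible boundary configurations is compact modulo the diagonal action of rotations; a Harnack chain then transfers the one-point bound from the $\w^*$ of Corollary \ref{jan9.cor1} to every such $\tilde\w$ at the cost of an $\epsilon$-dependent constant. Executing this boundary-Harnack step cleanly, while respecting the dependence of $U_m$ on the starting points, is the technically most delicate part of the argument.
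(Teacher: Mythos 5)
There is a genuine gap, and it sits exactly where you flag it: the uniform lower bound $\Prob^{\tilde\w}(A_{n-2}\cap E_{n-2}\cap U_{n-2})\geq c\,\overline q_{n-2}$ over \emph{all} exit configurations $\tilde\w\in\p\ball^2$ with separation $\geq\epsilon/2$. Your proposed Harnack bridge does not work. First, the map $\w\mapsto\Prob^{\w}(A_m)$ is \emph{not} a harmonic function of the starting points: the non-intersection event couples the two paths from time zero, so the mean-value property fails (when you average over a small sphere around $w_1$, the pre-exit piece of $B^1$ must still avoid all of $B^2[0,T_m^2]$). This is precisely why the paper applies the Harnack inequality only to $\Prob^\w(A_{1,n+1})$ in \eqref{basictwo}, an event depending on the paths only after they leave $\ball_1$. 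Second, $E_m\cap U_m$ (and any added cone constraint) is defined in terms of the starting points themselves, so at different $\w$ you are comparing probabilities of \emph{different} events, not one positive harmonic function at two points; freezing the cones at the maximizer produces a function that vanishes at many target configurations $\tilde\w$, and quotienting by rotations does not help because $(\tilde w_1,\tilde w_2)$ is in general not a rotation of the maximizing pair (the angular separations differ). A related circularity appears already in your first step: after forcing the initial pieces into subcones up to $\p\ball_2$, the claim that the constraints "cost only a constant factor via strong Markov at $T_2$" again requires a lower bound of order $\overline q_n$ from the new, arbitrary configuration on $\p\ball_2^2$, which is the very statement being proved.

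The paper's (sketched) proof avoids any uniform-over-configurations input by reversing the reduction: it proves the statement with $n+4$ in place of $n$, and steers \emph{from} the arbitrary $\w$ \emph{to} a scaled copy of the single good configuration. Concretely, let $(z_1,z_2)$ be the maximizing pair of Corollary \ref{jan9.cor1} for $n$; force the two motions to follow thin tubes along the segments from $w_j$ to $e^2w_j$ and then to $e^4z_j$ (interchanging $z_1,z_2$ if the tubes would come close), which by Lemma \ref{conelemma}-type estimates costs only a constant $c(\epsilon)>0$ and keeps the portions inside $\ball_1$ within $L_\epsilon(w_j)\setminus\ball_{-\epsilon}$. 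Then attach, at scale $e^4$, the paths of Corollary \ref{jan9.cor1} itself: its event $E_n$ keeps the attached paths outside $\ball_3$, so they never re-enter $\ball_1$ and $\Lambda_{n+4}$ is decided by the tube portions alone, while $U_n$ keeps each attached path away from the other tube, preserving non-intersection. This gives $\Prob^\w(A_{n+4}\cap\Lambda_{n+4})\geq c_\epsilon\,\overline q_n$, and \eqref{sep11} converts $\overline q_n$ into a constant times $\overline q_{n+4}$. If you replace your Harnack step by this "steer to the one good endpoint pair at a larger radius" device, the rest of your outline goes through.
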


\begin{proof}
We will not discuss the entire proof.  First we will
prove the result for $n+4$.  Start with $w_1,w_2$ and
consider the line segments to $e^2w_1, e^2 w_2$.  Let
$z_1,z_2$ be maximizers for $n$ for Corollary
\ref{jan9.cor1} and take line segments
from $e^{2}w_1$ to $e^4z_1$ and $e^{2}w_2$ to $e^4z_2$.
(If these intersect or get very close, interchange $z_1$
and $z_2$.)  We now consider the event that Brownian motions
 start at $w_1,w_2$ and follow  these  lines very closely
until they reach $e^4z_1,e^4z_2$.  After this we attach
paths as in Corollary \ref{jan9.cor1}.  We leave the
details to the reader.
\end{proof}

\begin{proof}[Proof of Lemma \ref{sep2.alt}]
Choose $\epsilon = 1/100$ (or any other sufficiently small
number) in the previous proposition and
note that if $\og \in \sep$, then
$A_n \cap \Lambda_n \subset A_n(\og)$.  We choose
$\rho_3 = c_{1/100}.$
\end{proof}

\section{Proof of Theorem \ref{main2}}\label{sec:coupling}

It suffices to prove Theorem \ref{main2} for integers $n$.
We will use upper case $N$ rather than $n$ for the index in
the statement of the theorem.  We restate the result in
terms of coupling.

\begin{theorem} [Equivalent form of Theorem \ref{main2}]
\label{restate}
There exist $0 < c,\beta < \infty$ such that for all positive
integers $N$ and all $\og,\og' \in \state$, we can define
$\og_N,\og_N'$ on the same probability space $(\Omega,
\F,\Prob)$ such that
$\og_N$ has the distribution $\mu_N(\og)$, $\og_N'$
has the distribution $\mu_N(\og')$, and
\[  \Prob\left\{\og_N =_{N/2} \ogp_{N} \right\}
        \geq 1 - ce^{-\beta N}. \]
\end{theorem}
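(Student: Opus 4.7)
The plan is to build the coupling inductively over scales. I work in blocks of length $k_0$ (a large constant to be chosen), and at each block I attempt to ``merge'' the two extensions: a successful attempt at scale $n$ means the Brownian motion extensions for $\og$ and $\og'$ agree from $\p\ball_{n+k_0}$ onwards in the unscaled picture, so that they trace the same curve from $\p\ball_{n+k_0}$ to $\p\ball_N$. Since $\pi_{N/2}\og_N$ depends only on the portion of the scaled path outside $\ball_{-N/2}$, i.e., on the unscaled portion outside $\ball_{N/2}$, any successful attempt at a scale $n$ with $n+k_0\le N/2$ produces $\og_N =_{N/2} \ogp_N$. So the task is to show that with probability $1-ce^{-\beta N}$ at least one of the $\Theta(N/k_0)$ attempts in $[0,N/2]$ succeeds.

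The basic attempt at scale $n$ combines three ingredients. First, by the Separation Lemma (Lemma~\ref{sep1}) applied to each process, with $\mu_N$-conditional probability at least $\rho_1^2$ we have $\og_n,\ogp_n\in\sep$, so the pasts lie in opposite half-spaces $\pm I(-1/8)$ and the endpoints are well-separated on $\p\ball$. Second, when both configurations are in $\sep$, I claim the conditional law of the next $k_0$ Brownian-motion increments has a density with respect to Wiener measure that is bounded above and below by constants depending only on $k_0$: the upper bound comes from the trivial inequality $q_{N-n-k_0}(\og_{n+k_0})\le \overline q_{N-n-k_0}$, and the lower bound from Lemma~\ref{sep2.alt} combined with Corollary~\ref{june29.1}, which replaces the $A_N$-conditioning by a bounded $A_{k_0}$-type factor up to constants. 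Third, the Coupling Lemma for Brownian motion lets me build, from the endpoints $(w_1,w_2)$ of $\og_n$ and $(w'_1,w'_2)$ of $\ogp_n$, Brownian motions $B^j$ and $\tilde B^j$ on a common probability space that agree from $\p\ball_{k_0}$ onward with probability $\ge 1-ce^{-k_0}$; choosing $k_0$ large makes this at least $7/8$.

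To combine these into a single attempt, I define the coupled Brownian extensions via the Coupling Lemma and ask for the intersection of three events: the coupling event (agreement past $\p\ball_{k_0}$); the non-intersection event $A_{k_0}$ for the $\og$ process; and the non-intersection event $A_{k_0}$ for the $\ogp$ process. Because $\og_n,\ogp_n\in\sep$, the Cone Lemma (Lemma~\ref{conelemma}) shows each Brownian pair stays inside its ``safe'' cones (avoiding the opposite-half-space past) with positive probability, and one checks that the maximally coupled paths can be arranged to do so simultaneously --- the coupling only forces a small perturbation once the paths are already deep inside their safe cones. By the density bounds from ingredient two, the attempt therefore succeeds with some probability $\rho>0$ under the true conditional law $\mu_N$, uniformly in $n$, $N$, $\og$, and $\ogp$. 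Taking $\lfloor N/(2k_0)\rfloor$ disjoint blocks and iterating, the probability of no successful attempt is at most $(1-\rho)^{N/(2k_0)} = e^{-\beta N}$, which proves the theorem.

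The main obstacle is the third step: ensuring that the Brownian-motion coupling and the two distinct non-intersection conditionings are mutually compatible, so that the triple intersection has probability $\ge \rho>0$ independently of the initial configurations. This is where the geometry of $\sep$ does the work: because the pasts of $\og$ and $\og'$ lie in opposite half-spaces, the two ``safe cone'' events are essentially complementary restrictions on each Brownian path, both satisfied with positive probability by the Cone Lemma; and because the coupling forces agreement only at a single point on $\p\ball_{k_0}$ far from either past cone, it does not conflict with either safe-cone requirement. The change-of-measure from the auxiliary $A_{k_0}$ conditioning to the full $A_N$ conditioning is then handled by Corollary~\ref{june29.1}, applied separately to the post-attempt configurations (which, on the success event, are themselves in a nice separated state).
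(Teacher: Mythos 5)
There is a genuine gap at the very first step of your plan: the claim that a single successful ``merge'' at constant depth $k_0$, at some scale $n\le N/2-k_0$, forces $\og_N=_{N/2}\ogp_N$. The two processes are not Brownian motions; they are Brownian motions conditioned on $A_N(\og)$ and $A_N(\ogp)$ respectively, and these conditionings differ because the inner parts of the configurations differ. Concretely, after the merge the conditional law of the continuation of the first process is weighted by $q_{N-n-k_0}(\og_{n+k_0})$ and that of the second by $q_{N-n-k_0}(\ogp_{n+k_0})$, and when the two scaled configurations agree only outside $\ball_{-k_0}$ these weights differ by a relative error of order $e^{-k_0/2}$ (this is exactly Lemma \ref{q estimate}). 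So you cannot let both conditioned processes ``trace the same curve from $\p\ball_{n+k_0}$ to $\p\ball_N$'' while each retains its correct marginal $\mu_N(\og)$, $\mu_N(\ogp)$: the best one can do after a merge at depth $k_0$ is a maximal coupling of the two continuation laws, whose failure probability is of order $e^{-k_0/2}$ --- a constant, not decaying in $N$. Your final bound therefore degrades to $(1-\rho)^{N/(2k_0)}+O(e^{-k_0/2})$, which does not give $ce^{-\beta N}$.

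This is precisely why the paper's proof is organized around the random variable $\sigma(n)$ measuring the current agreement depth (together with the $\good$ condition): Proposition \ref{oct12.prop1} shows that if the configurations agree to depth $k$ and are in $\good_k$, then after $K$ more scales they agree to depth $k+K$ except with probability $C_0e^{-k/2}$, so the depth grows linearly and the per-stage failure probabilities become exponentially small; Proposition \ref{oct12.prop1.alt} plays the role of your ``attempt,'' restarting the coupling with probability $b>0$ when the depth is too small. The Markov-chain comparison then shows that the depth at scale $mK$ is at least $mK/2$ except on an event of probability $e^{-\beta mK}$, which is where the exponential rate in $N$ actually comes from. Your ingredients (separation, cone estimates, the bound $\overline q$, and the Brownian coupling lemma) are the right raw material and essentially reproduce Proposition \ref{oct12.prop1.alt}, but without the growing-depth mechanism --- i.e., without an analogue of Lemma \ref{q estimate} being applied at ever larger depths --- the argument cannot close. (A secondary caveat: your claimed two-sided density bound for the next $k_0$ increments, uniform over all outcomes, is also not quite right, since the Radon--Nikodym derivative $q_{N-n-k_0}(\og_{n+k_0})/q_{N-n}(\og_n)$ is not bounded below when the new increments bring the paths nearly together; one only gets lower bounds on the conditional probability of suitably separated events, which is what the paper uses.)
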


Recall that, for all $N$, $\og_N$ are pairs of paths from the origin to $\partial \ball$, so $\Omega$ will not depend on $N$.

\subsection{Preliminary estimates}

Let
 $\wiener_N(\og)$ denote the measure on
$\curves \times \curves$ induced from $\og$ using
  Wiener measure
as in Section \ref{introinv}.  Note that this is not a measure
on $\state$ since it gives nonzero measure to paths
$\og_n = (\gamma^1_n,\gamma^2_n)$ with $\gamma^1_n \cap \gamma^2_n
 \neq \{0\}$.

 \begin{definition}\label{RNderiv}
 If $n \leq N$, let $\mu_{n,N}
  = \mu_{n,N}(\og)$
be the probability measure on $\state$ induced by $\og_n$
conditioned on the event $A_N(\og),$  with
Radon-Nikodym derivative
\[       \frac{d\mu_{n,N}}{d\wiener_N}
   (\og_n) = \frac{q_{N-n}(\og_n)}
                    {q_N(\og)} \, 1\{
  \og_n \in \state\}.
\]
\end{definition}
 Note that $\mu_{n,N}$ is supported on $\state$
 and is absolutely continuous with respect
to $\wiener_N(\og)$ (which is essentially
 the same as $\wiener_n(\og)$ if we only view
the curves up to the time they first
  reach $\p \ball_n$).
If we write
\[    \mu_N(\og_1 \vert \og) =  \frac{d\mu_{1,N}}{d\wiener_N}
   (\og_1) = \frac{q_{N-1}(\og_1)}
  {q_N(\og)} \, 1\{\og_1 \in \state\}, \]
then for positive integers $n \leq N$,
\[
 \frac{d\mu_{n,N}}{d\wiener_N}
   (\og_n)  = 1\{ \og_n \in \state\}
 \prod_{j=0}^{n-1} \mu_{N-j}(\og_{j+1}
  \mid \og_j)  .\]

If $\og$ and $\ogp$ have the same endpoints, then
$\wiener_N(\og)$ is the same as $\wiener_N(\ogp)$, and we can
define
  $\og_1, \ogp_1$ by attaching the same Brownian motion.
If the paths $\og,\ogp$ agree, except near the origin, it is
reasonable to believe that
$\displaystyle{\frac{\mu_N(\og_{1}\vert
\og)}{\mu_N(\ogp_{1}\vert \ogp)}}$
 is close to 1.
Although we do not know if there exists a uniform estimate that holds
for all paths, there is
a uniform estimate if we restrict
to a good set of paths.
Let
\[\good_k =\{\og \in \state :  q_1(\og)\ge e^{-k/2}\}.\]
Note that $\cup_k \good_k = \state$, and
\eqref{basic33} implies that if
$n \geq 1$, then
\begin{equation}  \label{oct11.1.alt}
q_n (\og) \geq \rho_1 \rho_3 \, e^{-k/2} \, e^{-(n-1) \xi},
 \;\;\;\; \og \in \good_k,
\end{equation}
\begin{equation}  \label{oct11.1}
    q_n(\og) \leq c_* \,    \, e^{-k/2} \, e^{-(n-1) \xi},\;\;\;\;
  \og \in \state \setminus \good_k,
\end{equation}
Let
\[\nice_{k,m}:=
    \{\og \in \state
 :  \pi_{m} \og\cap\ball_{-k-m}=\emptyset\}.\]
In other words, $\nice_{k,m}$ is the set of ordered pairs of
paths that do not enter $\ball_{-k-m}$ after the first visit to
$\ball_{-m}$.  Note that if $\og\in \nice_{k,m}$ and $\og
=_m\og'$, then $\og' \in \nice_{k,m}$. Most paths $\og$ which
have a positive chance of non-intersection are $\nice$ and
$\good$. More precisely, we have the following lemma:

\begin{lemma}\label{gen estimates}
There exists $c_0 < \infty$ such that
if $k,m,n$ are positive integers with
$ m \leq n$,  then for all $\og \in \good_k$,
\[
 \left|\Prob\left[A_{n}(\og) \cap \{\og_m
    \in \nice_{k,m}\}\right] - q_n(\og)\right|
 \leq
     c_0\, e^{-k/2} \,  q_n(\og),
\]
\[ \left| \Prob\left[A_{n+1}(\og) \cap \{\og_m
    \in \nice_{k,m} \cap \good_{k}\}\right]
- q_{n+1}(\og)\right|
  \leq
     c_0\, e^{-k/2} \,  q_{n+1}(\og).\]
\end{lemma}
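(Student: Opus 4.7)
The plan is to derive both inequalities from the single estimate
\begin{equation} \label{keybound}
\Prob\!\left[A_m(\og);\, \og_m \notin \nice_{k,m}\right] \;\leq\; c\, e^{-k}\, \overline q_m, \qquad m \geq 1,\; \og \in \state,
\end{equation}
which I expect to hold for every $\og \in \state$ without invoking the $\good_k$ hypothesis. Granting \eqref{keybound}, the assumption $\og \in \good_k$ combined with Corollary \ref{june29.1} gives $q_m(\og) \geq \rho_1\rho_3\, e^{-k/2}\, e^{-\xi(m-1)}$, while Proposition \ref{up2const} gives $\overline q_m \leq c_*\, e^{-\xi m}$. Hence $\overline q_m \leq C\, e^{k/2}\, q_m(\og)$, so the right-hand side of \eqref{keybound} is at most $c'\, e^{-k/2}\, q_m(\og)$, which is the first inequality in the case $n = m$.

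For \eqref{keybound}, write $\{\og_m \notin \nice_{k,m}\} = R^1 \cup R^2$, where $R^j$ is the event that the Brownian extension $B^j$ enters $\ball_{-k}$ before reaching $\p\ball_m$, and bound the two symmetric terms separately. Since $A_m(\og)$ is contained in the weaker event $\{B^1[0,T_m^1]\cap B^2[0,T_m^2] = \emptyset\}$, drop the $\og$-avoidance. On $R^1$ the path $B^1$ must re-cross $\p\ball$ between visiting $\ball_{-k}$ and reaching $\p\ball_m$; let $\sigma$ denote that first return and set $\tilde w_1 = B^1(\sigma)$. Conditioning on the $B^1$-history up to $\sigma$ and on the full independent path $B^2$, the strong Markov property makes $B^1[\sigma,T_m^1]$ a fresh Brownian motion from $\tilde w_1$, so Fubini yields
\[
\Prob\!\left[R^1;\, B^1[\sigma,T_m^1] \cap B^2[0,T_m^2] = \emptyset\right] \;=\; \E\!\left[\mathbf{1}_{R^1}\, q_m(\tilde w_1, w_2)\right] \;\leq\; \overline q_m\, \Prob[R^1] \;\leq\; e^{-k}\, \overline q_m,
\]
the last step being the gambler's ruin bound \eqref{jan14.3}.

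To lift the first inequality to general $n \geq m$, apply strong Markov at $(T_m^1, T_m^2)$: the event $\{\og_m \notin \nice_{k,m}\}$ is $\og_m$-measurable, and the continuation has conditional probability $q_{n-m}(\og_m) \leq \overline q_{n-m}$, so
\[
\Prob[A_n(\og); \og_m \notin \nice_{k,m}] \;\leq\; \overline q_{n-m}\cdot \Prob[A_m(\og); \og_m \notin \nice_{k,m}],
\]
and the inequality $q_m(\og)\, \overline q_{n-m} \leq C\, q_n(\og)$ (again from Corollary \ref{june29.1} and Proposition \ref{up2const}) absorbs the extra factor. For the second inequality I would decompose the complementary event as $\{\og_m \notin \nice_{k,m}\} \cup \{\og_m \in \nice_{k,m}\setminus\good_k\}$; the first piece is handled by the first inequality applied at $n+1$, while on the second piece the upper bound \eqref{oct11.1} gives $q_{n+1-m}(\og_m) \leq c_*\, e^{-k/2}\, e^{-(n-m)\xi}$, and the same Markov reduction then yields the required $c\,e^{-k/2}\, q_{n+1}(\og)$ bound.

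The main point requiring care is the Fubini step leading to \eqref{keybound}: one must verify that $\sigma < T_m^1$ automatically on $R^1$ (forced by the geometric sandwich $\ball_{-k} \subset \ball \subset \ball_m$, so $B^1$ must pass through $\p\ball$ on its way from $\ball_{-k}$ out to $\p\ball_m$), and that the average $\E^{B^2}[\Prob_{\tilde w_1}[\text{fresh BM avoids } B^2]]$ is, by definition, exactly $q_m(\tilde w_1, w_2)$.
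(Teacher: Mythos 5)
Your argument is correct and follows essentially the same route as the paper's proof: the ``not nice'' part is controlled by the gambler's ruin estimate \eqref{jan14.3} together with bounding the continuation by $\overline q$, and the ``not good'' part by \eqref{oct11.1} and the quasi-multiplicativity of Corollary \ref{june29.1}, with the $\good_k$ hypothesis used only at the end to convert $e^{-n\xi}$ into $q_n(\og)$. The only difference is cosmetic: you prove the key bound at level $m$ and lift it to level $n$ by the strong Markov property, whereas the paper bounds $\Prob[A_n(\og)\mid (E_{m,k}^j)^c]\le \overline q_n$ directly.
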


\begin{proof} Let $k, m, n$ be given and
let $(W^1,W^2)$ denote Brownian motions starting at the
endpoints of $(\gamma^1, \gamma^2)$. Let
\[   E^j_{m,k}=  \{W^j[0,T_m^j] \cap \p \ball_{-k}
   = \emptyset \}, \;\;\;\;E_{m,k} = E_{m,k}^1 \cap
E_{m,k}^2. \] Using \eqref{jan14.3}, for all $|w_j|=1$ we have
\[  \Prob^{w_j}[(E_{m,k}^j)^c] \leq \Prob^{w_j}\left\{\sup_{0 \leq t < \infty}
    |W_t| \leq e^{-k} \right\} = e^{-k}. \]
Using the strong Markov property and \eqref{basic3}, we can see that
\[  \Prob[A_{n}(\og) \mid (E_{m,k}^j)^c] \leq \q_{n}
   \leq c_* \, e^{-n\xi}. \]
Therefore, for all $\og \in \state$,
\begin{equation}  \label{jan15.1}
  \Prob\left[A_{n}(\og) \cap \{\og_m
    \not\in \nice_{k,m}\}\right]
= \Prob[A_n(\og) \cap (E_{m,k})^c] \leq 2 \, e^{-k}
  \, c_* \, e^{-n\xi}.
\end{equation}
Using \eqref{oct11.1.alt}, we can find a constant $c_0,$ depending on $\rho_1, \rho_3, \xi$ and $c_*$ such that
\[
\Prob\left[A_{n}(\og) \cap \{\og_m
    \not\in \nice_{k,m}\}\right]
 \leq 2 \, e^{-k} \, c_* \, e^{-n\xi}
 \leq c_0 \, e^{-k/2} \,  q_n(\og)
\]
which proves the first inequality.
For the second inequality, for all $\og \in \state$, using
\eqref{basic33} and \eqref{oct11.1},
\begin{eqnarray*}
 \Prob\left[A_{n+1}(\og) \cap \{\og_m
    \not\in
\good_{k}\}\right]  & \leq &
   \Prob[A_{m}(\og)] \; \Prob\left[
   A_{n+1}(\og) \mid A_m(\og),
\og_m \not \in
   \good_{k} \right]\\
  &\leq & \left[c_* q_1(\og) e^{-(m-1) \xi}\right]
  \, \left[c_* \, e^{-(n-m) \xi} \, e^{-k/2}\right]\\
 & \leq & c \, q_1(\og) \,
  e^{-n \xi} \, e^{-k/2}\\
  & \leq & c' \, q_{n+1}(\og) \, e^{-k/2} \, .
\end{eqnarray*}
The inequality follows from this, together with the first part
of the lemma.

\end{proof}

\begin{lemma}\label{q estimate}
There exists $c_0' < \infty$ such that if $n,k$ are positive
integers, $\og, \og' \in \state$ with
 $\og \in \good_k$,  and $\og =_k \og'$,
then
\begin{equation}  \label{oct13.3}
  |q_n(\og) - q_n(\og')| \leq c_0' \, e^{-k/2}
  \, q_n(\og).
\end{equation}
\end{lemma}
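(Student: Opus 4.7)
The plan is to exploit $\og =_k \og'$ directly: the two configurations differ only inside $\overline{\ball}_{-k}$ and share common terminal points on $\p\ball^2$, so conditional on the attached Brownian motions never entering $\ball_{-k}$, the non-intersection events for $\og$ and $\og'$ coincide. The remaining work is to bound the (small) probability that either Brownian motion does enter $\ball_{-k}$.

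Because $\pi_k\og=\pi_k\og'$, the curves have common terminal points in $\p\ball^2$, so I would couple the attached Brownian motions $B^1,B^2$ on one probability space. Let $E=\{B^j[0,T_n^j]\cap\p\ball_{-k}=\emptyset,\ j=1,2\}$; after rescaling this is exactly the event $\{\og_n\in\nice_{k,n}\}$. On $E$ the Brownian pieces lie in $\R^3\setminus\overline{\ball}_{-k}$; the portions of $\gamma^j$ and $\gamma'^j$ outside $\ball_{-k}$ coincide by hypothesis; the portions inside $\ball_{-k}$ are disjoint from each other because $\og,\og'\in\state$; and those interior portions are not touched by the Brownian motions. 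Going through the four pairwise intersections $\gamma^1\cap\gamma^2$, $\gamma^1\cap B^2$, $B^1\cap\gamma^2$, $B^1\cap B^2$ appearing in the definition of $A_n$ yields the set identity $A_n(\og)\cap E=A_n(\og')\cap E$, so
\[
|q_n(\og)-q_n(\og')|\leq\Prob[A_n(\og)\cap E^c]+\Prob[A_n(\og')\cap E^c].
\]

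Both terms should be $O(e^{-k/2}q_n(\og))$. For the first, Lemma \ref{gen estimates} applied with $m=n$ and $\og\in\good_k$ immediately gives $\Prob[A_n(\og)\cap E^c]\leq c_0 e^{-k/2}q_n(\og)$. For the second, we cannot assume $\og'\in\good_k$, but the universal bound \eqref{jan15.1} from the proof of Lemma \ref{gen estimates} (which requires no $\good$ hypothesis) yields
\[
\Prob[A_n(\og')\cap E^c]\leq 2c_* e^{-k}e^{-n\xi}.
\]
Meanwhile $\og\in\good_k$ together with \eqref{oct11.1.alt} forces $q_n(\og)\geq\rho_1\rho_3 e^{-k/2}e^{-(n-1)\xi}$, so $e^{-n\xi}\leq c\,e^{k/2}q_n(\og)$; substituting bounds the second term by a constant multiple of $e^{-k/2}q_n(\og)$ as well. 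Summing the two estimates yields \eqref{oct13.3}.

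The main obstacle, modest as it is, is the bookkeeping for the second term: since $\og'$ need not lie in $\good_k$, its non-intersection probability $q_n(\og')$ could be much smaller than $q_n(\og)$ and must not appear on the right-hand side. The trick is to use the universal tail estimate \eqref{jan15.1} for the $\og'$ contribution and then trade the resulting $e^{-k}$ against the lower bound on $q_n(\og)$ supplied by $\og\in\good_k$; everything else is a clean unpacking of the set identity and Lemma \ref{gen estimates}.
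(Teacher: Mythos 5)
Your argument is correct and follows the paper's own proof essentially verbatim: attach the same Brownian motions to the common endpoints, observe that on the event that they avoid $\ball_{-k}$ the events $A_n(\og)$ and $A_n(\og')$ coincide, bound the complementary contributions by the universal estimate \eqref{jan15.1}, and convert $e^{-k}e^{-n\xi}$ into $e^{-k/2}q_n(\og)$ via \eqref{oct11.1.alt} using $\og\in\good_k$. Routing the $\og$-term through Lemma \ref{gen estimates} rather than directly through \eqref{jan15.1} is only a cosmetic repackaging, so there is nothing substantive to add.
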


\begin{proof}  Using the notation of the previous
lemma, we see that if $\og=_k \og'$ and we attach the same
Brownian motions to $\og$ and $\ogp$, and if additionally the
attached Brownian motions do not enter $\ball_{-k}$ before
reaching $\p \ball_n$, then non-intersection probabilities for
the pairs starting with $\og$ and $\ogp,$ respectively, are
equal. Formally,
\[       \Prob[A_n(\og) \cap E_{n,k}]
   = \Prob[A_n(\og') \cap E_{n,k}]. \]
Using \eqref{jan15.1}, which holds for all $\og, \ogp \in \state$, we see that
\begin{equation}\label{star}
|q_n(\og) - q_n(\og')| \leq
 \Prob[A_n(\og) \cap (E_{n,k})^c] +
 \Prob[A_n(\og') \cap (E_{n,k})^c]
    \leq c\, e^{-k} \,e^{-n\xi}.
    \end{equation}
But since $\og \in \good_k$, \eqref{oct11.1.alt} implies that
$q_n(\og) \geq c' \, e^{-k/2} \, e^{-n\xi }$ and the lemma
follows. We note that $\ogp$ need not be in $\good_k$.
\end{proof}

\subsection{Coupling}

Fix a large integer $N$ and assume $\og,\og' \in \state$. In
order to show that the distributions $\mu_N(\og)$ and
$\mu_N(\og')$ are close, we will define a coupling. If, for $k$
large enough, $\og=_k \ogp$, then the paths stay coupled with
high probability, depending only on $k$. However, if $k$ is not
large, or even if $\og$ and $\ogp$ do not have the same
endpoints, the coupling can be started, with positive
probability. We prove these facts in the next two propositions.

\begin{proposition} \label{oct12.prop1}
 There exists $C_0$ such that the following
holds.  Suppose   $k,m,N$ are positive integers with $m \leq N$,
 and $\og, \ogp \in \state$ with $\og\in \good_k$ and $\og =_k
 \og'.$ Then we can define $\og_m, \og_m'$
on the same probability space $(\Omega,\F,\Prob)$ such that
$\og_m$ has distribution $\mu_{m,N}(\og)$, $\og_m'$ has
distribution $\mu_{m,N}(\ogp)$, and

\[           \Prob\left\{   \og_m =_{k+m} \og_m'
\right\} \geq 1 - C_0 e^{-k/2}. \] Moreover,
  if $N \geq m+1$,
 \[  \Prob \left\{\og_m \in \good_{k} \right\}
   \geq 1 - C_0 e^{-k/2}. \]
\end{proposition}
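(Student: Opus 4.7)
The plan is to build the coupling by attaching one \emph{common} pair of Brownian continuations to both $\og$ and $\og'$, and then maximally couple the two conditioning events.  Because $\og =_k \og'$, the two ordered pairs share the same terminal point $\w = (w_1,w_2) \in \p \ball^2$.  Let $(B^1,B^2)$ be independent Brownian motions started at $(w_1,w_2)$ and run to $(T_N^1,T_N^2)$, and let $\og_m^W$ (resp.\ $\og'^W_m$) denote the scaled pair built from $\og,B$ (resp.\ $\og',B$) by the procedure of Section~\ref{introinv}.  Since $\og$ and $\og'$ differ only inside $\ball_{-k}$, and the attached Brownian portion is \emph{identical}, after scaling by $e^{-m}$ the two pairs differ only inside $\ball_{-k-m}$; thus $\og_m^W =_{k+m} \og'^W_m$ pathwise, for every realization of $B$.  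So the only obstruction to $\og_m =_{k+m} \og_m'$ in the desired coupling comes from the conditioning: by the Radon--Nikodym formula recalled in Section~\ref{introinv}, $\mu_{m,N}(\og)$ is the pushforward of $W[\,\cdot\mid A_N(\og)]$ under $B\mapsto \og_m^W$, and analogously for $\og'$.

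I will take a maximal coupling $(B,B')$ of $W[\,\cdot\mid A_N(\og)]$ and $W[\,\cdot\mid A_N(\og')]$ on the Brownian-continuation space and set $\og_m := \og_m^W(B)$, $\og_m' := \og'^W_m(B')$; on $\{B=B'\}$ we automatically obtain $\og_m =_{k+m} \og_m'$.  The heart of the argument --- and the step I expect to be most delicate --- is the total-variation estimate
\[
   \bigl\|W[\,\cdot\mid A_N(\og)] - W[\,\cdot\mid A_N(\og')]\bigr\|
    = \tfrac12 \int \Bigl|\tfrac{1_{A_N(\og)}}{q_N(\og)} - \tfrac{1_{A_N(\og')}}{q_N(\og')}\Bigr|\,dW \leq C_0\, e^{-k/2}.
\]
I will split the integral according to the event $E = \{B^1[0,T_N^1] \cup B^2[0,T_N^2]$ does not enter $\ball_{-k}\}$; by \eqref{jan14.3} and a union bound, $W(E^c) \leq 2e^{-k}$.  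On $E$ the events $A_N(\og)$ and $A_N(\og')$ coincide (any candidate intersection lies outside $\ball_{-k}$, where $\og$ and $\og'$ agree), so the integrand collapses to $1_{A_N(\og)\cap E}\,|1/q_N(\og)-1/q_N(\og')|$, whose integral is at most $|q_N(\og)-q_N(\og')|/q_N(\og')$, which is $O(e^{-k/2})$ by Lemma~\ref{q estimate} (for small $k$ this can be absorbed into $C_0$).  On $E^c$ I bound the two indicators separately, repeating the estimate already made in \eqref{jan15.1}: $\Prob[A_N(\og)\cap E^c] \leq 2e^{-k}\,\overline q_N \leq 2c_* e^{-k}e^{-N\xi}$, and dividing by $q_N(\og) \geq \rho_1\rho_3\, e^{-k/2} e^{-(N-1)\xi}$ (from $\og\in\good_k$ and \eqref{oct11.1.alt}) yields another $O(e^{-k/2})$ contribution; the same bound for $\og'$ follows since Lemma~\ref{q estimate} implies $q_N(\og')/q_N(\og) = 1+O(e^{-k/2})$.

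For the second assertion I argue directly about the marginal $\mu_{m,N}(\og)$ (no coupling involved).  When $N \geq m+1$, the strong Markov property at $(T_m^1,T_m^2)$ followed by Brownian scaling gives
\[
   \Prob\{\og_m \notin \good_k,\ A_N(\og)\}
    = E\bigl[1_{A_m(\og)\cap\{\og_m\notin\good_k\}}\; q_{N-m}(\og_m)\bigr]
    \leq c_*\, e^{-k/2}\, e^{-(N-m-1)\xi}\, q_m(\og),
\]
where I used \eqref{oct11.1} on the integrand.  Corollary~\ref{june29.1} controls both $q_m(\og) \leq c_* q_1(\og) e^{-(m-1)\xi}$ and $q_N(\og) \geq \rho_1\rho_3\, q_1(\og)\, e^{-(N-1)\xi}$, so upon dividing by $q_N(\og)$ the factor $q_1(\og)$ cancels and the exponentials telescope to an absolute constant, producing $\Prob\{\og_m \notin \good_k \mid A_N(\og)\} \leq C_0\, e^{-k/2}$, as required.
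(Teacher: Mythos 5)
Your argument is correct, but it organizes the coupling differently from the paper, so a comparison is worth recording. The paper maximally couples the two measures $\mu_{m,N}(\og)$ and $\mu_{m,N}(\ogp)$ directly on path space at scale $m$, estimating their total variation distance by comparing the Radon--Nikodym derivatives $q_{N-m}(\og_m)/q_N(\og)$ and $q_{N-m}(\ogp_m)/q_N(\ogp)$ with respect to the common measure $\wiener_N$; this forces a decomposition at scale $m$ into the event $\{\og_m\in\nice_{k,m}\cap\good_k\}$ and its complement, i.e.\ it leans on both inequalities of Lemma \ref{gen estimates} together with Lemma \ref{q estimate}, and it must treat $m=N$ as a separate case because the density then degenerates to $1\{\og_N\in\state\}/q_N(\og)$. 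You instead maximally couple the conditional laws of the \emph{entire} Brownian continuations given $A_N(\og)$ and $A_N(\ogp)$ and push forward; since the total variation distance of these finer laws dominates that of the $\og_m$-marginals, and since on the event $E$ (continuations avoiding $\ball_{-k}$) the two conditioning events literally coincide, your comparison only ever involves $q_N(\og)$ versus $q_N(\ogp)$ (Lemma \ref{q estimate}) plus the $O(e^{-k})$ remainder of \eqref{jan15.1} divided through by \eqref{oct11.1.alt} --- no intermediate-scale $\nice_{k,m}$ analysis, and the cases $m<N$ and $m=N$ are handled uniformly. The price is that the second assertion no longer comes for free from \eqref{not good}: your direct computation of $\mu_{m,N}(\og)[\good_k^c]$ via the strong Markov property, \eqref{oct11.1} and Corollary \ref{june29.1} is exactly a re-derivation of the second inequality of Lemma \ref{gen estimates} (and correctly uses $N\ge m+1$ so that $N-m\ge 1$). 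One further small point in your favor that you verified implicitly and should keep explicit: the differing initial segments lie strictly inside the open ball $\ball_{-k}$ (everything before the first visit to $\p\ball_{-k}$ has modulus $<e^{-k}$), which is what makes $A_N(\og)\cap E=A_N(\ogp)\cap E$ an exact identity and $\og_m^W=_{k+m}\og_m'^W$ a pathwise statement; with that noted, your proof is complete, and the small-$k$ regime is correctly absorbed into $C_0$ since total variation is bounded by one.
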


\begin{proof}
Using maximal coupling (see \citealp{Lindvall}), the estimate on
the coupling rate follows directly from estimates on the total
variation distance between $\mu_{m,N}(\og_m)$ and
$\mu_{m,N}(\ogp_m).$ Recall that these measures are described in Definition \ref{RNderiv}.

First we consider the case $m<N.$ Suppose we attach Brownian
motions that result in $\og_m \in \nice_{k,m}\cap \good_k$. Then
clearly $\og_m \in \state$ if and only if $\ogp_m\in \state.$
Lemma \ref{q estimate} applied to $\og$ and $\og_m$ implies that
for all $k$ large, satisfying $c_0'e^{-k/2} <1/2$, using the notation from Definition \ref{RNderiv},
\begin{equation}\label{nice and good}
\left\vert \frac{d\mu_{m,N}}{d\wiener_N}(\og_m)-
\frac{d\mu_{m,N}}{d\wiener_N}(\ogp_m) \right\vert \le 4 c_0'
e^{-k/2}\, \frac{d\mu_{m,N}}{d\wiener_N}(\og_m)  \, .
\end{equation}
For $\og_m \notin \nice_{k,m} \cap \good_k,$ we have by Lemma
\ref{gen estimates},
\begin{equation}\label{not good}
\mu_{m,N}\left[(\good_k\cap \nice_{k,m})^c\right]\le c_0
e^{-k/2}.
\end{equation}
The coupling rate now follows from putting together \eqref{nice
and good} and \eqref{not good}:
\[\Prob\{\og_m\neq_{k+m} \ogp_m\}=\frac 12 \Vert \mu_{m,N}(\og)-\mu_{m,N}(\ogp)\Vert \le
(4c_0'+c_0)e^{-k/2}\, .
\]

For $m=N$, we recall that
\[\frac{d\mu_{N,N}}{d\wiener_N}(\og_N)=
 \frac{1\{\og_N\in \state\}}{q_N(\og)},
\]
and using the same argument as above, along with the first
inequality in Lemma \ref{gen estimates}, we get
\[\Prob\{\og_N\neq_{k+N} \ogp_N\}
\le (c_0'+c_0)e^{-k/2}\, .
\]

Take $C_0=4c_0'+c_0$ and note that the second inequality in the
proposition follows immediately from \eqref{not good}.

\end{proof}

We now fix an integer $K$ such that
\begin{equation}\label{K def}
    C_0 e^{-\frac{K - 2}{2}} < \frac 12 \, ,
\end{equation}
where $C_0$ is the constant of the previous proposition. We will
use the coupling described above for $k\ge K-2$. Otherwise we
will use the following.

\begin{proposition}  \label{oct12.prop1.alt}
   There exists $b > 0$, such that if $K \leq N-1$
and $\og,\og' \in \state$, then we can find a coupling of
$\mu_{K,N}(\og)$ and $\mu_{K,N}(\og')$ such that with
probability at least $b$,
\[               \og_{K} =_{K-2} \og_{K}' , \]
and
\[                 \og_{K} \in \good_{K-2}.\]
\end{proposition}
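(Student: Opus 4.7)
The plan is to construct, on a common probability space, Brownian motions from the endpoints of $\og$ and of $\og'$ which, with probability at least some universal constant $b > 0$: (i) become identical from the first visit to $\p\ball_2$ (in unscaled coordinates) onward, (ii) continue without intersection to $\p\ball_N$, and (iii) reach $\p\ball_K$ in a configuration which, after scaling by $e^{-K}$, lies in $\sep$. Condition (i) forces $\og_K =_{K-2} \og_K'$, since after Brownian scaling $\pi_{K-2}\og_K$ is determined by the unscaled paths from the first visit to $\p\ball_2$ onward; condition (iii) gives $\og_K \in \good_{K-2}$ via Lemma \ref{sep2.alt}, provided $K$ is at least as large as the constant chosen in \eqref{K def} (enlarging that constant if necessary); condition (ii) ensures that both $A_N(\og)$ and $A_N(\og')$ hold on the constructed event, so the joint event can be re-weighted to yield the correct marginal conditional laws.

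In Phase~1, I fix a pair of well-separated target points $z_1^\ast, z_2^\ast \in \p\ball_{3/2}$ and small disjoint neighborhoods $U_j \subset \p\ball_{3/2}$ around them. For any $\og$ with endpoints $w_1, w_2 \in \p\ball$, I drive the attached Brownian motions $B^1, B^2$ so that $B^j(T_{3/2}^j) \in U_j$, and so that $B^j[0,T_{3/2}^j]$ is confined to a cone-shaped tube lying outside $\overline\ball$ except in a thin wedge near $w_j$. Such confinement automatically avoids $\gamma^{3-j} \subset \overline\ball$ and the other Brownian motion $B^{3-j}$, because the tubes for $j=1,2$ are disjoint. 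The probability of this event is bounded below by a universal $c_1 > 0$: escape from $\overline\ball$ on the initial excursion is handled via \eqref{jan14.3} together with the Harnack inequality applied on a small ball outside $\overline\ball$ adjacent to $w_j$, and confinement to the tube outside $\overline\ball$ uses Lemma \ref{conelemma} in a translated form. The same construction is carried out in parallel from the endpoints of $\og'$.

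In Phase~2, with all four motions landing in $U_1 \times U_2 \subset \p\ball_{3/2} \times \p\ball_{3/2}$, I apply the Coupling Lemma (rescaled so that $\p\ball_{3/2}$ plays the role of $\p\ball$) to identify $B^j$ with $B^{j\prime}$ from some intermediate sphere $\p\ball_{7/4}$ onward, with probability at least a universal $c_2 > 0$. In Phase~3, on the now-common pair from $\p\ball_{7/4}$ onward, I use the cone construction from the proof of Lemma \ref{sep1} to force the pair into a $\sep$-configuration at $\p\ball_K$; this has probability bounded below as the cone construction there is uniform in the incoming configuration. Finally, since the configuration at $\p\ball_K$ is in $\sep$, Lemma \ref{sep2.alt} ensures that the conditional probability of extending without intersection to $\p\ball_N$ is at least $\rho_3 \overline q_{N-K}$.

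Multiplying the probabilities across phases, the total unconditional probability of the joint constructed event is at least $c \overline q_{N-K}$ for some universal $c > 0$. On this event both $A_N(\og)$ and $A_N(\og')$ hold, and $q_N(\og), q_N(\og') \leq \overline q_N \leq \overline q_{N-K}$, so dividing by $q_N(\og)$ and $q_N(\og')$ yields a conditional lower bound of $b = c$ under both $\mu_{K,N}(\og)$ and $\mu_{K,N}(\og')$; a standard maximal-coupling argument on the complement then produces the joint law with the stated marginals. The main obstacle is to make Phase~1 uniform in the endpoints $(w_1, w_2)$ and the initial paths $\gamma^j$, particularly when $w_1, w_2$ are very close and the $\gamma^j$ hug $\p\ball$ near them: the cones must then be narrow and long so that their intersection with $\overline\ball$ is a thin wedge in which the Brownian motion spends very little time before escaping, but the cone-confinement probability must still be bounded below by a constant that depends only on the cone's opening angle and not on $(w_1, w_2)$. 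This is the same spirit as the separation lemma, applied now ``from outside''.
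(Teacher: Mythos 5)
There is a genuine gap, and it is exactly where you flag ``the main obstacle'': your claimed universal lower bound $c_1 > 0$ in Phase~1 cannot hold. The Phase~1 event you construct forces the two attached Brownian motions to avoid $\gamma^1, \gamma^2$ and each other up to $\p\ball_{3/2}$, so its unconditional probability is at most $q_{3/2}(\og) \leq q_1(\og)$, which goes to $0$ as $\og$ ranges over $\state$ (take $\gamma^1, \gamma^2$ hugging each other near their endpoints). Your heuristic --- that a narrow cone makes the Brownian motion spend ``very little time'' inside $\overline\ball$ so that avoidance is automatic --- does not work uniformly: the thin wedge where the cone meets $\overline\ball$ must be narrower than $\dist(w_j, \gamma^{3-j})$, which is unbounded below as $\og$ varies, and the cone-confinement probability from Lemma~\ref{conelemma} degenerates as the opening angle shrinks. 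In short, the event you are trying to lower-bound by a constant already contains the non-intersection event $A_{3/2}(\og)$, whose probability is small precisely when $\og$ is badly configured, and a fixed-angle cone cannot rescue this.

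This is precisely the role of the Separation Lemma, which you do not actually invoke in Phase~1. The paper's proof uses it as the very first step: under the conditioned law $\mu_{K,N}(\og)$ (or even just given $A_1(\og)$), the event $\og_1 \in \sep$ has probability $\geq \rho_1$ \emph{uniformly in $\og$}, because the bad factor $q_1(\og)$ is divided out by the conditioning; uniformity here is non-trivial and is exactly the content of Lemma~\ref{sep1}. Once $\og_1, \ogp_1 \in \sep$, the endpoints are uniformly separated from each other and from the attached initial paths, and then a construction along the lines of your Phases~2 and~3 (steer to common endpoints on $\p\ball_2$, follow radial lines to $\p\ball_K$, then apply Lemma~\ref{sep2.alt} at $\p\ball_K$) goes through with uniform constants. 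Your Phases~2--4 and the final re-weighting to the conditional measures are reasonable in spirit, and the choice of $b = c$ via $P(G) \geq c\,\overline q_{N-K} \geq c\,q_N(\og)$ is the right kind of bookkeeping; but with the corrected Phase~1 the unconditional probability will involve a factor like $q_1(\og)\,q_1(\og')$, so you must be a bit more careful in passing to the conditional measures than the one-line ``dividing by $q_N(\og)$ and $q_N(\og')$'' suggests. The cleanest fix is to run the construction directly under the conditioned laws from the start, as the paper does, which makes the Separation Lemma give you a constant at Phase~1 and avoids the $q_1(\og)$ bookkeeping altogether.
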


\begin{proof}
This is proved in the same way as Proposition \ref{oct15.prop1}.
Starting with $\og$ and $\ogp$, we attach Brownian paths up to
first time they hit $\p \ball_{K}$ in the following way. From
the Separation Lemma, with positive probability, by the time the
paths reach $\p \ball_1$, they have separated, that is $\og_1,
\ogp_1 \in \sep.$ With positive probability, we can attach paths
from $\p \ball_1$ to $\p \ball_2$ so that $\og_2$ and $\ogp_2$
have the same endpoints and $\og_2, \ogp_2 \in \sep.$ After
this, we can attach the same Brownian paths, which stay very
close to the radial lines up to the first time they reach $\p
\ball_K$. Thus $\og_K=_{K-2} \ogp_K$ with positive probability
$b(K)$ and the separation ensures $\og_K \in \good_K$. The
probability depends on $K$, but we have fixed a particular value
of $K$ and we let $b = \min\{b(K),1/2\}$.
\end{proof}

\begin{proof}[Proof of Theorem \ref{restate}]

Let $K$ be as defined in \eqref{K def}, and let $m$ be the
largest integer such that $m K \leq N-1$. We will start by
giving a coupling of $\mu_{mK,N}(\og)$ and $\mu_{mK,N}(\og')$.
We will do this one step at a time: first defining
$(\og_K,\og_K')$, then $(\og_{2K},\og_{2K}')$, etc. At each
stage $n \leq m$, we define the random variable
  $\sigma(n)$ to be the maximal nonnegative integer $j$
such that, in the coupling,
\[           \og_{nK} =_j \og_{nK}' \]
and
\[           \og_{nK} \in \good_j.\]
 We define $\sigma(N)$
to be the maximal nonnegative integer $j$ such that in the
coupling
\[           \og_N =_j \og_N' .\]
and do not require the ``good'' condition at $N$. Suppose that
we have defined $\left(\og_{nK},\og_{nK}'\right)$.
\begin{itemize}
\item  If $\sigma(n) \geq K-2$, we define $(\og_{(n+1)K},
\og_{(n+1)K}')$ using a coupling as in Proposition
\ref{oct12.prop1}.
\item  If $\sigma(n) < K-2$, we
define $(\og_{(n+1)K},\og_{(n+1)K}')$ using a coupling as in
Proposition \ref{oct12.prop1.alt}.
%\item  If $N-K - 2 < n \leq N$ and $\sigma(n) <K$,
%we may choose any coupling to define $(\og_{n+1},\og_{n+1}')$.
\end{itemize}
Let $\F_n$ denote the $\sigma$-algebra generated by
$(\og_{nK},\og'_{nK})$. Proposition \ref{oct12.prop1} implies
that if $j \geq K-2$ and $n < m$, then
\[   \Prob\left\{\sigma(n+1) = K+j
  \vert \F_n\right\} \geq (1 - C_0e^{-j/2}) \,
        1\{\sigma(n) = j\}. \]
Proposition \ref{oct12.prop1.alt} along with \eqref{K def} give
\[   \Prob\left\{\sigma(n+1) \geq K-2
  \vert \F_n\right\}  \geq b . \]

By comparison with a Markov chain (see, e.g.,
\citealp{Vermesi}), we can find $c>0$ and $\beta\le 1/4$ such
that
\[             \Prob\{\sigma(m) \leq mK/2\}
    \leq c \, e^{-\beta mK}.\]
We have thus produced a coupling of $\mu_{mK,N}(\og)$ and
$\mu_{mK,N}(\og')$ such that, with probability at least $1 -
c\,e^{-\beta mK}$, we have $\og_{mK} =_{mK/2} \og_{mK}'$ and
$\og_{mK} \in \good_{mK/2}$.

To complete the proof, use Proposition \ref{oct12.prop1} to
couple the paths for the last $N-mK$ steps. It is easy to see
that there exists $C,$ depending on $K,$ such that, with
probability at least $1 - C e^{-\beta N}$, we have $\og_N
=_{N/2} \og_N',$ without requiring that $\og_N \in
\good_{j}$ for some $j$ in this last step.
\end{proof}

\subsection{Some corollaries}

Here we establish some straightforward corollaries of the
coupling result.

\begin{proposition}
There exist $c>0 ,\beta < \infty$ such that for all $0 \leq m
\leq n$ and all $\og,\og' \in \state$,
\[   \left|\Prob(A_n(\og) \mid A_{m}(\og))
        - \Prob(A_n(\og') \mid A_m(\og')) \right|
       \leq c \, e^{-m\beta} \, e^{-\xi(n-m)}.\]
\end{proposition}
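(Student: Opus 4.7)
The plan is to reduce the claim to a comparison of $q_{n-m}$ along two paths coupled via Theorem~\ref{restate}. By the strong Markov property at $(T_m^1,T_m^2)$ together with Brownian scaling,
\[ q_n(\og) = \E\!\left[\mathbf{1}_{A_m(\og)}\, q_{n-m}(\og_m)\right], \]
so that $\Prob(A_n(\og) \mid A_m(\og))$ equals the integral of $q_{n-m}$ against the measure $\mu_m(\og)$, and the analogous identity holds for $\og'$. The case $m=0$ is immediate since both conditional probabilities are then bounded by $c_*\, e^{-n\xi}$ by Proposition~\ref{up2const}. For $m \ge 1$, I invoke Theorem~\ref{restate} with $N = m$ to place $(\og_m,\og_m')$ on a common probability space with marginals $\mu_m(\og)$ and $\mu_m(\og')$, so that $\Prob\{\og_m =_{m/2} \og_m'\} \ge 1 - c\, e^{-\beta m}$.

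The target quantity is then bounded by $\E|q_{n-m}(\og_m) - q_{n-m}(\og_m')|$ under this coupling. On the coupling event $A = \{\og_m =_{m/2} \og_m'\}$, I apply the intermediate inequality~\eqref{star} from the proof of Lemma~\ref{q estimate}, which is purely absolute and requires no $\good_k$ hypothesis. With $k = m/2$ and paths of length $n-m$, this gives
\[ |q_{n-m}(\og_m) - q_{n-m}(\og_m')| \le c\, e^{-m/2}\, e^{-(n-m)\xi}. \]
On $A^c$, which has probability at most $c\, e^{-\beta m}$, the trivial bound $|q_{n-m}(\og_m) - q_{n-m}(\og_m')| \le 2\, \overline q_{n-m} \le 2c_*\, e^{-(n-m)\xi}$ contributes at most $c'\, e^{-\beta m}\, e^{-(n-m)\xi}$. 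Adding the two contributions and setting $\beta' = \min(\beta, 1/2) > 0$ yields the desired inequality.

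Since every ingredient is already in place, I do not expect a substantial obstacle; the argument amounts to bookkeeping on top of Theorem~\ref{restate}. The one subtle point I would verify carefully is that \eqref{star} does hold without a $\good_k$ assumption: inspecting the proof of Lemma~\ref{q estimate}, one sees that \eqref{star} is derived directly from \eqref{jan15.1} applied to both paths, and that the $\good_k$ hypothesis enters only in the \emph{next} step, which converts this absolute estimate into the relative one stated in Lemma~\ref{q estimate}. Thus \eqref{star} is available in exactly the form required here, and the proof should go through cleanly.
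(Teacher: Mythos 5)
Your proposal is correct and follows essentially the same route as the paper: condition on $\F_m$ to reduce to comparing $\E[q_{n-m}]$ under $\mu_m(\og)$ and $\mu_m(\og')$, couple via Theorem~\ref{restate}, use \eqref{star} (which, as you correctly note, needs no $\good_k$ hypothesis) on the coupling event, and the trivial bound $q_{n-m} \le c_*\, e^{-(n-m)\xi}$ off it. Your handling of $m=0$ and the adjustment of the exponent to $\min(\beta,1/2)$ match the paper's bookkeeping (which uses $\beta \le 1/4$).
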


\begin{proof}
Let $\F_m$ denote the $\sigma$-algebra generated by $\og_m,
\ogp_m$. Then
\[ \Prob\left(A_n(\og) \mid \F_m\right)
  = 1\{\og_m \in \state\} \, q_{n-m}
   (\og_m). \]
Using Theorem \ref{restate}, we can find a coupling of
$\og_m,\og_m'$ so that, with probability at least $1-C e^{-\beta
m}$,
\[            \og_m =_{m/2} \og_m. \]
If $\og_m =_{m/2} \og_m'$, then from \eqref{star} we have
\[        |q_{n-m}(\og_m) -q_{n-m}(\og_m')|
  \leq c \, e^{-m/2} \, e^{-(n-m)\xi}. \]
If $\og_m \neq _{m/2} \og_m'$, we use the fact that for all
$\og^* \in \state,$
\[q_{n-m}(\og^*) \leq c_* \, e^{-(n-m)\xi}.\]
Now the proposition follows from putting these two estimates
together and recalling that $\beta\le 1/4$.

\end{proof}

\begin{proposition}\label{prop.Q}
Let $Q_n(\og) = e^{n \xi} \, q_n(\og)$.   There exist a  bounded
function $Q: \state \rightarrow (0,\infty)$ and $ c> 0 ,\beta
<\infty$
 such that if $\og \in \state$, then the following hold:
\[          \lim_{n \rightarrow \infty}
       Q_n(\og) = Q(\og),\]
\[           |Q(\og) - Q_n(\og)| \leq c \, Q(\og)
\, e^{-n \beta} ,\]
\[          \frac 1 c \leq \frac{Q(\og)}{q_1(\og)}
  \leq c . \]
\end{proposition}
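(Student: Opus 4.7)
\emph{Plan.} The boundedness $\tfrac{1}{c}\le Q(\og)/q_1(\og)\le c$ will follow by passing to the limit in Corollary~\ref{june29.1}, which already gives $\rho_1\rho_3 e^{\xi}q_1(\og)\le Q_n(\og)\le c_*e^{\xi}q_1(\og)$ for every $n\ge 1$; in particular each sequence $\{Q_n(\og)\}_n$ is bounded. So the real task is exponential convergence. My plan is to show that the ratios
\[\gamma_{n,k}(\og):=Q_{n+k}(\og)/Q_n(\og)=\int Q_k\,d\mu_n(\og)\]
tend to $1$ as $n\to\infty$, at a rate $O(e^{-n\beta})$ that is uniform in $k$.

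The first ingredient is the preceding proposition: multiplying through by $e^{\xi(n-m)}$ and relabeling $(n,m)\mapsto(n+k,n)$ yields
\[\bigl|\gamma_{n,k}(\og)-\gamma_{n,k}(\og')\bigr|\le C\,e^{-n\beta},\qquad \og,\og'\in\state,\ k\ge 0,\]
with $C,\beta$ independent of $k$. So, fixing a reference $\og^*\in\state$, each $\gamma_{n,k}(\og)$ agrees with $\gamma_{n,k}(\og^*)$ up to $Ce^{-n\beta}$.

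The second ingredient is the semigroup structure of the conditioned process. Using the strong Markov property at the first hit of $\p\ball_n$, $\mu_{n+m}(\og)$ can be realized as a $q_m$-reweighted mixture of the measures $\mu_m(\og_n)$ with $\og_n\sim\mu_n(\og)$, giving
\[\gamma_{n+m,k}(\og)=\frac{\int q_m(\og_n)\,\gamma_{m,k}(\og_n)\,d\mu_n(\og)}{\int q_m(\og_n)\,d\mu_n(\og)}=\gamma_{m,k}(\og^*)+O(e^{-m\beta}),\]
where the last equality replaces $\gamma_{m,k}(\og_n)$ by $\gamma_{m,k}(\og^*)$ inside the numerator via the first ingredient. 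Setting $\og=\og^*$ shows that $\{\gamma_{m,k}(\og^*)\}_m$ is Cauchy in $m$ at rate $e^{-m\beta}$, uniformly in $k$; hence $G_k:=\lim_m\gamma_{m,k}(\og^*)$ exists and $|\gamma_{n,k}(\og)-G_k|\le Ce^{-n\beta}$ for every $\og$.

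Finally I verify $G_k=1$. The identity $\gamma_{n,k_1+k_2}=\gamma_{n,k_1}\,\gamma_{n+k_1,k_2}$ passes to the limit to give $G_{k_1+k_2}=G_{k_1}G_{k_2}$, so $G_k=G_1^k$. But the two-sided bounds on $Q_n(\og^*)$ from Corollary~\ref{june29.1} force $\gamma_{m,k}(\og^*)$, and hence $G_k=G_1^k$, to lie in a fixed bounded interval for all $k\ge 0$; this is only possible if $G_1=1$. Therefore $|Q_{n+k}(\og)-Q_n(\og)|=Q_n(\og)\,|\gamma_{n,k}(\og)-1|\le C q_1(\og)\,e^{-n\beta}$ uniformly in $k\ge 0$, so $\{Q_n(\og)\}$ is Cauchy. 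Defining $Q(\og):=\lim_n Q_n(\og)$ and letting $k\to\infty$ yields $|Q(\og)-Q_n(\og)|\le C q_1(\og)\,e^{-n\beta}$; combined with the lower bound $Q(\og)\ge\rho_1\rho_3 e^{\xi}q_1(\og)$ inherited from Corollary~\ref{june29.1}, this becomes $|Q(\og)-Q_n(\og)|\le C' Q(\og)\,e^{-n\beta}$, and the third claim follows immediately from the Corollary by passing to the limit. The main technical obstacle is the $k$-uniformity of the implicit constants, which is ensured because $Q_k$ is bounded by a universal constant and the coupling errors in Theorem~\ref{restate}, Lemma~\ref{q estimate}, and the preceding proposition are all $k$-independent.
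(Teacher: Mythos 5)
Your proposal is correct, and it runs on the same engine as the paper's proof: the coupling (Theorem \ref{restate} together with \eqref{star}) showing that conditional non-intersection ratios are nearly independent of the initial configuration, plus the up-to-constants bounds of Corollary \ref{june29.1} to pin down the limiting value and to convert the additive error into a multiple of $Q(\og)$. The packaging, however, is genuinely different. The paper works with the one-step ratio $q_{n+1}(\og)/q_n(\og)=\E_{\mu_n(\og)}[q_1]$, shows via the coupling that it is Cauchy in $n$ uniformly over the initial configuration, identifies its limit as $e^{-\xi}$ directly from $q_n(\og)\asymp q_1(\og)\,e^{-n\xi}$, and then telescopes the resulting estimate $|Q_{n+1}(\og)-Q_n(\og)|\le c\,e^{-n\beta}Q_n(\og)$. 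You instead control all $k$-step ratios $\gamma_{n,k}=Q_{n+k}/Q_n$ simultaneously, quoting the (already established) unlabeled proposition preceding Proposition \ref{prop.Q} rather than rerunning the coupling, and you identify the limit through the semigroup relation $G_{k_1+k_2}=G_{k_1}G_{k_2}$ combined with the uniform two-sided bounds $\rho_1\rho_3/c_*\le \gamma_{m,k}(\og^*)\le c_*/(\rho_1\rho_3)$, which force $G_1=1$. What your version buys is uniformity in $k$ from the outset (so no telescoping is needed, and the Cauchy property of $Q_n$ falls out in one line) and a clean separation between the probabilistic input and the algebra; what the paper's version buys is that it avoids the functional-equation step, since the asymptotics $\overline q_n\approx e^{-n\xi}$ immediately identify the one-step limit. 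All the ingredients you invoke — the $k$-independence of the constant in that unlabeled proposition, the bound $Q_k\le c_*$, the mixture identity from the strong Markov property, and $q_1(\og)>0$ for $\og\in\state$ — are available at that point in the paper, so I see no gap.
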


\begin{proof}
Note that
\[      \frac{q_{n+1}(\og)}{q_n(\og)}
  =     \E_n [q_1(\og^*)] , \]
where the expectation on the right denotes the expectation with
respect to the probability measure $\mu_n(\og)$ over all $\og^* \in \state$. Using the
separation lemma, and more specifically Corollary
\ref{june29.1}, we see that there exists a constant $c>0$ such
that for $n \geq 1,$
\[c\le \frac{q_{n+1}(\og)}{q_n(\og)} \le 1 .\]
Consider two initial configurations $\og, \ogp \in \state$. By
\eqref{star}, if $\og_{n} =_{n/2} \og_{n}'$, then
\[         |q_1(\og_n) - q_1(\og_n')| \leq
              c\, e^{-n/2} . \]
But by Theorem \ref{restate}, we have $\og_{n} \neq_{n/2}
\og_{n}'$ with probability at most $Ce^{-\beta n}$. Using this
and the bound $\beta\le 1/4$,
\[             \left| \frac{q_{n+1}(\og)}
    {q_n(\og) } - \frac{q_{n+1}(\og')}
    {q_n(\og') } \right| \leq c \, e^{-\beta n}. \]
A similar argument shows that for $m \leq n$, and all $\og, \ogp
\in \state,$
\[  \left| \frac{q_{n+1}(\og)}
    {q_n(\og) } - \frac{q_{m+1}(\og')}
    {q_m(\og') } \right| \leq c \, e^{-\beta m}. \]
In particular, the limit
\[     \lim_{n \rightarrow \infty} \frac{q_{n+1}(\og)}
   {q_n(\og)} \]
exists and is independent of the initial configuration $\og$.
Since $q_n(\og) \asymp q_1(\og) \, e^{-n \xi}, $ the limit must
equal $e^{-\xi}$. Therefore,
\[    \vert \, Q_{n+1}(\og) - Q_n(\og) \, \vert \le
  c\, e^{-n\beta }\, Q_n(\og), \] and by iterating this, we see
for all positive integers $m$,
\[    \vert \, Q_{n+m}(\og) - Q_n(\og) \, \vert \le
  c \, e^{- n \beta} \, Q_n(\og), \]
with a different constant $c$. In particular, the sequence
$\{Q_n(\og)\}$ is a Cauchy sequence in $n$ and has a limit
$Q(\og)$ satisfying
\[   \vert \, Q_n(\og) - Q(\og) \, \vert
\le  c \, e^{- n \beta} \, Q(\og). \] This establishes the
result for integer $n$, but it is easy to extend it to
non-integer $n \geq 1$. Recalling that for all $n\ge 1$ and all
$\og \in \state$, we have $Q_n(\og)\le c_*$, this result also
proves the first claim in Theorem \ref{main1}.

The last assertion follows from a direct application of
Corollary \ref{june29.1}
\end{proof}

\begin{definition}  If $K_1,K_2 \subset \R^3$ are
compact subsets of $\R^3$ with $K_1 \cap K_2$ finite,
  and $\w=(w_1,w_2) \in \R^3 \times
\R^3$, let
\[  Q_n(\K;\w) = e^{n \xi} \,  \Prob^{\w}[A_n(K_1,K_2) ].
\]

\begin{equation}  \label{jan18.1}
 Q(\K;\w) =\lim_{n \rightarrow \infty} Q_n(\K;\w)=
       \lim_{n \rightarrow \infty} e^{n \xi}
   \Prob^{\w}[A_n(K_1,K_2) ].
\end{equation}
If $K_1 \cap K_2$ is infinite, we define $Q(\K;\w) =0$
\end{definition}

\begin{proposition}  \label{aug17.prop}
The limit \eqref{jan18.1} exists. If   $K_1,K_2 \subset
\overline \ball$ are disjoint
 and
$w_1,w_2 \in \overline \ball$, and $n \geq 1$,
%    Q(\K; \w) = e^{n \xi} \, \Prob^\w
%[A(K_1,K_2)] \, \left[1 + O(e^{-n \beta})\right].
\begin{equation}  \label{jan18.2}
   \vert \, Q(\K; \w) - Q_n(\K; \w) \, \vert
   \le  C\, e^{-n \beta} \, Q(\K; \w).
\end{equation}
$Q$ satisfies the scaling rule
\begin{equation}  \label{scaling}
  Q(e^r\K;e^r\w) =e^{r\xi}
  \, Q(\K;\w) ,
\end{equation}
and it is translation invariant
\[  Q(\K+ \z; \w+ \z)
  = Q(\K;\w). \]
\end{proposition}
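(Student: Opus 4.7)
The plan is to mirror the proof of Proposition \ref{prop.Q}, extending the coupling machinery of Section \ref{sec:coupling} from the state space $\state$ to the larger space of configurations $(\K;\w)$ with $\K = (K_1,K_2)$ a disjoint pair of compact subsets of $\overline{\ball}$ and $\w \in \overline{\ball}^2$. The crucial observation is that after running the Brownian motions from $\w$ to $\p\ball_1$ and scaling by $e^{-1}$, the new pair $K_j^{\text{new}} = e^{-1}(K_j \cup B^j[0,T_1^j])$ with endpoint $w_j^{\text{new}} = e^{-1} B^j(T_1^j) \in \p\ball$ satisfies $K_j^{\text{new}} \cap \p\ball = \{w_j^{\text{new}}\}$ almost surely. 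This is precisely the hypothesis of the alternative form of the separation lemma, so uniform separation on the next scale is available directly.

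With this in hand, I would replay the argument of Section \ref{sec:coupling}. The analogue of Lemma \ref{q estimate} is: if $(\K;\w)$ and $(\K';\w')$ agree outside $\ball_{-k}$, then $|q_n(\K;\w) - q_n(\K';\w')| \leq c\, e^{-k} e^{-n\xi}$, which follows from \eqref{jan14.3} bounding the probability that a Brownian motion ever enters $\ball_{-k}$. The coupling analogue of Proposition \ref{oct12.prop1.alt} would then say that for any two initial configurations $(\K;\w), (\K';\w')$, there is a uniform positive probability that after a bounded number $K$ of steps, both evolutions reach the same separated curve $\og \in \sep \cap \good_{K-2}$; the proof is identical to the original, using the separation lemma above and near-radial cone excursions to drive both configurations to common endpoints on $\p\ball_K$ (the original $K_j$ lie deep in $\ball_{-1}$ after the first scaling step, so they do not interfere with these excursions).

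Armed with these tools, the first two assertions of the proposition follow exactly as in the proof of Proposition \ref{prop.Q}: iterating the coupling gives
\[
\left|\frac{q_{n+1}(\K;\w)}{q_n(\K;\w)} - \frac{q_{n+1}(\K';\w')}{q_n(\K';\w')}\right| \leq c\, e^{-\beta n},
\]
whence $q_{n+1}/q_n \to e^{-\xi}$ at exponential rate, the sequence $\{Q_n(\K;\w)\}$ is Cauchy, and the limit $Q(\K;\w)$ satisfies \eqref{jan18.2}. The scaling rule \eqref{scaling} is immediate from Brownian scaling: $q_n(\K;\w) = q_{n+r}(e^r\K; e^r\w)$, so $Q_n(\K;\w) = e^{-r\xi} Q_{n+r}(e^r\K;e^r\w)$, and passing to the limit gives \eqref{scaling}. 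Translation invariance is immediate from the translation invariance of Brownian motion. For $\K$ not contained in $\overline{\ball}$, the scaling identity reduces the existence claim to the case $e^{-r}\K \subset \overline{\ball}$ already handled. The principal obstacle is verifying the coupling analogue of Proposition \ref{oct12.prop1.alt}: one must check that the cone-based path attachments used there can be performed uniformly in the shape of $\K$, but this is straightforward once one observes that after scaling, the original compact sets lie well inside the domain where the cone excursions take place.
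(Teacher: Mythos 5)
Your handling of \eqref{jan18.2} and of the scaling rule is essentially the paper's own route: the paper likewise disposes of \eqref{jan18.2} by saying it is proved exactly as Proposition \ref{prop.Q}, and your observation that after one step the rescaled configuration $e^{-1}\bigl(K_j \cup B^j[0,T^j_1]\bigr)$ meets $\p \ball$ only at its endpoint, so the alternative form of the separation lemma applies and the coupling machinery restarts, is the right way to justify that assertion; the scaling identity $\Prob^{e^r\w}[A_{r+n}(e^r\K)] = \Prob^{\w}[A_n(\K)]$ followed by $n \to \infty$ is verbatim the paper's argument.

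The one genuine gap is translation invariance, which is \emph{not} immediate from translation invariance of Brownian motion. The events $A_n(K_1,K_2)$ are defined through the exit times of the balls $\ball_n$ centered at the origin, and these balls do not move when you shift the configuration by $\z$; for finite $n$ one has $Q_n(\K+\z;\w+\z) \neq Q_n(\K;\w)$ in general, because translating the sets and starting points while keeping the target sphere fixed changes the geometry (equivalently, translating back, the process is now stopped on a sphere of radius $e^n$ centered at $-z$ rather than at $0$). Only the limit is translation invariant, and that requires an argument: the paper sandwiches the off-center ball of radius $e^n$ between $\ball_{\log(e^n-1)}$ and $\ball_{\log(e^n+1)}$, giving
\[
\Prob^{\w}\bigl[A_{\log(e^n+1)}(K_1,K_2)\bigr] \;\le\; \Prob^{\w+\z}\bigl[A_n(K_1+z_1,K_2+z_2)\bigr] \;\le\; \Prob^{\w}\bigl[A_{\log(e^n-1)}(K_1,K_2)\bigr],
\]
and then multiplies by $e^{n\xi}$, notes $\log(e^n \pm 1) = n + O(e^{-n})$, and invokes the already-established existence of the limit $Q(\K;\w)$ to let $n \to \infty$. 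This is a short fix, but as written your one-line justification asserts a false identity at finite $n$, so you should replace it with (or add) this comparison argument.
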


\begin{proof} The proof of \eqref{jan18.2} is
essentially the same as that of Proposition \ref{prop.Q}.
Brownian scaling implies that
\[   \Prob^{e^r\w}\left[A_{r+n}(e^rK_1,e^rK_2)
\right] = \Prob^{\w} \left[A_n(K_1,K_2)\right],\] from which
\eqref{scaling} follows immediately. Also, if $|z| = 1$, the
closed disk of radius $e^n$ about $z$ contains $\ball_{\log(e^n
- 1)}$ and is contained in $\ball_{\log(e^n + 1)}$. Hence,
if $\z = (z_1,z_2)$,
\[   \Prob^\w \left[A_{\log(e^n +1)}(K_1,K_2
  )\right]\leq
           \Prob^{\w+ \z} \left[A_n(K_1+ z_1,K_2
  + z_2)\right]\leq  \Prob^\w \left[A_{\log(e^n -1)}(K_1,K_2
  )\right],\]
taking $n\to \infty$ proves the last assertion.
\end{proof}

\section{Invariant measure}\label{sec:inv}

With the coupling result, the proof of the existence of the
measure $\nu$ proceeds as in \cite{Linvar,LSWsep,Vermesi}.  We
start by defining $\pi_k \nu$ for positive integers $k$.  The
coupling result implies that for any $\og \in \state$, the
collection of measures $\{\pi_k \mu_n(\og): n=1,2,\ldots \}$ is
a Cauchy sequence of measures.  Indeed, if $n  \geq m \geq 2k$,
\[      \|\pi_k \mu_n(\og) - \pi_k \mu_m(\og)\| \leq
  c \, e^{-\beta m},\]
  with the same $\beta$ as in the coupling estimates from the previous section.
Here $\|\cdot \|$ denotes  variation distance, but since
measures for fixed $k$ are absolutely continuous with respect to
an appropriate Wiener measure, we can also consider
it as
 an $L^1$-metric on the density with respect
to Wiener measure.  Hence, there
exists a limit which we denote by $\pi_k \nu$ which is also
absolutely continuous with respect to Wiener measure.  The same
coupling argument shows that for any $\og \in \state$ and
$n \geq 2k$,
\[    \|\pi_k \mu_n(\og) - \pi_k \nu \| \leq c \, e^{-\beta n}. \]
Using this we can see that the $\{\pi_k \nu\}$ satisfy the
appropriate consistency condition so we can combine them to give
the measure $\nu$.

There is a minor technical detail to show that
the paths under measure $\nu$ have finite time duration.
Let $T_k(\og)$ denote the sum of the time durations of $\gamma^1$
and $\gamma^2$ between the times of the first visit to $\p \ball_{-k}$
to the first visit to $\p \ball_{1-k}$.  Using standard estimates
for Brownian motion, one can easily show that there exist
$c,\alpha$ such that
\[      \nu\left\{\og: T_1(\og) \geq r   \right\}
          \leq c \, e^{-\alpha r}. \]
Using this, Brownian scaling, and \eqref{oct14.1} below we see that
there exists $c'$ such that for all $r > 0$,
\[      \nu\left\{\og: T_k(\og) \geq r \, e^{-2k} \right\}
          \leq c' \, e^{-\alpha r}. \]
Using a Borel-Cantelli argument, we can see that this implies
that
\[ \nu
\left\{\og: \sum_{k=1}^\infty T_k(\og) = \infty \right\} = 0 .
\] This completes the proof of Theorem \ref{main1}.

If $Y$ is a function on $\state$, we write $\nu[Y] = \int Y
d\nu$. We omit the easy proof of the next proposition which
gives some properties of the measure $\nu$.

\begin{proposition}
 For all $n > 0$,
\[    \mu_n[\nu] = \nu, \]
\[    \nu[q_n] = e^{-n \xi},\;\;\;\;
   \nu[Q_n] = 1. \]
\[   \nu[\sep] \geq \rho_1,\]
\begin{equation}  \label{oct14.1}
    \frac{d\pi_n \nu}{d \nu}
             (\og) = Q_n(\og).
\end{equation}
\end{proposition}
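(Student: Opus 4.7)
The plan is to derive each of the four identities from the convergence $\pi_k \mu_n(\og) \to \pi_k \nu$ established in Theorem~\ref{main1}, combined with the strong Markov structure of the maps $\mu_n$ and the estimates on $q_n, Q_n$ from Section~\ref{sec:coupling}. I will treat the claims in order.

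For the invariance $\mu_n[\nu] = \nu$, the key tool is the semigroup identity
\[
\mu_{m+n}(\og) \;=\; \int \mu_n(\og^*) \, d\mu_m(\og)(\og^*),
\]
which I will establish by the strong Markov property of Brownian motion together with the decomposition $A_{m+n}(\og) = A_m(\og) \cap A_{m,m+n}(\og)$ (after scaling the second event becomes $A_n(\og_m)$). Then I project onto $\pi_k$ and let $m \to \infty$: the LHS tends to $\pi_k \nu$ by Theorem~\ref{main1}, while the RHS converges to $\pi_k \mu_n[\nu]$ by applying the variation-distance convergence from Theorem~\ref{main1} (at level $k+n$, choosing $m \geq 2(k+n)$) to the bounded measure-valued map $\og^* \mapsto \pi_k \mu_n(\og^*)$.

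For the moment formulas, a direct computation from the definition of $\mu_N$ gives $\mu_N(\og)[q_n] = q_{N+n}(\og)/q_N(\og)$, which by Proposition~\ref{prop.Q} tends to $e^{-n\xi}$. To transfer this limit to $\nu[q_n]$, I use that $q_n$ is bounded by $c_* e^{-n\xi}$ and, on $\good_k$, is well-approximated by a function of $\pi_k \og$ with multiplicative error $O(e^{-k/2})$ by Lemma~\ref{q estimate}; Lemma~\ref{gen estimates} controls the $\mu_N$-mass of the complement. Combined with $\|\pi_k \mu_N(\og) - \pi_k \nu\| \leq c e^{-\beta N}$ this yields $\mu_N(\og)[q_n] \to \nu[q_n]$, hence $\nu[q_n] = e^{-n\xi}$ and immediately $\nu[Q_n] = 1$.

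For $\nu[\sep] \geq \rho_1$, the event $\{\og \in \sep\}$ is $\pi_{1/2}$-measurable, so by $\|\pi_{1/2}\mu_n(\og) - \pi_{1/2}\nu\| \to 0$ it suffices to invoke Lemma~\ref{sep1}, which gives $\mu_n(\og)[\sep] \geq \rho_1$ for every $\og$ and $n \geq 1$. Finally, for the Radon-Nikodym identity $d\pi_n \nu / d\nu = Q_n$, the plan is to use the invariance to decompose $\og \sim \nu$ into its scaled inner part (which is again $\nu$-distributed) and its outer Brownian continuation (conditioned on non-intersection), so that the measure $\pi_n \nu$ admits a density against $\nu$ equal to the rescaled non-intersection weight $Q_n(\og) = e^{n\xi} q_n(\og)$; consistency with the normalization $\nu[Q_n] = 1$ fixes the proportionality. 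The main obstacle is this last identification, which requires careful bookkeeping of the scaling that identifies the space on which $\pi_n \nu$ naturally lives with $\state$; the other three items follow from the standard limit arguments on the coupling and variation bounds already established.
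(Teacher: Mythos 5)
The paper itself omits the proof of this proposition (it is declared ``easy''), so there is no argument of the authors to compare with; judged on its own, your proposal is fine for the second and third items but has a genuine gap in the first and fourth. The claimed semigroup identity $\mu_{m+n}(\og)=\int\mu_n(\og^*)\,d\mu_m(\og)(\og^*)$ is false: conditioning on $A_{m+n}(\og)$ does not leave the intermediate configuration $\og_m$ with law $\mu_m(\og)$; it biases it by the probability of future non-intersection. The correct identity is $\mu_{m+n}(\og)=\int\mu_n(\og^*)\,d\mu_{m,m+n}(\og)(\og^*)$, where $\mu_{m,m+n}(\og)$ is exactly the measure introduced in Section \ref{sec:coupling}, whose density with respect to $\mu_m(\og)$ is proportional to $q_n(\og^*)$. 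Passing to the limit in the corrected identity (using $q_{m+n}(\og)/q_m(\og)\to e^{-n\xi}$ from Proposition \ref{prop.Q}) gives $\nu=\int\mu_n(\og^*)\,Q_n(\og^*)\,d\nu(\og^*)$; that is, $\mu_n[\nu]=\nu$ must be read as quasi-stationarity --- sample $\og$ from $\nu$, attach Brownian motions, condition \emph{globally} on $A_n(\og)$ --- and not as the unweighted average $\int\mu_n(\og^*)\,d\nu(\og^*)$ that your argument targets, which does not in general reproduce $\nu$ (it is the analogue of averaging the one-step conditional laws of a subMarkov chain against its quasi-stationary distribution, which fails unless the one-step survival probability is constant). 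A secondary issue: even with the correct identity, $\pi_k\mu_n(\og^*)$ is not a function of finitely many projections of $\og^*$, since $A_n(\og^*)$ sees the whole path, so the passage to the limit requires the $\good_k$ approximation of Lemmas \ref{gen estimates} and \ref{q estimate}, which you invoke for the moments but not here.

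The missing weight is precisely the content of \eqref{oct14.1}, which you leave as an unproved plan resting on the flawed first item. Disintegrating $\mu_N(\og)$ over $\F_{N-n}$ shows that $\pi_n\og_N$ is exactly the attached Brownian pair between $\p\ball_{N-n}$ and $\p\ball_N$, rescaled, while $\og_{N-n}$ appears with law $\mu_{N-n}(\og)$ reweighted by $q_n(\og_{N-n})\,q_{N-n}(\og)/q_N(\og)$; letting $N\to\infty$ exhibits $\pi_n\nu$ as: sample $\og'$ from $Q_n\,d\nu$, then attach the conditioned, rescaled outer extension. This is the precise sense of the Radon--Nikodym statement, and it settles the space-identification issue that you flag but do not resolve. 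By contrast, your computation $\mu_N(\og)[q_n]=q_{N+n}(\og)/q_N(\og)\to e^{-n\xi}$, the transfer to $\nu[q_n]$ via \eqref{oct11.1.alt}, \eqref{oct11.1} and Lemma \ref{q estimate}, and the argument for $\nu[\sep]\ge\rho_1$ from Lemma \ref{sep1} and the $\pi_{1/2}$-measurability of $\sep$ are correct.
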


Let us define the measure $\overline \nu$ by
\[    \frac{d\overline \nu}{d\nu}(\og) = Q(\og). \]

\begin{remark}  We have defined analogues of measures that are sometimes
called quasi-invariant measures for subMarkov chains.

\begin{comment}
As a simple analogue, suppose that $X_n$ is
  an aperiodic, irreducible Markov chain on a finite state
space that we write as  $A \cup\{z\}$.  Let
 $T$ denote the first visit to
$z$ and let $P$ denote  the subMarkov transition matrix
on $A$ with entries
\[           p(x,y) = \Prob\{X_1 = y \vert
   X_0 = x \}. \]
 Let $p_n(x,y)$ denote entries of $P^n$.
Then the analogue of $e^{-\xi}$ is the largest eigenvalue
of $P$  and the analogues of $Q,\nu,\overline \nu$ in this case
are given by
\[    Q(x) = \lim_{n \rightarrow \infty} e^{\xi n}
  \Prob\{T > n \vert X_0 = x \},\]
\[    \nu(y) = \lim_{n \rightarrow \infty}
   \Prob\{X_n = y \vert X_0 = x, T > n \}, \]
\[    \overline
 \nu(z) = \lim_{n \rightarrow \infty}
          \Prob\{X_{n}= y \vert X_0 = x , T > 2n\}. \]
\end{comment}

 \end{remark}

\section{Future directions}  \label{futsec}

We plan on extending these coupling results to more general
intersection exponents. Briefly, let $W^1_t, ...,W^{m+n}_t$ be
independent $3$-dimensional Brownian motions, started uniformly
on $\p \ball.$ As before, for $1\le j\le m+n$, let
$T^j_k=\inf\{t:W^j_t \in \p \ball_k\}$ and let
\[
\Gamma^1_k=W^1[0,T^1_k]\cup \cdots \cup W^m[0,T^m_k]\, ,
\hspace{.5in} \Gamma^2_k=W^{m+1}[0,T^{m+1}_k]\cup \cdots \cup
W^{m+n}[0,T^{m+n}_k] .
\]
Then the intersection exponent $\xi(m,n)$ is defined as
\[
\Prob\{\Gamma^1_k \cap \Gamma^2_k = \emptyset  \} \approx
e^{-\xi(m,n)k}.
\]
Note that $\xi=\xi(1,1)$ and that $\xi(m,n)$ measures the
probability that a set of $m$ independent paths avoids a set of
$n$ independent paths. These exponents can be extended in a
natural way for all $\lambda \ge 0$ to $\xi(k,\lambda).$ They
were first introduced in \cite{LW} and their existence follows,
as before, from a subadditivity argument.

While in $2$ dimensions all these exponents have been computed
(see \citealp{LSWinter} and \citealp{LSWinter2}), not much is
known of their $3$-dimensional counterparts. The only known
values are $\xi(k,0)=0$ and $\xi(2,1)=\xi(1,2)=1$. Looking at
$\xi(k,\lambda)$ as functions of $\lambda$, it was proved in
\cite{StrictConc} that they are strictly concave. One question
of interest is whether these functions are also analytic. In
\cite{LSWanal}, an exponential coupling of weighted Brownian
paths was used to prove that $2$-dimensional intersection
exponents are analytic. While the coupling from \cite{LSWanal}
relies on conformal invariance of planar Brownian motion and
cannot be generalized to three dimensions, we believe that our
coupling argument carries over from $\xi(1,1)$ to
$\xi(k,\lambda)$, hence providing a fast convergence to an
invariant measure in the general case. This in turn should be
sufficient to prove analyticity of $3$-dimensional exponents.

A long range goal is to give an effective way to study the
multifractal nature of the Brownian path.

\section{\texorpdfstring{Simulations for $\xi$}{Simulations for xi}}\label{sec:sim}

The value of the intersection exponent $\xi$ is not known, and
it is possible that it will never be known exactly. However, one
can do simulations, and we report the results of our recent
trials. In \cite{BL}, it was proved that Brownian exponents and
simple random walk exponents are the same. That is to say, if
$S^1$ and $S^2$ are simple random walks started at the origin,
then
\[   \Prob\{S^1(0,n] \cap S^2(0,n] = \emptyset \}
  \approx n^{-\zeta}, \]
where $\zeta = \xi/2$.  It is believed that this probability
is asymptotic to $c n^{-\zeta}$ for some $c$, and this is what
we assume here.

Therefore, as in \cite{BLP}, we do simulations of the random
walk exponent.   Suppose we run $M$ pairs of independent simple
random walks, started at the origin. If $M(n)$ denotes the
number of (pairs of) paths that have no intersections in the
time interval $(0,n]$,  then the probability of no intersection
by time $n$ is estimated by $M(n)/M$. Let
\[k(n)=\frac{ \log M - \log M(n)}{\log n}.\]
This quantity should converge to $\zeta$ as $n\to \infty$.

We ran one million pairs of $3$-dimensional random walks of
length $100,000,$ started at the origin. We use the same number
of walks as in \cite{BLP}, but our walks are much longer. Our
simulation results are included in Table 1. Our
 simulations suggest $\xi= 2 \zeta$
is around $.57$, which is consistent with simulations in
\cite{BLP}.

Similar to the simulation analysis in \cite{BLP}, one can
estimate $\zeta$ using the sequence
\[h(n)=\frac{\log M(n)-\log M(n+m)}{\log (m+n)-\log n},\]
which should also converge to $\zeta$ as $n\to \infty$. Let
$m=10,000$. We observe that our simulations lead to more
variation in the value of $h(n)$ than in the value of $k(n)$, as
it can be seen in Table 1, but again suggests $\xi$ is around
$.57$.

\begin{table}[h!]
  \begin{center}
    \begin{tabular}{ c c c c}
     $n$ & $M(n)$ & k(n) & h(n)\\
    \hline
    10,000 &  74,629 & 0.2818 &0.2874\\
    20,000 &  61,151 & 0.2822 &0.2948
\\
    30,000 & 54,262 & 0.2827  &0.2857
\\
    40,000 &  49,981 & 0.2827 &0.2838
\\
    50,000 &  46,914 & 0.2828 &0.2953
\\
    60,000 &  44,455 & 0.2830 &0.2895
\\
    70,000 &  42,515 & 0.2831 &0.2787
\\
    80,000 &  40,962 & 0.2830 &0.2822
\\
    90,000 &  39,623 & 0.2830 &0.2746
\\
    100,000 &  38,493 & 0.2829 & -- \\
   \end{tabular}
  \end{center}
   \caption{Simulations using $1,000,000$ pairs of $100,000$ step walks.}
   \end{table}

%\bibliographystyle{alea3}
%\bibliography{Invariant}

\end{document}